\renewcommand{\P}{\mathbb{P}}
\newcommand{\charfun}[1]{\mathbbm{1}_{#1}}
\newcommand{\set}[1]{{#1}}
\newcommand{\Pcond}[2]{\mathbb{P}\left(\left.#1\;\right|\;#2\right)}
\newcommand{\coll}[1]{\mathcal{#1}}
\newcommand{\uarg}{\,\cdot\,}
\newcommand{\ud}{\mathrm{d}}
\newcommand{\defeq}{\mathrel{\mathop:}=} 
\newcommand{\R}{\mathbb{R}}
\newcommand{\Econd}[2]{\mathbb{E}\left[\left.#1\;\right|\;#2\right]}
\newcommand{\Eop}[1]{\mathbb{E}\left[#1\right]}
\newcommand{\E}{\mathbb{E}}
\newcommand{\fixprop}{q_{\text{fix}}}
\newcommand{\Cov}{\mathop{\mathrm{Cov}}}
\newcommand{\F}{\coll{F}}
\newcommand{\G}{\coll{G}}
\newtheorem{theorem}{Theorem}
\newtheorem{corollary}[theorem]{Corollary}
\newtheorem{lemma}[theorem]{Lemma}
\theoremstyle{definition}
\newtheorem{assumption}[theorem]{Assumption}
\theoremstyle{remark}
\newtheorem{remark}[theorem]{Remark}
\date{\today}
\begin{document}

\title[Can the Adaptive Metropolis Collapse]%
{Can the Adaptive Metropolis Algorithm Collapse Without the Covariance Lower
  Bound?}
\author{Matti Vihola}
\address{Matti Vihola, Department of Mathematics and Statistics,
  University of Jyväskylä,
  P.O.Box 35 (MaD),
  FI-40014 University of Jyväskylä,
  Finland}
\email{matti.vihola@iki.fi}
\urladdr{http://iki.fi/mvihola/}
\thanks{The author was supported 
  by the Academy of Finland, projects no.~110599 and 201392,
  by the Finnish Academy of Science and Letters, Vilho,
  Yrjö and Kalle Väisälä Foundation,
  by the Finnish Centre of Excellence in Analysis and
  Dynamics Research,
  and by the Finnish Graduate School in
  Stochastics and Statistics.}
\subjclass[2000]{Primary
  65C40; 
  Secondary
  60J27, 
  93E15, 
  93E35
}
\keywords{
  Adaptive Markov chain Monte Carlo, 
  Metropolis algorithm, stability, stochastic approximation.
}


\begin{abstract} 
   The Adaptive Metropolis (AM) algorithm is based on
   the symmetric random-walk Metropolis algorithm. The proposal 
   distribution has the following time-dependent
   covariance matrix at step $n+1$
   \[
       S_n = \Cov(X_1,\ldots,X_n) + \epsilon I,
   \]
   that is, the sample covariance matrix of the history of the
   chain plus a (small) constant $\epsilon>0$ multiple of the identity matrix
   $I$. The lower bound on the eigenvalues of $S_n$
   induced by the factor $\epsilon I$ is theoretically convenient,
   but practically cumbersome, as a good value for the parameter $\epsilon$
   may not always be easy to choose.  This article considers variants of the AM
   algorithm that do not explicitly bound the eigenvalues of $S_n$ away
   from zero. 
   The behaviour of $S_n$ is studied in detail,
   indicating that the eigenvalues of $S_n$ do not tend 
   to collapse to zero in general. 
   In dimension one, it is shown that $S_n$ is bounded away from zero 
   if the logarithmic target density is uniformly continuous.
   For a modification of the AM algorithm including an additional 
   fixed component in the
   proposal distribution, the eigenvalues of $S_n$ are shown to stay away
   from zero with a practically non-restrictive condition. This result implies a strong 
   law of large numbers for super-exponentially decaying target distributions with regular
   contours.
\end{abstract} 

\maketitle

\section{Introduction} 
\label{sec:intro} 

Adaptive Markov chain Monte Carlo (MCMC) methods have attracted increasing
interest in the last few years, after the original work of Haario, Saksman,
and Tamminen \cite{saksman-am}
and the subsequent advances in the field
\cite{andrieu-robert,atchade-rosenthal,andrieu-moulines,roberts-rosenthal};
see also the recent review \cite{andrieu-thoms}. Several adaptive MCMC algorithms 
have been proposed up to date, but the 
seminal Adaptive Metropolis (AM) algorithm \cite{saksman-am} is still one of
the most applied methods, perhaps due to its simplicity and
generality. 

The AM algorithm is a symmetric random-walk Metropolis algorithm, with
an adaptive proposal distribution. The algorithm starts\footnote{
  The initial `burn-in' phase included in the original algorithm is not
  considered here.}
at some point $X_1\equiv
x_1\in\R^d$ with an initial positive definite covariance matrix
$S_1\equiv s_1\in\R^{d\times d}$
and follows the recursion
\begin{enumerate}[(S1)]
    \item 
      \label{item:proposal}
      Let $Y_{n+1} = X_n + \theta S_n^{1/2} W_{n+1}$, where $W_{n+1}$ is an
      independent standard Gaussian random vector
      and $\theta>0$ is a constant.
    \item 
      \label{item:accept-reject}
      Accept $Y_{n+1}$ with probability
      $\min\big\{1,\frac{\pi(Y_{n+1})}{\pi(X_n)}\big\}$ and let $X_{n+1} = Y_{n+1}$;
      otherwise reject $Y_{n+1}$ and let $X_{n+1} = X_n$.
    \item 
      \label{item:adapt}
      Set $S_{n+1} = \Gamma(X_1,\ldots,X_{n+1})$.
\end{enumerate}
In the original work \cite{saksman-am}
the covariance parameter is computed by
\begin{equation}
    \Gamma(X_1,\ldots,X_{n+1}) =
    \frac{1}{n} \sum_{k=1}^{n+1} (X_k - \overline{X}_{n+1})
    (X_k - \overline{X}_{n+1})^T + \epsilon I,
    \label{eq:orig-am-crec}
\end{equation}
where $\overline{X}_n \defeq n^{-1}\sum_{k=1}^n X_k$ stands for the mean.
That is, $S_{n+1}$ is a covariance estimate of the
history of the `Metropolis chain' $X_1,\ldots,X_{n+1}$ plus a small
$\epsilon>0$ multiple of the identity matrix $I\in\R^{d\times d}$. 
The authors prove a strong law of
large numbers (SLLN) for the algorithm, that is, $n^{-1} \sum_{k=1}^n
f(X_k) \to \int_{\R^d} f(x) \pi(x) \ud x$ almost surely as $n\to\infty$ for
any bounded functional $f$
when the target distribution $\pi$ is bounded and compactly supported.
Recently, SLLN was shown to hold also for $\pi$ with unbounded support,
having super-exponentially decaying tails with regular contours and $f$
growing at most exponentially in the tails \cite{saksman-vihola}.

This article considers the original AM algorithm
(S\ref{item:proposal})--(S\ref{item:adapt}), without the lower bound
induced by the factor
$\epsilon I$. The proposal covariance 
function $\Gamma$, defined
precisely in Section \ref{sec:notations},
is a consistent covariance estimator first proposed in \cite{andrieu-robert}.
A special case of this estimator behaves asymptotically like the sample
covariance in \eqref{eq:orig-am-crec}.
Previous results indicate that
if this algorithm is modified by truncating the eigenvalues of $S_n$ 
within explicit lower and upper bounds,
the algorithm can be verified in a fairly general setting 
\cite{roberts-rosenthal,atchade-fort}. 
It is also possible to determine an increasing sequence of truncation sets
for $S_n$,
and modify the algorithm to include a re-projection scheme in order to verify the 
validity of the algorithm \cite{andrieu-moulines}.

While technically convenient, such pre-defined bounds 
on the adapted covariance matrix $S_n$ are inconvenient in practice.
Ill-defined values can affect the efficiency of the
adaptive scheme dramatically, rendering the algorithm useless in the
worst case. In particular,
if the factor $\epsilon>0$ in the AM algorithm is selected too
large, the smallest eigenvalue of the true covariance matrix of $\pi$ may
be well smaller than $\epsilon>0$, and the chain $X_n$ is likely to mix poorly.
Even though the re-projection scheme of \cite{andrieu-moulines} avoids
such behaviour by increasing truncation sets, which eventually contain
the desirable values of the adaptation parameter, the practical
efficiency of the algorithm is still strongly affected by the choice of
these sets \cite{andrieu-thoms}.

After defining precisely the algorithms in Section \ref{sec:notations},
the above mentioned unconstrained
AM algorithm is analysed in
Section \ref{sec:am}.
First, it is studied how
the AM algorithm run on an improper uniform target $\pi\equiv c>0$ behaves.
It is also shown that in a one-dimensional setting and with a uniformly continuous 
$\log\pi$, the variance parameter $S_n$ is bounded away from zero.
This fact is shown to imply, with the results in \cite{saksman-vihola}, 
a SLLN in the particular case of a Laplace target distribution.
While this result
has little practical value in its own right, it is the
first case where the unconstrained AM algorithm is shown to preserve
the correct ergodic properties. It shows that the algorithm possesses self-stabilising properties and further
strengthens the belief that the algorithm would be stable and ergodic under a
more general setting. The results of Section \ref{sec:am} 
also give some insight to the behaviour
of the adaptive chain  that can be helpful when the algorithm is applied
in practice.

Section \ref{sec:fcam} considers a
slightly different variant of the AM algorithm, due to Roberts and Rosenthal
\cite{roberts-rosenthal-examples}, replacing (S\ref{item:proposal}) 
with
\begin{enumerate}
    \item[(S1')]
      \label{item:proposal-mix}
      With probability $\beta$, let $Y_{n+1} = X_n + V_{n+1}$
      where $V_{n+1}$ is an independent sample of $\fixprop$; otherwise, let
      $Y_{n+1} = X_n + \theta S_n^{1/2} W_{n+1}$ as in (S\ref{item:proposal}).
\end{enumerate}
While omitting the parameter $\epsilon>0$,
the proposal strategy (S\ref{item:proposal-mix}') includes two additional
parameters: the mixing probability $\beta\in(0,1)$ and the fixed symmetric proposal
distribution $\fixprop$.
It has the advantage that 
the `worst case scenario' having ill-defined $\fixprop$ only `wastes' the
fixed proportion $\beta$ of samples, while $S_n$ can take any
positive definite value on adaptation.
This approach is analysed also in the 
recent preprint \cite{bai-roberts-rosenthal}, relying on a technical
assumption that ultimately implies that $X_n$ is bounded in probability. 
In particular, the authors show that if $\fixprop$ is a uniform density
on a ball having a large enough radius, then the algorithm is ergodic.
Section \ref{sec:fcam} uses a perhaps more transparent argument to
show that the proposal strategy
(S\ref{item:proposal-mix}') with a mild additional 
condition implies a sequence $S_n$ with eigenvalues bounded away from zero.
This fact implies a SLLN using the technique of \cite{saksman-vihola}, as
shown in the end of Section \ref{sec:fcam}. 


\section{The General Algorithm} 
\label{sec:notations} 

Let us define a Markov chain  $(X_n,M_n,S_n)_{n\ge 1}$ 
evolving in space $\R^d\times\R^d\times\mathcal{C}^d$ with the state space
$\R^d$ and
$\mathcal{C}^d \subset \R^{d\times d}$ 
standing for the positive definite matrices.
The chain starts at an initial position $X_1\equiv x_1\in\R^d$, with an initial
mean\footnote{A customary choice is to set $m_1 = x_1$.}
$M_1\equiv m_1\in\R^d$
and an initial covariance matrix
$S_1\equiv s_1\in\mathcal{C}^d$.
For $n\ge 1$, the chain is defined through the recursion
\begin{eqnarray}
    X_{n+1} & \sim &  P_{q_{S_{n}}}(X_n, \uarg) 
    \label{eq:x-rec} \\
    M_{n+1} & \defeq & (1-\eta_{n+1}) M_n + \eta_{n+1} X_{n+1} \label{eq:m-rec} \\
    S_{n+1} & \defeq & (1-\eta_{n+1}) S_n 
    + \eta_{n+1} (X_{n+1}-M_n)(X_{n+1}-M_n)^T.
    \label{eq:c-rec}
\end{eqnarray}
Denoting the natural filtration of the chain as
$\F_n \defeq \sigma(X_k,M_k,S_k:1\le k\le n)$, the notation in
\eqref{eq:x-rec} reads that $\Pcond{X_{n+1}\in
  A}{\F_n} = P_{q_{S_{n}}}(X_n, A)$ for any measurable $A\subset\R^d$.
The Metropolis transition kernel $P_q$ is defined
for any symmetric probability 
density $q(x,y) = q(x-y)$ through
\begin{multline*}
P_{q}(x,\set{A}) \defeq 
\charfun{\set{A}}(x) \left[ 1-
\int \min\left\{1,\frac{\pi(y)}{\pi(x)}\right\} q(y-x) \ud y\right]
\\ + \int_{\set{A}} 
  \min\left\{1,\frac{\pi(y)}{\pi(x)}\right\} q(y-x) \ud y
\end{multline*}
where $\charfun{A}$ stands for the characteristic function of the set $A$.
The proposal densities $\{q_s\}_{s\in\mathcal{C}^d}$ are defined as a mixture
\begin{equation}
    q_s(z) \defeq (1-\beta)\tilde{q}_s(z) + \beta \fixprop(z)
    \label{eq:mix-proposal}
\end{equation}
where the mixing constant $\beta\in[0,1)$ determines the portion how often a fixed
proposal density $\fixprop$ is used instead of the adaptive proposal 
$    \tilde{q}_s(z) \defeq \det(\theta s)^{-1/2}
\tilde{q}(\theta^{-1/2}s^{-1/2} z)$
with $\tilde{q}$ being a `template' probability density. 
Finally, the adaptation weights 
$(\eta_n)_{n\ge 2} \subset (0,1)$ appearing in \eqref{eq:m-rec} and
\eqref{eq:c-rec} is assumed to decay to zero.

One can verify that for $\beta=0$ this setting corresponds to the algorithm 
(S\ref{item:proposal})--(S\ref{item:adapt}) 
of Section
\ref{sec:intro} with $W_{n+1}$ having distribution $\tilde{q}$, and
for $\beta\in(0,1)$, (S\ref{item:proposal-mix}') applies instead of
(S\ref{item:proposal}).
Notice also that the original AM algorithm 
essentially fits this setting, with $\eta_n \defeq n^{-1}$, 
$\beta\defeq 0$ and if $\tilde{q}_s$ is defined slightly differently, 
being a Gaussian density with mean zero and covariance $s+\epsilon I$.
Moreover, if one sets $\beta=1$, the above setting 
reduces to a non-adaptive symmetric random walk Metropolis algorithm with
the increment proposal distribution $\fixprop$.

\section{The Unconstrained AM Algorithm} 
\label{sec:am} 

\subsection{Overview of the Results}
\label{sec:am-overview} 

This section deals with the unconstrained AM algorithm, that is, 
the algorithm described in Section \ref{sec:notations} with the mixing constant
$\beta = 0$ in \eqref{eq:mix-proposal}. 
Sections \ref{sec:expectation} and \ref{sec:path}
consider the case of an improper uniform target distribution $\pi \equiv c$ for some 
constant $c>0$. This implies that (almost) every proposed sample is accepted
and the recursion \eqref{eq:x-rec} reduces to 
\begin{equation}
    X_{n+1} = X_{n} + \theta S_n^{1/2} W_{n+1}
    \label{eq:adaptive-random-walk}
\end{equation}
where $(W_n)_{n\ge 2}$ are independent realisations of the distribution $\tilde{q}$.

Throughout this subsection, let us assume that the template proposal
distribution $\tilde{q}$ is spherically
symmetric and the weight sequence is defined as $\eta_n \defeq c n^{-\gamma}$ for
some constants $c\in(0,1]$ and $\gamma\in(1/2,1]$.
The first result characterises the expected behaviour of $S_n$
when $(X_n)_{n\ge 2}$ follows \eqref{eq:adaptive-random-walk}.
\begin{theorem} \label{thm:uncontrolled-asymptotics-simple} 
    Suppose $(X_n)_{n\ge 2}$ follows 
    the `adaptive random walk' recursion \eqref{eq:adaptive-random-walk},
    with $\E W_n W_{n}^T = I$.
    Then, for all $\lambda>1$ there is $n_0\ge m$ such that for
    all $n\ge n_0$ and $k\ge 1$, the following bounds hold
    \begin{equation*}
        \frac{1}{\lambda} \left(\theta\sum_{j=n+1}^{n+k} \sqrt{\eta_j}\right)
        \le \log\left(\frac{\Eop{S_{n+k}}}{\Eop{S_n}}\right)
        \le \lambda \left(\theta\sum_{j=n+1}^{n+k} \sqrt{\eta_j}\right).
    \end{equation*}
\end{theorem}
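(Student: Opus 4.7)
The plan is to reduce the theorem to a spectral analysis of a two-dimensional linear recursion for suitable scalar expectations. Let $D_n \defeq X_n - M_n$; combining \eqref{eq:m-rec} with \eqref{eq:adaptive-random-walk} gives $D_{n+1} = (1-\eta_{n+1})(D_n + \theta S_n^{1/2}W_{n+1})$. Fix a unit vector $u\in\R^d$ and set $a_n \defeq \E[(u^T D_n)^2]$ and $b_n \defeq \E[u^T S_n u]$. Conditioning on $\F_n$ and using $\E W_{n+1}=0$ together with $\E W_{n+1}W_{n+1}^T=I$ (from spherical symmetry of $\tilde q$), a short calculation yields
\begin{equation*}
\begin{pmatrix} a_{n+1} \\ b_{n+1} \end{pmatrix}
= M_{\eta_{n+1}}\begin{pmatrix} a_n \\ b_n \end{pmatrix},
\qquad
M_\eta \defeq \begin{pmatrix} (1-\eta)^2 & \theta^2(1-\eta)^2 \\ \eta & 1-(1-\theta^2)\eta \end{pmatrix}.
\end{equation*}

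Next I would analyse the spectrum of $M_\eta$. A direct computation gives $\det M_\eta = (1-\eta)^3$ and $\operatorname{tr} M_\eta = 2 - (3-\theta^2)\eta + \eta^2$, and Taylor-expanding the quadratic formula in $\eta$ yields $\mu_\pm(\eta) = 1 \pm \theta\sqrt{\eta} + O(\eta)$ as $\eta\to 0$. Consequently $\log\mu_+(\eta_j) = \theta\sqrt{\eta_j}(1+o(1))$, so for every $\lambda>1$ there is $n_0$ with $\lambda^{-1/2}\theta\sqrt{\eta_j}\le \log\mu_+(\eta_j)\le \lambda^{1/2}\theta\sqrt{\eta_j}$ for $j\ge n_0$. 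The left eigenvector associated with $\mu_+(\eta)$ is, to leading order, $\ell(\eta)\propto(1,\theta/\sqrt{\eta})$, which motivates the Lyapunov-type quantity $c_n \defeq a_n + (\theta/\sqrt{\eta_n})\,b_n$. Using the left-eigenvector identity, a direct computation gives $c_{n+1} = \mu_+(\eta_{n+1})\,c_n\,(1+\epsilon_n)$, the error $\epsilon_n$ stemming from the discrepancy $1/\sqrt{\eta_{n+1}}-1/\sqrt{\eta_n} = O(n^{\gamma/2-1})$ and satisfying $|\epsilon_n| = O(1/n)$ for $\eta_n = c n^{-\gamma}$.

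Iterating, $c_{n+k} = c_n\prod_{j=n+1}^{n+k}\mu_+(\eta_j)\prod_{j=n+1}^{n+k}(1+\epsilon_j)$; since $\sum|\epsilon_j|$ is $o\bigl(\sum\sqrt{\eta_j}\bigr)$ uniformly in $k\ge 1$ as $n\to\infty$ (a standard calculation for $\gamma\in(1/2,1]$), the error product is absorbed into the slack $\lambda$. It remains to translate the estimate from $c_n$ to $b_n$. The upper bound $b_n \le (\sqrt{\eta_n}/\theta)\,c_n$ is automatic. The matching lower bound follows from the attracting property of the Perron direction: all entries of $M_\eta$ are strictly positive for $\eta\in(0,1)$, so by Perron--Frobenius the ratio $a_n/b_n$ approaches $\theta/\sqrt{\eta_n}$, making $c_n$ and $(\theta/\sqrt{\eta_n})\,b_n$ comparable up to a multiplicative constant and contributing only $O(\log n)$ to the logarithm. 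Since $u\in\R^d$ was arbitrary, the two-sided bound passes to $\E S_n$.

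The main obstacle I anticipate is the lower bound, where one must prevent the growing Perron component of $(a_n,b_n)$ from being suppressed as $\eta_j$ drifts with $j$. The critical input is the strict positivity of every entry of $M_\eta$ for $\eta\in(0,1)$: Perron--Frobenius then places $v_+(\eta)$ strictly inside the positive cone and $v_-(\eta)$ outside it, so every non-negative iterate has a projection onto $v_+(\eta_{n+1})$ bounded away from zero uniformly in $n$, stabilising the mode decomposition as $\eta$ slowly varies.
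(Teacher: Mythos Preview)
Your spectral set-up is correct and is a genuinely different packaging from the paper's. The $2\times 2$ recursion, the eigenvalue expansion $\mu_\pm(\eta)=1\pm\theta\sqrt{\eta}+O(\eta)$, and the Lyapunov combination $c_n=a_n+(\theta/\sqrt{\eta_n})b_n$ all check out. A direct computation even shows $|\epsilon_n|=O(\eta_n)$ \emph{uniformly} in the ratio $a_n/b_n\ge 0$ (your stated bound $O(1/n)$ holds only for $\gamma=1$, but the weaker $O(\eta_n)$ still gives $\sum_{j>n}|\epsilon_j|=o(\sum_{j>n}\sqrt{\eta_j})$, which is all you use).

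The genuine gap is the last step, passing from $c_n$ back to $b_n$. Saying that $c_n$ and $(\theta/\sqrt{\eta_n})\,b_n$ are ``comparable up to a multiplicative constant'' is not enough, because the theorem must hold for every $k\ge 1$. For $k=1$ the claim reads $\log(b_{n+1}/b_n)=\theta\sqrt{\eta_{n+1}}(1+o(1))$; if the ratio $r_n\defeq c_n\sqrt{\eta_n}/(\theta b_n)$ merely stays in a fixed interval $[C^{-1},C]$, then $\log(b_{n+1}/b_n)$ differs from $\log(c_{n+1}/c_n)$ by a term $\log(r_n/r_{n+1})$ of order one that need not vanish, and the $k=1$ bound fails. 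What you actually need is that $r_n$ \emph{converges} (equivalently $a_n\sqrt{\eta_n}/b_n\to\theta$). Perron--Frobenius does give contraction of the positive cone, but the one-step contraction factor is only $\mu_-/\mu_+=1-2\theta\sqrt{\eta_n}+O(\eta_n)$ while the Perron direction itself drifts by a comparable amount; extracting convergence from this balance is real work, not a consequence of strict positivity alone.

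The paper sidesteps this by a different route: it eliminates $a_n$, obtaining a second-order scalar recursion for $b_n$, and then studies the normalised increment $g_n\defeq\eta_n^{-1/2}(b_n-b_{n-1})/b_{n-1}$ directly. Lemma~\ref{lemma:g-convergence} proves $g_n\to\theta$ via a contraction-mapping argument with slowly moving fixed points (summability of $|x_{n+1}^*-x_n^*|$ is the key technical estimate). That convergence \emph{is} the $k=1$ statement, and telescoping gives all $k$. This is exactly the fact you need in your coordinates, namely convergence of $(a_n,b_n)$ to the Perron direction of the time-varying system, but established explicitly rather than asserted. Your matrix framework is cleaner conceptually; to close it you would have to prove the analogue of Lemma~\ref{lemma:g-convergence} for the ratio $a_n\sqrt{\eta_n}/b_n$, which is essentially the same amount of work.
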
 
\begin{proof}
Theorem \ref{thm:uncontrolled-asymptotics-simple} is a special case
of Theorem \ref{thm:uncontrolled-asymptotics} in Section \ref{sec:expectation}.
\end{proof}

\begin{remark}
Theorem \ref{thm:uncontrolled-asymptotics-simple} implies that
with the choice $\eta_n \defeq cn^{-\gamma}$
for some $c\in(0,1)$ and $\gamma\in(1/2,1]$,
the expectation grows with the speed
\[
    \Eop{S_n} \simeq
    \exp\left(\frac{\theta\sqrt{c}}{1-\frac{\gamma}{2}}n^{1-\frac{\gamma}{2}}\right).
\]
\end{remark}

\begin{remark} 
In the original setting \citep{saksman-am} the weights are defined as
$\eta_n\defeq n^{-1}$ and 
Theorem~\ref{thm:uncontrolled-asymptotics-simple}
implies that  the asymptotic growth rate of $\E[S_n]$ is
$e^{2\theta\sqrt{n}}$ when $(X_n)_{n\ge 2}$ follows
\eqref{eq:adaptive-random-walk}.
Suppose the value of $S_n$ is very small compared to the scale of a smooth target
distribution $\pi$. Then, it is expected that most of the proposal are
accepted, $X_n$ behaves almost as \eqref{eq:adaptive-random-walk}, and
$S_n$ is expected to grow approximately at the rate $e^{2\theta\sqrt{n}}$ until it 
reaches the correct magnitude. On the other
hand, simple deterministic bound implies that $S_n$ can decay slowly, only 
with the polynomial speed $n^{-1}$. Therefore, it may be safer to
choose the initial $s_1$ small.
\end{remark} 

\begin{remark} 
The selection of the scaling parameter $\theta>0$ in the AM algorithm 
does not seem to affect the expected asymptotic behaviour $S_n$ 
dramatically.
However, the choice $0<\theta\ll 1$ can  result
in an significant initial `dip' of the adapted covariance values,
as exemplified in Figure \ref{fig:dip}.
\begin{figure}
    \psfragscanon
    \psfrag{S}{$\Eop{S_n}$}
    \psfrag{n}{$n$}
    \includegraphics{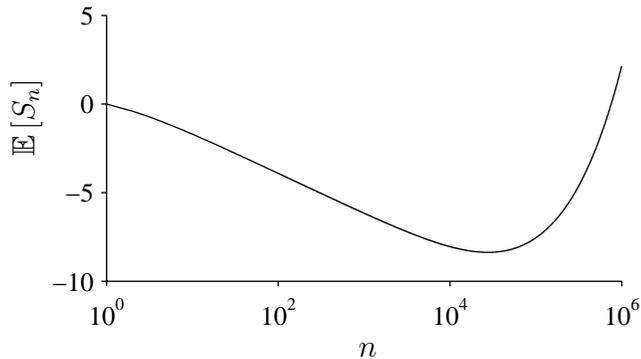}
    \caption{An example of the exact development of $\Eop{S_n}$, when $s_1 = 1$
     and $\theta=0.01$. The sequence $(\Eop{S_n})_{n\ge 1}$ decreases until $n$ is over
     $27,000$ and exceeds the initial value only with $n$ over $750,000$.}
    \label{fig:dip}
\end{figure}
Therefore, the
values $\theta\ll 1$ are to be used with care. In this case, the significance of a
successful burn-in is also emphasised.
\end{remark} 

It may seem that Theorem
\ref{thm:uncontrolled-asymptotics-simple} would automatically also ensure that
$S_n\to\infty$ also path-wise. This
is not, however, the case. For example, consider the probability
space $[0,1]$ with the Borel $\sigma$-algebra and the Lebesgue measure.
Then $(M_n,\F_n)_{n\ge 1}$ defined
as $M_n \defeq 2^{2n} \charfun{[0,2^{-n})}$ and 
$\F_n \defeq \sigma(X_k:1\le k\le n)$ is, in fact, a submartingale. Moreover,
$\E{M_n} = 2^n \to \infty$, but $M_n\to 0$ almost surely.

The AM process, however, does produce an unbounded sequence $S_n$.
\begin{theorem} 
    \label{thm:unif-path-simple} 
    Assume that $(X_n)_{n\ge 2}$ follows the `adaptive random walk'
    recursion \eqref{eq:adaptive-random-walk}.
    Then, for any unit vector $u\in\R^d$, the process $u^T S_n u \to \infty$
    almost surely. 
\end{theorem}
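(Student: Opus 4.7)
The plan is to reduce to the one-dimensional case using the spherical symmetry of $\tilde q$, and then to prove the almost-sure divergence of $s_n := u^T S_n u$ by analysing $\log s_n$ as a submartingale-like process driven by a suitable positive-drift sequence.

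First I would observe that, since $\tilde q$ is spherically symmetric, for any fixed unit vector $u\in\R^d$ the conditional distribution of $u^T S_n^{1/2} W_{n+1}$ given $\F_n$ equals that of $\sqrt{s_n}\,\tilde W_{n+1}$, where $\tilde W_{n+1}$ is distributed as a one-dimensional marginal of $\tilde q$ (a symmetric, unit-variance scalar random variable). Consequently the triple $(x_n,m_n,s_n) := (u^T X_n, u^T M_n, u^T S_n u)$ satisfies the one-dimensional analogue of the same adaptive random walk recursion, so it suffices to prove $s_n\to\infty$ almost surely when $d=1$.

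In one dimension, set $y_n := x_n - m_n$ and $r_n := y_n/\sqrt{s_n}$ (strict positivity of $s_n$ is automatic from the deterministic lower bound $s_n\ge s_1\prod_{k=2}^n(1-\eta_k)>0$). The covariance update becomes
\[
\frac{s_{n+1}}{s_n} = (1-\eta_{n+1}) + \eta_{n+1}\bigl(r_n+\theta\tilde W_{n+1}\bigr)^2,
\]
and concavity of the logarithm applied to this convex combination yields the key pathwise bound
\[
\log s_{n+1} - \log s_n \;\ge\; 2\eta_{n+1}\log\bigl|r_n+\theta\tilde W_{n+1}\bigr|.
\]
Writing $h(r) := \E\log|r+\theta\tilde W|$, I would decompose the right-hand side into a predictable drift $2\eta_{n+1}h(r_n)$ and a martingale difference $2\eta_{n+1}\zeta_{n+1}$. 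Standard arguments show $h$ is continuous, bounded below, and satisfies $h(r)\to\infty$ as $|r|\to\infty$, using integrability of $\log|\tilde W|$ (which follows, under mild regularity of $\tilde q$, from $\E\tilde W^2 = 1$). Since $\gamma>1/2$ implies $\sum\eta_n^2<\infty$, an $L^2$ martingale argument gives a.s.\ convergence of $\sum\eta_k\zeta_k$, and the theorem reduces to
\[
\sum_{k}\eta_{k+1}\,h(r_k) = +\infty\quad\text{almost surely.}
\]

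The hard part is precisely this last divergence, because $h$ may be negative in a neighbourhood of the origin (for instance when $\theta$ is small). My plan is to argue by contradiction: if $\P(\sup_n s_n<\infty)>0$, then on the corresponding event the increments $\theta\sqrt{s_n}\tilde W_{n+1}$ of $x_n$ retain nontrivial variance, and the random walk $x_n$ continues to diffuse while $y_n^2$ accumulates. Combining this with the identity
\[
r_n^2 = \frac{(1-\eta_n)^2}{\eta_n}\Bigl(1-(1-\eta_n)\frac{s_{n-1}}{s_n}\Bigr),
\]
which is obtained from $y_n = (1-\eta_n)(x_n - m_{n-1})$ and the recursion for $s_n$, I expect $r_n^2$ to grow without bound on this event, forcing $h(r_n)\to+\infty$ and producing the required contradiction. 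Converting this heuristic — essentially promoting the expectation-level statement of Theorem~\ref{thm:uncontrolled-asymptotics-simple} to a pathwise lower bound on the diffusive growth of $y_n^2$ on the contradicting event — is the principal technical obstacle.
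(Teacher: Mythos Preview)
Your reduction to the scalar process and the concavity bound
\[
\log(s_{n+1}/s_n)\ge 2\eta_{n+1}\log\lvert r_n+\theta\tilde W_{n+1}\rvert
\]
are both correct, and the $L^2$-martingale part goes through provided $\tilde q$ has bounded density (so that $\operatorname{Var}\bigl(\log\lvert r+\theta\tilde W\rvert\bigr)$ is uniformly bounded in $r$). The gap is exactly where you say it is, and your proposed contradiction argument does not close it.

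First, $\sup_n s_n<\infty$ does not imply $\inf_n s_n>0$: your deterministic lower bound $s_n\ge s_1\prod_k(1-\eta_k)$ tends to zero because $\sum_k\eta_k=\infty$, so on the contradicting event the increments of $x_n$ need not ``retain nontrivial variance''. Second, the identity you derive for $r_n^2$ gives no growth: it says only
\[
r_n^2=\frac{(1-\eta_n)^2}{\eta_n}\Bigl(1-(1-\eta_n)\frac{s_{n-1}}{s_n}\Bigr)\le\frac{(1-\eta_n)^2}{\eta_n},
\]
and $r_n^2$ stays small whenever $s_n/s_{n-1}$ stays near $1-\eta_n$. Nothing in your argument forces $r_n$ (equivalently $Z_{n+1}=r_n+\theta\tilde W_{n+1}$) away from the region where $h<0$.

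The paper supplies precisely this missing anti-concentration. Writing $Z_{n+1}=U_nZ_n+\theta\tilde W_{n+1}$ with $U_n=(1-\eta_n)[1+\eta_n(Z_n^2-1)]^{-1/2}$, one has $U_n\approx 1$ when $\eta_n$ and $\eta_nZ_n^2$ are small, so over a block of $k$ steps $Z_n$ is well approximated by a genuine random walk $\tilde Z$. The Kolmogorov--Rogozin inequality then bounds the fraction of time $\tilde Z$ spends in any fixed interval $[-N,N]$ by $O(k^{-1/2})$. Choosing $N$ and $k$ appropriately, this yields a uniform block drift estimate
\[
\P\Bigl(\sum_{j=1}^k\log\bigl[1+\eta_{n+j}(Z_{n+j}^2-1)\bigr]\ge kC\eta_n\;\Big|\;\F_n\Bigr)\ge 1-\epsilon
\]
for all large $n$, after which an $L^2$-martingale argument on blocks (essentially the one you set up) finishes the proof. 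The same Kolmogorov--Rogozin step would also complete your version, since it controls the occupation time of $Z_n$ in the set where $\log\lvert Z_n\rvert$ is negative; what is missing from your proposal is this anti-concentration idea and the block structure needed to exploit it.
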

\begin{proof}
Theorem \ref{thm:unif-path-simple} is a special case of Theorem
\ref{thm:unif-path} in Section \ref{sec:path}.
\end{proof}

In a one-dimensional setting, and when $\log\pi$ is uniformly continuous, 
the AM process can be approximated with the `adaptive random walk' above,
whenever $S_n$ is small enough. This yields
\begin{theorem} 
    \label{thm:one-d-bound-simple} 
    Assume $d=1$ and $\log\pi$ is uniformly continuous.
    Then, there is 
    a constant $b>0$ such that $\liminf_{n\to\infty} S_n \ge b$.
\end{theorem}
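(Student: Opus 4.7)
I would argue by contradiction, coupling the AM chain with the adaptive random walk \eqref{eq:adaptive-random-walk} (whose scalar variance diverges by Theorem~\ref{thm:unif-path-simple}), and using the following consequence of uniform continuity. Writing $\omega$ for a modulus of continuity of $\log\pi$, every Metropolis proposal $y=x+\theta^{1/2}s^{1/2}w$ is accepted with probability at least $e^{-\omega(\theta^{1/2}s^{1/2}|w|)}$, which approaches $1$ uniformly in $x$ whenever $s$ is small and $|w|$ is bounded. Thus whenever $S_n$ is small, the Metropolis step essentially agrees with the plain random-walk step.

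Suppose for contradiction that $\P(\liminf_n S_n<b)>0$ for every $b>0$, and fix $b>0$ small. On the bad event there are infinitely many stopping times $\tau_j$ with $S_{\tau_j}<b/2$. At each $\tau_j$ I would introduce a coupled copy of the adaptive random walk $(\tilde X_k,\tilde M_k,\tilde S_k)_{k\ge\tau_j}$ driven by the same innovations $(W_{k+1},U_{k+1})$ and initialised from $(X_{\tau_j},M_{\tau_j},S_{\tau_j})$; the two trajectories coincide up to the first Metropolis rejection. Using the deterministic bound $(X_n-M_n)^2\le S_n/\eta_n$ from \eqref{eq:c-rec} and the scale invariance $(X-M,S)\mapsto(c(X-M),c^2S)$ of the random-walk dynamics, I would restrict further to a sub-event on which the reduced initial state $(X_{\tau_j}-M_{\tau_j},S_{\tau_j})$ lies in a compact set.

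By Theorem~\ref{thm:unif-path-simple} applied conditionally on $\F_{\tau_j}$, $\tilde S_k\to\infty$ a.s., so the hitting time $\tilde\sigma_j=\inf\{k>\tau_j:\tilde S_k\ge b\}$ is a.s.\ finite; a compactness / quantitative-continuity argument over the reduced initial state produces deterministic $N$ and $K$ with
\[
\Pcond{\tilde\sigma_j\le\tau_j+N \text{ and } \max_{\tau_j<k\le\tau_j+N}|W_k|\le K}{\F_{\tau_j}}\ge \tfrac34
\]
uniformly on the restricted event. On that event the conditional probability that every Metropolis proposal in $(\tau_j,\tilde\sigma_j]$ is accepted is at least $\exp(-N\omega(\theta^{1/2}b^{1/2}K))\ge 1/2$, provided $b$ is chosen small enough relative to $N,K$. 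This yields a uniform $p_0>0$ with $\Pcond{\sup_{\tau_j<k\le\tau_j+N}S_k\ge b}{\F_{\tau_j}}\ge p_0$. A conditional Borel--Cantelli argument then forces $S_n\ge b$ infinitely often, and combined with the deterministic decay bound $S_{n+1}\ge(1-\eta_{n+1})S_n$ (so that any down-crossing from $\{S\ge b\}$ to $\{S<b/2\}$ needs at least $\sim\eta_n^{-1}\log 2$ steps) this contradicts the assumed $S_n<b/2$ infinitely often.

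\textbf{Main obstacle.} The hardest step will be extracting a \emph{quantitative}, uniform-in-initial-condition hitting-time estimate for the adaptive random walk from the purely almost-sure Theorem~\ref{thm:unif-path-simple}. The scale invariance together with the deterministic bound $(X-M)^2\le S/\eta$ localises the relevant initial state in a compact set, but since this set depends on the slowly growing $\eta_n^{-1}$, extracting a single deterministic window $N$ requires a careful tightness argument for the reduced $(X_n-M_n,S_n)$ process at its renormalised scale; a secondary difficulty is the oscillation / down-crossing count used to close the contradiction.
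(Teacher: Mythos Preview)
Your plan has a real gap at the step you yourself flag as the main obstacle, and a second one at the closing contradiction.

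\textbf{No fixed window $N$ exists.} The adaptive random walk is time-inhomogeneous through the weights $\eta_n\to 0$. From \eqref{eq:z-char-s} one has $\log S_{n+1}-\log S_n=\log[1+\eta_{n+1}(Z_{n+1}^2-1)]$, so on the event $\{|W_k|\le K\}$ the per-step increment of $\log\tilde S$ is $O(\eta_{\tau_j})$. Even if you let $Z$ grow like a random walk over the window (so $Z_{\tau_j+m}^2\sim m$), you still need $\sum_{m\le N}\eta_{\tau_j+m}\,m\gtrsim \log 2$, i.e.\ $N\gtrsim \eta_{\tau_j}^{-1/2}$. Scale invariance normalises the \emph{state} but cannot normalise the \emph{clock}: the law of $(\tilde S_{\tau_j+\cdot}/\tilde S_{\tau_j})$ depends on $(\eta_{\tau_j+\cdot})$, not only on the reduced initial datum. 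So a single deterministic $N$ valid for all $\tau_j$ does not come from compactness of initial conditions; what is needed is a time-dependent window $N_j\to\infty$. But then your coupling bound $\exp(-N_j\,\omega(\theta^{1/2}b^{1/2}K))$ tends to $0$ unless you also let $b\to 0$, which destroys the scheme.

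\textbf{The contradiction does not close.} Even granting a uniform $p_0$, the conditional Borel--Cantelli argument yields only $\{S_n\ge b\}$ infinitely often. This is perfectly compatible with $\{S_n<b/2\}$ infinitely often; the slow down-crossing observation $S_{n+1}\ge(1-\eta_{n+1})S_n$ only says each down-crossing takes many steps, not that it cannot happen infinitely often.

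\textbf{What the paper does instead.} The paper avoids hitting times altogether. It fixes a block length $k$ (from Lemma~\ref{lemma:drift-bound}) and shows that whenever $S_{\ell_i}<\mu/2$ the block increment $T_i$ of $\log S$ satisfies $\E[T_i\mid\G_{i-1}]\ge k\eta_{\ell_{i-1}}$; the coupling with the adaptive random walk is used \emph{only over one block of length $k$}, where it succeeds with probability $\ge 1-k\delta$ for $\delta=\delta(\mu)$ chosen once. The centred sums $\sum(T_i-\E[T_i\mid\G_{i-1}])$ form an $L^2$ martingale with $\sum\eta_{\ell_i}^2<\infty$, hence converge, while the drift $\sum k\eta_{\ell_{i-1}}=\infty$ forces $\log S$ back up. This block-martingale structure is exactly what absorbs the time-inhomogeneity that breaks your fixed-$N$ hitting-time argument.
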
 
\begin{proof}
Theorem \ref{thm:one-d-bound-simple} is a special case of Theorem
\ref{thm:unif-path} in Section \ref{sec:example}.
\end{proof}

Finally, having Theorem \ref{thm:one-d-bound-simple}, it is possible to
establish 
\begin{theorem} 
    \label{thm:laplace-ergodicity-simple} 
    Assume $\tilde{q}$ is Gaussian, the one-dimensional
    target distribution is standard Laplace $\pi(x) \defeq \frac{1}{2}
    e^{-|x|}$ and the functional $f:\R\to\R$ satisfies
    $\sup_x e^{-\gamma|x|}|f(x)|<\infty$ for some $\gamma\in(0,1/2)$.
    Then, $n^{-1} \sum_{k=1}^n f(X_k) \to \int f(x) \pi(x) \ud x$
    almost surely as $n\to\infty$.
\end{theorem}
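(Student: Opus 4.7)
The strategy is to reduce the claim to the abstract SLLN for adaptive MCMC established in \cite{saksman-vihola}. That machinery asks for two things: (i) the adaptation parameter $S_n$ eventually lies in a compact subset of $\mathcal{C}^1 = (0,\infty)$, and (ii) the family of Metropolis kernels corresponding to parameters in that compact set is simultaneously geometrically ergodic with a common Lyapunov function $V$ dominating $|f|$. For the Laplace target, the natural choice is $V(x) = e^{\gamma'|x|}$ with $\gamma < \gamma' < 1/2$, so that $|f| \le C V$ and $V\pi$, $V^{1+\delta}\pi$ remain integrable for some small $\delta>0$.

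First step: apply Theorem~\ref{thm:one-d-bound-simple}. Since $\log\pi(x) = -|x|-\log 2$ is $1$-Lipschitz, hence uniformly continuous on $\R$, the hypothesis is satisfied and yields a (random or deterministic, as given by that theorem) constant $b>0$ with $\liminf_n S_n \ge b$ almost surely.

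Second step: establish $\limsup_n S_n < \infty$ almost surely. The idea is a drift argument on $\log S_n$ when $S_n$ is large. Because $\pi$ has exponential tails, a proposal $Y_{n+1} = X_n + \theta S_n^{1/2} W_{n+1}$ with large $S_n$ has acceptance probability which is easily bounded above by something of order $\E[\exp(-\theta S_n^{1/2}|W_{n+1}|)\wedge 1]$ (up to a factor depending on $|X_n|$), and this tends to zero as $S_n\to\infty$. Conditional on rejection, $X_{n+1}=X_n$ and the recursion \eqref{eq:c-rec} gives $S_{n+1} = (1-\eta_{n+1})S_n + \eta_{n+1}(X_n-M_n)^2$, so $S_n$ decreases unless $(X_n-M_n)^2$ is comparable to $S_n$. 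Combining this with a companion control on $M_n$ (for example, showing $|X_n-M_n|^2/S_n$ stays stochastically bounded, using the Laplace integrability of $X_n$ once $S_n$ is large and acceptance is rare) gives a supermartingale-type inequality for $\log S_n$ above a large threshold, from which $\limsup_n S_n < \infty$ follows by the standard Robbins--Siegmund argument.

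Third step: once $S_n$ is, almost surely and eventually, confined to $[b,B]\subset(0,\infty)$, the family $\{P_{q_s}: s\in[b,B]\}$ of Gaussian random-walk Metropolis kernels targeting the Laplace density is uniformly $V$-geometrically ergodic, with a common small set and a common drift inequality for $V(x)=e^{\gamma'|x|}$, $\gamma'<1$; this is standard, following the classical random-walk-Metropolis analysis (the geometric drift requires exactly $\gamma'<1$ because $\pi$ is exactly exponential, which is where the restriction $\gamma<1/2$ in the statement gives the room needed to absorb a $V^{1+\delta}$ factor). With (i) and (ii) in hand, the SLLN of \cite{saksman-vihola} applies and yields the desired convergence.

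The main obstacle is the upper bound on $S_n$ in step two. Because the Laplace distribution is only exponentially (not super-exponentially) decaying, the stability statements proved in \cite{saksman-vihola} for super-exponential targets with regular contours do not apply off the shelf, and one must use the precise exponential structure quantitatively: the gain from the rejection-driven drift has to dominate the potential growth mechanism already identified in Theorem~\ref{thm:uncontrolled-asymptotics-simple}. Making this comparison quantitative, uniformly in $X_n$ and $M_n$, is the delicate part of the argument; everything else is either a direct appeal to Theorem~\ref{thm:one-d-bound-simple} or to the existing adaptive MCMC toolbox.
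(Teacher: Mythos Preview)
Your first step matches the paper exactly. The gap is step two: you aim to prove $\limsup_n S_n<\infty$ almost surely, but the argument you sketch does not work as stated. The acceptance-probability bound is not uniform in $X_n$: for the Laplace target, any proposal moving toward the origin is accepted with probability one, so the acceptance rate does not tend to zero as $S_n\to\infty$ without a simultaneous control on $|X_n|$. And the ``companion control'' on $(X_n-M_n)^2/S_n$ that you invoke is precisely the joint stability statement that is difficult to establish directly---this is exactly the problem the machinery of \cite{saksman-vihola} is designed to bypass. So the main obstacle you flag is not merely delicate; it is unaddressed.

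The paper avoids it entirely by \emph{not} trapping $S_n$ in a compact set. Instead it proves (Lemma~\ref{lemma:exp-geombound}) that the drift and minorisation constants for $P_{\tilde q_s}$ degrade only polynomially in $s$, namely $(1-\lambda_s)^{-1}\vee\delta_s^{-1}\le c\,s^\gamma$ for all $s\ge L$. Combined with the lower bound from Theorem~\ref{thm:one-d-bound}, \cite[Proposition~10 and Lemma~15]{saksman-vihola} then give that, on $\{\inf_n S_n\ge\kappa\}$, one has $\max\{S_n,|M_n|\}\le A n^\beta$ with high probability for arbitrarily small $\beta>0$. A truncated auxiliary process confined to these polynomially growing sets is constructed, \cite[Theorem~2]{saksman-vihola} delivers the SLLN for it, and the truncated process coincides with the original except on a set of probability at most $2\epsilon$. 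The key idea you are missing is that the SLLN framework tolerates polynomial growth of $S_n$ once the ergodicity constants degrade only polynomially; a hard upper bound on $S_n$ is never needed.
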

\begin{proof}
    Theorem \ref{thm:laplace-ergodicity-simple} is a special case of
    Theorem \ref{thm:laplace-ergodicity} in Section \ref{sec:example}.
\end{proof}
\begin{remark} 
    In the case
    $\eta_n \defeq n^{-1}$, Theorem \ref{thm:laplace-ergodicity-simple} implies
    that the parameters
    $M_n$ and $S_n$ of the adaptive chain converge to $0$ and $2$, that is,
    the true mean and variance of the target distribution $\pi$,
    respectively.
\end{remark}

\begin{remark} 
Theorem \ref{thm:one-d-bound-simple} (and Theorem
\ref{thm:laplace-ergodicity-simple}) could probably be extended to cover
also targets $\pi$ with compact supports. Such an extension would, however,
require specific handling of the boundary effects, which can lead to
technicalities.
\end{remark}

\subsection{Uniform Target: Expected Growth Rate} 
\label{sec:expectation} 

Define the
following matrix quantities
\begin{eqnarray}
    a_{n} &\defeq& \Eop{(X_{n}-M_{n-1})(X_{n}-M_{n-1})^T} \\
    b_{n} &\defeq& \Eop{S_{n}}
\end{eqnarray}
for $n\ge 1$, with the convention that $a_1 \equiv 0\in\R^{d\times d}$.
One may write
using \eqref{eq:m-rec} and \eqref{eq:adaptive-random-walk}
\[
    X_{n+1} - M_n = X_n - M_n + \theta S_n^{1/2} W_{n+1}
    = (1-\eta_n)(X_n - M_{n-1})+ \theta S_n^{1/2} W_{n+1}.
\]
If $\E W_n W_n^T = I$, one may easily compute
\begin{equation*}
    \begin{split}
    \E\big[&(X_{n+1}-M_{n})(X_{n+1}-M_{n})^T\big] \\
    &=\left(1-\eta_{n}\right)^2 \Eop{(X_n-M_{n-1})(X_n-M_{n-1})^T}
      +\theta^2\Eop{S_n }
\end{split}
\end{equation*}
since $W_{n+1}$ is independent of $\F_n$ and zero-mean due to the symmetry
of $\tilde{q}$. 
The values of $(a_n)_{n\ge 2}$ and $(b_n)_{n\ge 2}$ are therefore
determined by the joint recursion
\begin{eqnarray}
    a_{n+1} &=& (1-\eta_{n})^2 a_n + \theta^2 b_n
    \label{eq:a-recursion} \\
    b_{n+1} &=& (1-\eta_{n+1}) b_n + \eta_{n+1} a_{n+1}.
    \label{eq:b-recursion}
\end{eqnarray}
Observe that for any constant unit vector $u\in\R^d$, the recursions 
\eqref{eq:a-recursion} and \eqref{eq:b-recursion} hold also for
\begin{eqnarray*}
    a_{n+1}^{(u)} &\defeq& \Eop{u^T(X_{n+1}-M_n)(X_{n+1}-M_n)^T u} \\
    b_{n+1}^{(u)} &\defeq& \Eop{u^T S_{n+1} u}.
\end{eqnarray*}
The rest of this section therefore dedicates to the analysis if the one-dimensional 
recursions \eqref{eq:a-recursion} and \eqref{eq:b-recursion}, that is,
$a_n,b_n\in\R_+$ for all $n\ge 1$. The first result shows that the tail
of $(b_n)_{n\ge 1}$ is increasing.
\begin{lemma} \label{lemma:tail-increasing} 
    Let $n_0\ge 1$ and suppose $a_{n_0}\ge 0$, $b_{n_0}> 0$ and
    for $n\ge n_0$ the sequences $a_n$ and $b_n$ follow the recursions
    \eqref{eq:a-recursion} and \eqref{eq:b-recursion}, respectively.
    Then, there is a $m_0\ge n_0$ such that $(b_n)_{n\ge m_0}$ 
    is strictly increasing.
\end{lemma}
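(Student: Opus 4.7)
My approach hinges on the observation that \eqref{eq:b-recursion} rewrites as
\[
  b_{n+1} - b_n = \eta_{n+1}\bigl(a_{n+1} - b_n\bigr),
\]
so the claim reduces to showing $a_{n+1} > b_n$ for all $n$ beyond some threshold. Introducing $r_n \defeq a_n/b_n$ and $u_n \defeq a_{n+1}/b_n = (1-\eta_n)^2 r_n + \theta^2$, the goal becomes $u_n > 1$ eventually. I would also record the identity $r_{n+1} = u_n/\bigl(1+\eta_{n+1}(u_n-1)\bigr)$, obtained by dividing \eqref{eq:b-recursion} by $b_n$.

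The first step is an \emph{absorbing} property for the region $\{u_n>1\}$: if $u_N>1$ and $\eta_n$ is small for $n\ge N$, then $u_n>1$ for all $n\ge N$. Indeed, $u_N>1$ together with $\eta_{N+1}<1$ forces $r_{N+1}>1$ via the identity above, so $u_{N+1} > (1-\eta_{N+1})^2 + \theta^2$, which exceeds $1$ provided $\eta_{N+1} < 1 - \sqrt{\max\{0,\,1-\theta^2\}}$. Since $\eta_n\to 0$ this smallness holds past some index, and the absorbing property then follows by induction.

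The main obstacle is exhibiting such an $N$. For $\theta^2 \ge 1$ this is immediate because $u_n \ge \theta^2 \ge 1$, and the strict inequality is secured once $a_n > 0$, which occurs by step $n_0+1$ since $a_{n_0+1} \ge \theta^2 b_{n_0} > 0$. For $\theta^2 < 1$ I would argue by contradiction: suppose $u_n \le 1$ for every $n\ge N_1$. On the one hand $u_n\le 1$ gives $r_n \le (1-\theta^2)/(1-\eta_n)^2$, so $(r_n)$ is uniformly bounded since $\eta_n \to 0$; on the other hand the identity for $r_{n+1}$ combined with $u_n\le 1$ yields $r_{n+1} \ge u_n$, whence
\[
  r_{n+1} - r_n \;\ge\; u_n - r_n \;=\; \theta^2 - \eta_n(2-\eta_n)\,r_n.
\]
With $(r_n)$ bounded and $\eta_n\to 0$, the right-hand side is at least $\theta^2/2$ for all $n$ sufficiently large, which forces $r_n\to\infty$ and contradicts the boundedness. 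This contradiction supplies the required index $N$, and the lemma follows by combining with the absorbing property above.
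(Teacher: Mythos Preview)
Your proof is correct. Both your argument and the paper's share the same skeleton: split into $\theta\ge 1$ versus $\theta<1$, establish an absorption property (once $b_{n+1}>b_n$ occurs past a threshold it persists), and for $\theta<1$ derive a contradiction from the hypothesis that $b_n$ is eventually non-increasing. The bookkeeping differs. The paper eliminates $a_n$ to obtain a second-order recursion in $b_n$ alone,
\[
b_{n+1}-b_n=\frac{\eta_{n+1}}{\eta_n}(1-\eta_n)^3(b_n-b_{n-1})+\eta_{n+1}\bigl[(1-\eta_n)^2-1+\theta^2\bigr]b_n,
\]
reads off absorption from the sign of the bracket, and for the contradiction shows $a_n$ grows geometrically at rate $(1-\eta_n)^2/(1-\theta^2)$ while $b_n$ decreases, forcing $a_{n+1}>b_n$. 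You instead track the ratios $r_n=a_n/b_n$ and $u_n=a_{n+1}/b_n$, obtain absorption from the identity $r_{n+1}=u_n/\bigl(1+\eta_{n+1}(u_n-1)\bigr)$, and get the contradiction from the increment bound $r_{n+1}-r_n\ge \theta^2-\eta_n(2-\eta_n)r_n$ together with boundedness of $r_n$. These are equivalent packagings of the same mechanism; your ratio formulation is a bit more self-contained, while the paper's second-order recursion is what gets reused in the subsequent growth-rate analysis (Theorem~\ref{thm:uncontrolled-asymptotics}).
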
 
\begin{proof} 
    If $\theta\ge 1$, we may estimate
    $a_{n+1} \ge (1-\eta_n)^2 a_n + b_n$ implying 
    $b_{n+1} \ge b_n + \eta_{n+1}(1-\eta_{n})^2 a_n$
    for all $n\ge n_0$.
    Since $b_n>0$ by construction, and therefore also
    $a_{n+1} \ge \theta^2 b_n>0$, we have that $b_{n+1} > b_n$ for 
    all $n\ge n_0+1$.
    
    Suppose then $\theta<1$.
    Solving $a_{n+1}$ from \eqref{eq:b-recursion} yields
\begin{equation*}
    a_{n+1} = \eta_{n+1}^{-1} \left(b_{n+1}-b_n\right) + b_n
\end{equation*}
Substituting this into \eqref{eq:a-recursion}, we obtain for $n\ge n_0+1$
\begin{equation*}
  \eta_{n+1}^{-1} \left(b_{n+1}-b_n\right) + b_n
  = (1-\eta_{n})^2 \left[
  \eta_{n}^{-1} \left(b_{n}-b_{n-1}\right) + b_{n-1}
  \right] + \theta^2 b_n
\end{equation*}
After some algebraic manipulation, this is equivalent to
\begin{equation}
    b_{n+1} - b_n
    = \frac{\eta_{n+1}}{\eta_n}(1-\eta_n)^3(b_n-b_{n-1}) +
  \eta_{n+1} \left[(1-\eta_n)^2 -1 + \theta^2 \right] b_n.
  \label{eq:b-alone-rec}
\end{equation}
Now, since $\eta_n\to 0$, we have that $(1-\eta_n)^2 -1 + \theta^2 >0$
whenever $n$ is greater than some $n_1$. So, if we
have for some $n'>n_1$ that $b_{n'}-b_{n'-1}\ge 0$, the sequence
$(b_n)_{n\ge n'}$ is strictly increasing after $n'$.

Suppose conversely that $b_{n+1}-b_{n}<0$ for all $n\ge n_1$. From
\eqref{eq:b-recursion}, $b_{n+1} - b_n = \eta_{n+1}(a_{n+1}-b_n)$
and hence $b_n>a_{n+1}$ for $n\ge n_1$. Consequently, 
from \eqref{eq:a-recursion}, $a_{n+1} > (1-\eta_n)^2 a_n + \theta^2
a_{n+1}$, which is equivalent to
\begin{equation*}
    a_{n+1} 
    > \frac{(1-\eta_n)^2}{1-\theta^2} a_n.
\end{equation*}
Since $\eta_n\to 0$, there is a $\mu>1$ and $n_2$ such that $a_{n+1} \ge \mu a_n$
for all $n\ge n_2$. That is, $(a_n)_{n\ge n_2}$ grows at least
geometrically, implying that after some time $a_{n+1}>b_n$, which is a
contradiction. To conclude, there is an $m_0\ge n_0$ such that
$(b_n)_{n\ge m_0}$ is strictly increasing.
\end{proof} 
Lemma \ref{lemma:tail-increasing} shows that the expectation $\Eop{u^T S_n
u}$ is ultimately bounded from below, assuming only that $\eta_n \to 0$.
By additional assumptions on the sequence $\eta_n$,
the growth rate can be characterised in terms of the adaptation weight sequence.
\begin{assumption}
    \label{a:adapt-weight} 
    Suppose $(\eta_n)_{n\ge 1}\subset(0,1)$ and
    there is $m'\ge 2$ such that 
    \begin{enumerate}[(i)]
    \item \label{item:adapt1} $(\eta_n)_{n\ge m'}$ is decreasing with
      $\eta_n\to 0$,
    \item \label{item:adapt3} $(\eta_{n+1}^{-1/2}-\eta_n^{-1/2})_{n\ge m'}$
      is decreasing and
    \item \label{item:adapt4} $\sum_{n=2}^\infty \eta_n = \infty$.
    \end{enumerate}
\end{assumption}
The canonical example of a sequence satisfying Assumption
\ref{a:adapt-weight} is the one assumed in Section
\ref{sec:am-overview}, $\eta_n \defeq cn^{-\gamma}$ for $c\in(0,1)$ and
$\gamma\in(1/2,1]$.
\begin{theorem} \label{thm:uncontrolled-asymptotics} 
    Suppose $a_{m}\ge 0$ and $b_{m}>
    0$ for some $m\ge 1$, and for $n> m$ the $a_n$ and $b_n$ are given
    recursively by \eqref{eq:a-recursion} and \eqref{eq:b-recursion},
    respectively. Suppose also that the sequence $(\eta_n)_{n\ge 2}$
    satisfies Assumption \ref{a:adapt-weight} with some $m'\ge m$.
    Then, for all $\lambda>1$ there is $m_2\ge m'$ such that for
    all $n\ge m_2$ and $k\ge 1$, the following bounds hold
    \begin{equation*}
        \frac{1}{\lambda} \left(\theta\sum_{j=n+1}^{n+k} \sqrt{\eta_j}\right)
        \le \log\left(\frac{b_{n+k}}{b_n}\right)
        \le \lambda \left(\theta\sum_{j=n+1}^{n+k} \sqrt{\eta_j}\right).
    \end{equation*}
\end{theorem}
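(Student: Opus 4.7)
The plan is to reduce the coupled recursion \eqref{eq:a-recursion}--\eqref{eq:b-recursion} to a one-dimensional recursion for a suitably normalised ratio, show this ratio converges to $\theta$, and then telescope the resulting incremental estimate. Concretely, set $\rho_n \defeq a_{n+1}/b_n$ and $\sigma_n \defeq \sqrt{\eta_n}\,\rho_n$. By Lemma~\ref{lemma:tail-increasing} one may restrict to a tail on which $b_n>0$ is strictly increasing, so that these quantities are well-defined. Substituting $a_{n+1}=\rho_n b_n$ into \eqref{eq:a-recursion} and using \eqref{eq:b-recursion} yields
\[
\rho_{n+1}=\frac{(1-\eta_{n+1})^2\rho_n}{1+\eta_{n+1}(\rho_n-1)}+\theta^2 .
\]
Granted $\sigma_n\to\theta$, the theorem closes by observing that
\[
\log\frac{b_{n+1}}{b_n} \;=\; \log\bigl(1+\eta_{n+1}(\rho_n-1)\bigr) \;=\; \sigma_n\sqrt{\eta_{n+1}}\,(1+o(1)),
\]
where I use $\log(1+x)=x+O(x^2)$ and $\sqrt{\eta_n/\eta_{n+1}}\to 1$; the latter follows from Assumption~\ref{a:adapt-weight}(ii)--(iii), because the decreasing sequence $\eta_{n+1}^{-1/2}-\eta_n^{-1/2}$ must tend to $0$ (otherwise $\sum\eta_n<\infty$). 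Summing from $n+1$ to $n+k$ then produces the two-sided bound for any prescribed $\lambda>1$ once $n$ is sufficiently large.

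The real content is the convergence $\sigma_n\to\theta$. A short algebraic expansion of the $\rho$-recursion, after multiplying by $\sqrt{\eta_{n+1}}$ and grouping, gives
\[
\sigma_{n+1}-\sigma_n \;=\; \sqrt{\eta_{n+1}}\bigl(\theta^2-\sigma_n^2\bigr)+R_n ,
\]
where the remainder is $R_n = O(\eta_{n+1}) = o(\sqrt{\eta_{n+1}})$ uniformly on compact subsets of $(0,\infty)$, thanks to $\eta_n\to 0$, $\eta_{n+1}/\eta_n\to 1$, and $\eta_{n+1}^{-1/2}-\eta_n^{-1/2}\to 0$. This is a Robbins--Monro-type scheme with step sizes $\sqrt{\eta_{n+1}}$ and mean drift $\theta^2-\sigma^2$, whose limiting ODE $\dot\sigma=\theta^2-\sigma^2$ has an asymptotically stable equilibrium at $\sigma=\theta$.

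To promote this heuristic to a rigorous conclusion, I would proceed in two steps. First, trap $\sigma_n$ in an invariant region $[\theta-\delta,\theta+\delta]$: on the event $\sigma_n>\theta+\delta$ the drift is at most $-2\theta\delta$, so $\sigma_{n+1}\le \sigma_n-c\delta\sqrt{\eta_{n+1}}+|R_n|$ forcing a return, and symmetrically from below (for $\sigma_n$ small but positive, the drift is bounded below by $\theta^2-(\theta-\delta)^2>0$). Since $\sum\sqrt{\eta_n}=\infty$ (because $\sqrt{\eta_n}\ge\eta_n$ for $\eta_n\in(0,1)$), $\sigma_n$ enters any such interval in finite time. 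Second, with the Lyapunov function $V(\sigma)=(\sigma-\theta)^2$ a standard expansion yields
\[
V(\sigma_{n+1})\le \bigl(1-c\sqrt{\eta_{n+1}}\bigr)V(\sigma_n)+O(\eta_{n+1})
\]
inside the trapping region, whence $V(\sigma_n)\to 0$ by comparison with $\prod(1-c\sqrt{\eta_j})\to 0$.

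The anticipated obstacle is bookkeeping: to absorb every error term (from $\log(1+x)=x+O(x^2)$, from $\sqrt{\eta_n/\eta_{n+1}}=1+o(1)$, and from $R_n$) into an arbitrary prescribed $\lambda>1$, the estimates must be uniform in the tail. Assumption~\ref{a:adapt-weight}(ii) is precisely what provides monotone control on the ratios $\sqrt{\eta_n/\eta_{n+1}}$, so that the various remainders decay simultaneously as $n\to\infty$; combining them with the Lyapunov contraction and the telescoping identity from the first paragraph then yields the stated two-sided bound on $\log(b_{n+k}/b_n)$ uniformly in $k\ge 1$.
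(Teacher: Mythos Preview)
Your reduction to the ratio $\rho_n=a_{n+1}/b_n$ and the normalised quantity $\sigma_n=\sqrt{\eta_n}\,\rho_n$ is essentially the same object the paper studies: their $g_{n+1}=\eta_{n+1}^{-1/2}(z_{n+1}-z_n)/z_n$ equals $\sqrt{\eta_{n+1}}(\rho_n-1)$, so $g_{n+1}=r_n\sigma_n-\sqrt{\eta_{n+1}}$ with $r_n=\sqrt{\eta_{n+1}/\eta_n}\to 1$, and the targets coincide. Where you differ is in the machinery. The paper first converts \eqref{eq:a-recursion}--\eqref{eq:b-recursion} into the second-order recursion \eqref{eq:b-alone-rec} for $b_n$ alone, sandwiches $b_n$ between two comparison sequences $z_n^{(1)},z_n^{(2)}$ with constant coefficients $\tilde\theta^{(1)}=\theta$ and $\tilde\theta^{(2)}=[(1-\eta_{m_1})^2-1+\theta^2]^{1/2}$, and then proves $g_n\to\tilde\theta$ in Lemma~\ref{lemma:g-convergence} by observing that $g_{n+1}=f_{n+1}(g_n)$ is a \emph{global} contraction on $[0,\infty)$ with moving fixed points $x_n^*\to\tilde\theta$ and $\sum_n|x_{n+1}^*-x_n^*|<\infty$. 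Your first-order recursion for $\rho_n$ is more direct and bypasses the sandwich, which is a genuine simplification.

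There is, however, a gap in your convergence step. Your expansion $\sigma_{n+1}-\sigma_n=\sqrt{\eta_{n+1}}(\theta^2-\sigma_n^2)+R_n$ carries a remainder that includes the term $(r_n-1)\sigma_n$; under Assumption~\ref{a:adapt-weight} this is only $o(\sqrt{\eta_{n+1}})$ for bounded $\sigma_n$ (and not $O(\eta_{n+1})$ in general, since $r_n-1=-\sqrt{\eta_{n+1}}(\eta_{n+1}^{-1/2}-\eta_n^{-1/2})$ with the bracket merely tending to zero). Higher-order terms from the expansion of $(1+\eta_{n+1}(\rho_n-1))^{-1}$ likewise scale with powers of $\sigma_n$. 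Your trapping argument uses the drift bound and $|R_n|$ to enter $[\theta-\delta,\theta+\delta]$, but controlling $|R_n|$ already presupposes $\sigma_n$ lies in a compact set, so the argument is circular as stated. The paper sidesteps this because the map $x\mapsto f_{n+1}(x)$ is a contraction for \emph{all} $x\ge 0$, so no a priori boundedness is needed. You can repair your route the same way: your exact map $\sigma\mapsto F_n(\sigma)=\dfrac{(1-\eta_{n+1})^2 r_n\sigma}{1-\eta_{n+1}+r_n\sqrt{\eta_{n+1}}\sigma}+\theta^2\sqrt{\eta_{n+1}}$ has derivative bounded by $(1-\eta_{n+1})r_n\le 1$ on $[0,\infty)$, which gives the missing global control; then either run the moving-fixed-point argument of Lemma~\ref{lemma:g-convergence} directly, or use the contraction to establish boundedness first and only then invoke your Lyapunov step.
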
 
\begin{proof} 
    Let $m_0$ be the index from Lemma~\ref{lemma:tail-increasing} 
    after which the sequence $b_n$ is increasing.
Let $m_1> \max\{m_0,m'\}$ and
define the sequence $(z_n)_{n \ge m_1-1}$ by setting 
$z_{m_1-1} = b_{m_1-1}$ and
$z_{m_1} = b_{m_1}$, and for $n\ge m_1$ through the recursion
\begin{equation}
  z_{n+1} = z_n + \frac{\eta_{n+1}}{\eta_n}(1-\eta_n)^3(z_n-z_{n-1}) +
  \eta_{n+1}\tilde{\theta}^2 z_n
\end{equation}
where $\tilde{\theta}>0$ is a constant.
Consider such a sequence $(z_n)_{n\ge m_1-1}$ and define another sequence
$(g_n)_{n\ge m_1+1}$ through
\begin{equation*}
    \begin{split}
        g_{n+1} &\defeq 
        \eta_{n+1}^{-1/2}
      \frac{z_{n+1}-z_{n}}{z_{n}} 
      = 
      \eta_{n+1}^{-1/2} 
      \left[ 
\frac{\eta_{n+1}}{\eta_n}(1-\eta_n)^3\frac{z_n-z_{n-1}}{z_{n-1}}
\frac{z_{n-1}}{z_n} + \eta_{n+1}\tilde{\theta}^2
          \right] \\
      &= \eta_{n+1}^{1/2} \left(
      \frac{(1-\eta_n)^3}{\eta_n} \frac{g_n}{g_n + \eta_n^{-1/2}} +
       \tilde{\theta}^2\right).
\end{split}
\end{equation*}
Lemma \ref{lemma:g-convergence} below shows that $g_n \to \tilde{\theta}$.

Let us consider next two sequences $(z_n^{(1)})_{n\ge m_1-1}$ and 
$(z_n^{(2)})_{n\ge m_1-1}$ defined as $(z_n)_{n\ge m_1-1}$ above but using 
two different values $\tilde{\theta}^{(1)}$ and $\tilde{\theta}^{(2)}$,
respectively.
It is clear from \eqref{eq:b-alone-rec} that for the choice
$\tilde{\theta}^{(1)}\defeq \theta$ one has $b_n\le z_n^{(1)}$ for all $n\ge
m_1-1$.
Moreover, since $b_{m_1+1}/b_{m_1} \le z_{m_1+1}^{(1)}/z_{m_1}^{(1)}$, it holds 
by induction that
\begin{equation*}
    \begin{split}
    \frac{b_{n+1}}{b_n} &\le 1 +
    \frac{\eta_{n+1}}{\eta_n}(1-\eta_n)^3\left(1 - \frac{b_{n-1}}{b_n}\right)
    + \eta_{n+1}\tilde{\theta}^2 \\
    &\le 1 +
    \frac{\eta_{n+1}}{\eta_n}(1-\eta_n)^3\left(1 -
    \frac{z_{n-1}^{(1)}}{z_n^{(1)}}\right)
    + \eta_{n+1}\tilde{\theta}^2     
    = \frac{z_{n+1}^{(1)}}{z_n^{(1)}}
    \end{split}
\end{equation*}
also for all $n\ge m_1+1$.
By a similar argument one shows that if $\tilde{\theta}^{(2)} \defeq [
(1-\eta_{m_1})^2 -1 + \theta^2]^{1/2}$ then $b_n\ge z_n^{(2)}$ and $b_{n+1}/b_n\ge
z_{n+1}^{(2)}/z_n^{(2)}$ 
for all $n\ge m_1-1$.

Let $\lambda'>1$.
Since $g_n^{(1)}\to \tilde{\theta}^{(1)}$ and $g_n^{(2)} \to
\tilde{\theta}^{(2)}$ there is a $m_2\ge m_1$ such
that the following bounds apply
\begin{equation*}
    1 + \frac{\tilde{\theta}^{(2)}}{\lambda'}\sqrt{\eta_n} 
    \le
    \frac{z_n^{(2)}}{z_{n-1}^{(2)}}
    \qquad\text{and}\qquad
    \frac{z_n^{(1)}}{z_{n-1}^{(1)}}
    \le 1 + \lambda'
        \tilde{\theta}^{(1)}\sqrt{\eta_n} 
\end{equation*}
for all $n\ge m_2$.
Consequently, for all $n\ge m_2$, we have that
\begin{equation*}
    \log\left(\frac{b_{n+k}}{b_n}\right)
    \le \log\left(\frac{z_{n+k}^{(1)}}{z_n^{(1)}}\right) \le \sum_{j=n+1}^{n+k}
    \log\left(1+\lambda'\tilde{\theta}^{(1)}\sqrt{\eta_j}\right)
    \le \lambda'\theta\sum_{j=n+1}^{n+k}
    \sqrt{\eta_n}.
\end{equation*}
Similarly, by the mean value theorem 
\begin{equation*}
    \log\left(\frac{b_{n+k}}{b_n}\right)
    \ge \sum_{j=n+1}^{n+k}
    \log\left(1+\frac{\tilde{\theta}^{(2)}}{\lambda'}\sqrt{\eta_j}\right)
    \ge
    \frac{\tilde{\theta}^{(2)}}{\lambda'(1+\lambda'^{-1}\tilde{\theta}^{(2)}\sqrt{\eta_n})}
    \sum_{j=n+1}^{n+k} \sqrt{\eta_j}
\end{equation*}
since $\eta_n$ is decreasing. By letting the constant $m_1$ above be sufficiently large, 
the difference $|\tilde{\theta}^{(2)}-\theta|$ can be made arbitrarily
small, and by increasing $m_2$, the constant $\lambda'>1$ can be chosen
arbitrarily close to one.
\end{proof} 
Before Lemma \ref{lemma:g-convergence}, let us establish some properties of the
weight sequence $(\eta_n)_{n\ge 1}$ satisfying Assumption
\ref{a:adapt-weight}.
\begin{lemma} 
    \label{lemma:adapt-weight} 
    Suppose $(\eta_n)_{n\ge 1}$ satisfies Assumption \ref{a:adapt-weight}.
    Then,
    \begin{enumerate}[(a)]
    \item \label{item:adapt-c1}
      $(\eta_{n+1}/\eta_n)_{n\ge m'}$ is increasing with 
      $\eta_{n+1}/\eta_n \to 1$ and
    \item \label{item:adapt-c2} $\eta_{n+1}^{-1/2}-\eta_n^{-1/2} \to 0$.
    \end{enumerate}
\end{lemma}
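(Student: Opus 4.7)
The plan is to prove part (b) first, since part (b) gives a crucial ingredient for the limit claim in part (a), and both statements hinge on the auxiliary sequence
\[
    \delta_n \defeq \eta_{n+1}^{-1/2}-\eta_n^{-1/2}, \qquad
    a_n \defeq \eta_n^{-1/2}.
\]
By \eqref{item:adapt1} of Assumption~\ref{a:adapt-weight}, $a_n$ is nondecreasing (for $n\ge m'$), so $\delta_n\ge 0$; by \eqref{item:adapt3}, $\delta_n$ is decreasing. Hence $\delta_n$ converges to some $L\ge 0$. Suppose towards a contradiction that $L>0$. Then $\delta_k\ge L$ for every $k\ge m'$, and telescoping gives $a_n\ge a_{m'}+L(n-m')$, so $\eta_n\le [a_{m'}+L(n-m')]^{-2}$ for all $n\ge m'$. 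This sequence is summable, contradicting \eqref{item:adapt4}. Therefore $L=0$, which is precisely assertion (b).

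For (a), observe that $\eta_{n+1}/\eta_n = (a_n/a_{n+1})^2$, so the monotonicity claim is equivalent to showing that $a_{n+1}/a_n$ is decreasing for $n\ge m'$, that is,
\[
    a_{n+2}\,a_n \;\le\; a_{n+1}^2.
\]
Writing $a_{n+2}=a_{n+1}+\delta_{n+1}$ and $a_{n+1}=a_n+\delta_n$, both sides share the common term $a_{n+1}a_n$, and this inequality reduces to $\delta_{n+1}\,a_n \le \delta_n\,a_{n+1}$. Since $\delta_{n+1}\le \delta_n$ by \eqref{item:adapt3} and $a_n\le a_{n+1}$ by \eqref{item:adapt1}, the inequality follows by multiplying the two. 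This yields the monotonicity portion of (a).

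For the limit in (a), I would rewrite
\[
    \frac{\eta_{n+1}}{\eta_n} = \left(\frac{a_n}{a_{n+1}}\right)^2
    = \left(1-\frac{\delta_n}{a_{n+1}}\right)^2.
\]
Part (b) already proved gives $\delta_n\to 0$, and $a_{n+1}=\eta_{n+1}^{-1/2}\to\infty$ because $\eta_n\to 0$ by \eqref{item:adapt1}. Therefore $\delta_n/a_{n+1}\to 0$ and $\eta_{n+1}/\eta_n\to 1$, completing (a).

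The proof is essentially a bookkeeping exercise, and I do not expect any serious technical obstacle; the only slightly subtle point is the reduction of the monotonicity of $\eta_{n+1}/\eta_n$ to the simple algebraic inequality $\delta_{n+1}a_n\le \delta_n a_{n+1}$, which then falls out at once from the two hypotheses on $\delta_n$ and $a_n$. The only place where the summability hypothesis \eqref{item:adapt4} is actually used is in ruling out $L>0$ in part (b); both claims of (a) rely only on \eqref{item:adapt1}, \eqref{item:adapt3}, and (b).
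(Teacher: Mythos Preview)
Your proof is correct. The monotonicity step in (a) is essentially identical to the paper's (both reduce to the inequality $\delta_{n+1}a_n\le \delta_n a_{n+1}$), but the logical order differs: the paper first establishes (a), using a geometric-tail argument ($\eta_{m'+k}\le c^k\eta_{m'}$ with $c=\lim \eta_{n+1}/\eta_n$) together with $\sum\eta_n=\infty$ to force $c=1$, and then derives (b) from (a). You instead prove (b) first by telescoping $a_n$ and noting that a positive limit $L$ would make $\eta_n$ summable, and then feed (b) into the limit claim of (a). Your route has the advantage that the passage to (b) is entirely self-contained; the paper's deduction of (b) from (a) only shows $\delta_n/a_n\to 0$, which by itself does not give $\delta_n\to 0$ without invoking again that $\delta_n$ is monotone, so your argument is in fact a bit cleaner there.
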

\begin{proof} 
Define $a_n \defeq \eta_n^{-1/2}$ for all $n\ge m'$. By Assumption
\ref{a:adapt-weight} (\ref{item:adapt1}) 
$(a_n)_{n\ge m'}$ is increasing and
by Assumption
\ref{a:adapt-weight} (\ref{item:adapt3}), 
$(\Delta a_n)_{n\ge m'+1}$ is decreasing, where 
$\Delta a_n \defeq a_n - a_{n-1}$. One can write
\[
    \frac{a_{n}}{a_{n+1}} = 
    \frac{1}{1 + \frac{\Delta a_{n+1}}{a_n}}
    \ge \frac{1}{1 + \frac{\Delta a_{n}}{a_{n-1}}}
    = \frac{a_{n-1}}{a_{n}}
\]
implying that $(\eta_{n+1}/\eta_n)_{n\ge m'}$ is increasing. 
Denote $c = \lim_{n\to\infty} \eta_{n+1}/\eta_n\le 1$. It holds that 
$\eta_{m'+k} \le c \eta_{m'+k-1} \le \cdots \le c^k \eta_{m'}$. 
If $c<1$, then $\sum_n \eta_n < \infty$ contradicting
Assumption \ref{a:adapt-weight} (\ref{item:adapt4}), so $c$ must be one, establishing
(\ref{item:adapt-c1}).

From (\ref{item:adapt-c1}), one obtains
\[
    \frac{\eta_{n+1}^{-1/2} - \eta_n^{-1/2}}{\eta_n^{-1/2}} 
    = \left(\frac{\eta_n}{\eta_{n+1}}\right)^{1/2} -1
    \to 0
\]
implying (\ref{item:adapt-c2}).
\end{proof}

\begin{lemma} 
\label{lemma:g-convergence} 
Suppose $m_1\ge 1$, $g_{m_1}\ge 0$, the sequence $(\eta_n)_{n\ge m_1}$
satisfies Assumption \ref{a:adapt-weight} and $\tilde{\theta}>0$ is a constant. 
The sequence $(g_n)_{n> m_1}$ defined through
\begin{equation*}
    \begin{split}
        g_{n+1} \defeq \eta_{n+1}^{1/2} \left(
      \frac{(1-\eta_n)^3}{\eta_n} \frac{g_n}{g_n + \eta_n^{-1/2}} +
       \tilde{\theta}^2\right)
    \end{split}
\end{equation*}
satisfies $\lim_{n\to\infty} g_n = \tilde{\theta}$.
\end{lemma}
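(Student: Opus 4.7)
My plan is to reformulate the recursion into a clean one-step form and then prove $\limsup g_n \le \tilde{\theta}$ and $\liminf g_n \ge \tilde{\theta}$ separately via drift-plus-excursion arguments using Lemma~\ref{lemma:adapt-weight}. Setting $\epsilon_n \defeq \eta_n^{1/2}$ and $r_n \defeq \epsilon_{n+1}/\epsilon_n$, straightforward algebra rewrites the recursion as
\[
g_{n+1} = r_n(1-\eta_n)^3 \frac{g_n}{1 + \epsilon_n g_n} + \epsilon_{n+1}\tilde{\theta}^2.
\]
Lemma~\ref{lemma:adapt-weight} gives $r_n \nearrow 1$, and combining $\epsilon_{n+1}^{-1} - \epsilon_n^{-1} \to 0$ with $\epsilon_n - \epsilon_{n+1} = \epsilon_n\epsilon_{n+1}(\epsilon_{n+1}^{-1} - \epsilon_n^{-1})$ yields $1 - r_n = o(\epsilon_n)$. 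Also $\sum \epsilon_n \ge \sum \eta_n = \infty$ since $\eta_n \le 1$. A simple induction gives $g_n > 0$, and the crude bound $g_{n+1} - g_n \le \epsilon_{n+1}\tilde{\theta}^2$ shows single-step increments vanish.

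For the upper bound, using $r_n(1-\eta_n)^3 \le 1$ and monotonicity of $x\mapsto x^2/(1+\epsilon x)$,
\[
g_{n+1} - g_n \le \epsilon_n\left[-\frac{(\tilde{\theta}+\delta)^2}{1+\epsilon_n(\tilde{\theta}+\delta)} + \frac{\epsilon_{n+1}}{\epsilon_n}\tilde{\theta}^2\right]
\]
whenever $g_n \ge \tilde{\theta}+\delta$. The bracket tends to $\tilde{\theta}^2 - (\tilde{\theta}+\delta)^2 = -2\tilde{\theta}\delta-\delta^2 < 0$, so for large $n$ one has a uniform drift $g_{n+1} - g_n \le -c_\delta\epsilon_n$ in $\{g_n \ge \tilde{\theta}+\delta\}$. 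Since $\sum \epsilon_n = \infty$, $g_n$ must fall below $\tilde{\theta}+\delta$ infinitely often; the vanishing upward jumps then confine any subsequent excursion above $\tilde{\theta}+\delta$ to $(\tilde{\theta}+\delta,\tilde{\theta}+2\delta]$ eventually. Since $\delta > 0$ is arbitrary, $\limsup g_n \le \tilde{\theta}$.

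The lower bound is symmetric. Using $(1+x)^{-1} \ge 1 - x$,
\[
g_{n+1} - g_n \ge [r_n(1-\eta_n)^3 - 1]g_n - r_n(1-\eta_n)^3\epsilon_n g_n^2 + \epsilon_{n+1}\tilde{\theta}^2,
\]
and the now-established boundedness of $g_n$ makes the error term $[r_n(1-\eta_n)^3 - 1]g_n = o(\epsilon_n)$. For $g_n \le \tilde{\theta}-\delta$ with $\delta \in (0,\tilde{\theta})$, the leading contribution is $\epsilon_n[(\epsilon_{n+1}/\epsilon_n)\tilde{\theta}^2 - (\tilde{\theta}-\delta)^2] \ge c_\delta'\epsilon_n$ for $n$ large, since the bracket tends to $2\tilde{\theta}\delta - \delta^2 > 0$. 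The same excursion argument then gives $\liminf g_n \ge \tilde{\theta}$, finishing the proof.

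The main obstacle is the excursion bookkeeping: converting a one-sided drift together with $\sum \epsilon_n = \infty$ and vanishing single-step increments into a uniform trap around $\tilde{\theta}$. Care is also required with the ordering, since the lower-bound drift estimate needs a priori boundedness of $g_n$ to absorb the $[r_n(1-\eta_n)^3 - 1]g_n$ term.
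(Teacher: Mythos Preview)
Your argument is correct and takes a genuinely different route from the paper's. The paper views the recursion as $g_{n+1}=f_{n+1}(g_n)$, shows that each $f_{n+1}$ is a contraction on $[0,\infty)$ with coefficient $(1-\eta_n)^3$, computes the fixed point $x_{n+1}^*$ explicitly as a root of a quadratic, proves $x_n^*\to\tilde\theta$, and then spends most of the effort establishing $\sum_n |x_{n+1}^*-x_n^*|<\infty$ via careful case-by-case estimates of $|\xi_{n+1}-\xi_n|$ and $|\mu_{n+1}-\mu_n|$; the conclusion then follows from the standard ``contraction chasing a slowly moving target'' bound together with $\prod_k(1-\eta_{k-1})^3\to 0$. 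Your approach bypasses the fixed-point analysis entirely: after the clean rewriting $g_{n+1}=r_n(1-\eta_n)^3 g_n/(1+\epsilon_n g_n)+\epsilon_{n+1}\tilde\theta^2$, you run a two-sided drift argument at scale $\epsilon_n=\eta_n^{1/2}$, using only $r_n\to 1$, $1-r_n=o(\epsilon_n)$, $\epsilon_n\to 0$ and $\sum\epsilon_n=\infty$, all of which are immediate from Lemma~\ref{lemma:adapt-weight}. The gain is that you never need the summability of fixed-point increments (and hence never invoke Assumption~\ref{a:adapt-weight}\,(\ref{item:adapt3}) directly beyond what Lemma~\ref{lemma:adapt-weight} already packages), which makes the argument shorter and more robust. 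The paper's approach, on the other hand, gives a sharper quantitative picture: it exhibits the effective attractor $x_n^*$ and its rate of convergence, which could be useful if one wanted finer asymptotics on $g_n-\tilde\theta$. Your ordering remark is exactly right: the lower-bound drift requires the a~priori bound $\sup_n g_n<\infty$, which you correctly obtain first from the upper-bound step.
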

\begin{proof} 
Define the functions $f_n:\R_+\to\R_+$ for $n\ge m_1+1$ by
\begin{equation*}
    f_{n+1}(x) \defeq 
    \eta_{n+1}^{1/2} \left(
      \frac{(1-\eta_n)^3}{\eta_n} \frac{x}{x + \eta_n^{-1/2}} +
       \tilde{\theta}^2\right).
\end{equation*}
The functions $f_n$ are contractions on $[0,\infty)$ with contraction coefficient $q_n
\defeq (1-\eta_{n})^3$ since for all $x,y\ge 0$
\begin{equation*}\begin{split}
    \left|f_{n+1}(x)-f_{n+1}(y)\right|
    &
    = \eta_{n+1}^{1/2} 
    \frac{(1-\eta_n)^3}{\eta_n}
    \left|
       \frac{x}{x + \eta_n^{-1/2}} -
      \frac{y}{y + \eta_n^{-1/2}} 
       \right| \\
    &= \left(\frac{\eta_{n+1}}{\eta_n}\right)^{1/2} 
    \frac{(1-\eta_n)^3}{\eta_n}
    \left|
       \frac{x-y}{(x + \eta_n^{-1/2})(y + \eta_n^{-1/2})}
       \right| \\
    &\le \left(\frac{\eta_{n+1}}{\eta_n}\right)^{1/2} 
    (1-\eta_n)^3
    \left| x-y \right| \le q_{n+1} \left| x-y \right|
    \end{split}
\end{equation*}
where the second inequality holds since $\eta_{n+1}\le \eta_n$.

The fixed point of $f_{n+1}$ can be written as
\begin{equation*}
    x^*_{n+1} \defeq \frac{1}{2}\left(
    -\xi_{n+1} + \sqrt{\xi_{n+1}^2 +
 \mu_{n+1}}
    \right)
\end{equation*}
where 
\begin{eqnarray*}
    \xi_{n+1} &\defeq &
    \eta_n^{-1/2} - \eta_{n+1}^{1/2}\eta_n^{-1}
(1-\eta_{n})^3 -
    \eta_{n+1}^{1/2}\tilde{\theta}^2 \\
    \mu_{n+1} &\defeq&
4 \eta_n^{-1/2}\eta_{n+1}^{1/2} \tilde{\theta}^2.
\end{eqnarray*}
Lemma \ref{lemma:adapt-weight} (\ref{item:adapt-c1}) implies
$\mu_{n+1} \to 4 \tilde{\theta}^2$. Moreover,
\begin{equation*}
    \begin{split}
    \xi_{n+1} &=
    \eta_n^{-1/2} - \eta_{n+1}^{1/2}\eta_n^{-1} 
    + \eta_{n+1}^{1/2}(3 - 3\eta_n + \eta_n^2
    -\tilde{\theta}^2)\\
    &= \left(\frac{\eta_{n+1}}{\eta_n}\right)^{1/2}
    \left(\eta_{n+1}^{-1/2}-\eta_n^{-1/2}\right)
    + \eta_{n+1}^{1/2}(3 - 3\eta_n + \eta_n^2 
    -\tilde{\theta}^2).
    \end{split}
\end{equation*}
Therefore, 
by Assumption \ref{a:adapt-weight} (\ref{item:adapt1}) and Lemma
\ref{lemma:adapt-weight}, $\xi_{n+1} \to 0$ and consequently
the fixed points satisfy $x_n^* \to \tilde{\theta}$.

Consider next the consecutive differences of the fixed points.
Using the mean value theorem and the triangle inequality, write
\begin{equation*}
    \begin{split}
    2\left|x_{n+1}^* - x_n^*\right| 
    &\le
    \left| \xi_{n+1}-\xi_n  \right|
      + \frac{1}{2\sqrt{\tau_n}}
      \left|
      \xi_{n+1}^2 -
      \xi_n^2
      + \mu_{n+1} - \mu_{n}
      \right| \\
    &\le 
\left| \xi_{n+1}-\xi_n  \right|      
      + \frac{\tau_n'}{\sqrt{\tau_n}}
\left| \xi_{n+1}-\xi_n  \right|
      + \frac{1}{2\sqrt{\tau_n}}
      \left|\mu_{n+1} - \mu_{n}
      \right| \\
    &\le c_1 
\left| \xi_{n+1}-\xi_n  \right|
      + c_1\left|\mu_{n+1} - \mu_{n}\right|
    \end{split}
\end{equation*}
where the value of ${\tau_n}$ is between 
$\xi_{n+1}^2 + \mu_{n+1}$ and 
$\xi_n^2 + \mu_n$ converging to $4\tilde{\theta}^2>0$, 
the value of $\tau_n'$ is between
$|\xi_{n+1}|$ and $|\xi_n|$ converging to zero,
and $c_1>0$ is a constant.

The differences of the latter terms satisfy for all $m\ge m'$ 
\begin{equation*}
    \begin{split}
    \sum_{n=m'}^m
    \left|\mu_{n+1} - \mu_{n}\right|
        &= 4\tilde{\theta}^2 \sum_{n=m'}^m
        \left[
            \left(\frac{\eta_{n+1}}{\eta_{n}}\right)^{1/2}
            - \left(\frac{\eta_n}{\eta_{n-1}}\right)^{1/2}
        \right] \\
        &\le 4\tilde{\theta}^2 
        \left[ 1 -
            \left(\frac{\eta_{m'}}{\eta_{m'-1}}\right)^{1/2}
        \right] \le 4 \tilde{\theta}^2.
        \end{split}
\end{equation*}
by Assumption \ref{a:adapt-weight} (\ref{item:adapt3}) and Lemma
\ref{lemma:adapt-weight} (\ref{item:adapt-c1}).
For the first term, let us estimate
\begin{equation*}
    \begin{split}
    \left|\xi_{n+1}-\xi_n\right| 
    \le \left|
    \left(\frac{\eta_{n+1}}{\eta_n}\right)^{1/2}
    \left(\eta_{n+1}^{-1/2}-\eta_n^{-1/2}\right)
    - \left(\frac{\eta_{n}}{\eta_{n-1}}\right)^{1/2}
    \left(\eta_{n}^{-1/2}-\eta_{n-1}^{-1/2}\right)
    \right| \\
    + \left|3-\tilde{\theta}^2\right|\left|
       \eta_{n}^{1/2}- \eta_{n+1}^{1/2}\right|
    + \left|\eta_{n+1}^{1/2}(3\eta_n - \eta_n^2)
    - \eta_{n}^{1/2}(3\eta_{n-1} - \eta_{n-1}^2) \right|.
    \\
    \end{split} 
\end{equation*}
Assumption \ref{a:adapt-weight} (\ref{item:adapt1}) implies that
$\eta_{n}^{1/2}-\eta_{n+1}^{1/2}\ge 0$
for $n\ge m'$ and hence $\sum_{n=m'}^m \left|\eta_{n}^{1/2}- \eta_{n+1}^{1/2}\right|
    \le \eta_{m'}^{1/2}$ for any $m\ge m'$. Since
the function $(x,y)\mapsto x(3y-y^2)$ is Lipschitz on $[0,1]^2$,
there is a constant $c_2$ independent of $n$ such that
$
    \big|\eta_{n+1}^{1/2}(3\eta_n - \eta_n^2)
    - \eta_{n}^{1/2}(3\eta_{n-1} - \eta_{n-1}^2) \big|
    \le c_2 \big(
    | \eta_{n+1}^{1/2} - \eta_n^{1/2} | 
    + | \eta_n - \eta_{n-1} | \big)
$, and a similar argument shows that
\[
    \sum_{n=m'}^m \left|\eta_{n+1}^{1/2}(3\eta_n - \eta_n^2)
    - \eta_{n}^{1/2}(3\eta_{n-1} - \eta_{n-1}^2) \right| \le c_3 <\infty.
\]
One can also estimate
\begin{equation*}
    \begin{split}
        &
    \left|
    \left(\frac{\eta_{n+1}}{\eta_n}\right)^{1/2}
    \left(\eta_{n+1}^{-1/2}-\eta_n^{-1/2}\right)
    - \left(\frac{\eta_{n}}{\eta_{n-1}}\right)^{1/2}
    \left(\eta_{n}^{-1/2}-\eta_{n-1}^{-1/2}\right)
    \right| \\
    &\le 
    c_4
    \left|
    \left(\frac{\eta_{n+1}}{\eta_n}\right)^{1/2}
    -\left(\frac{\eta_{n}}{\eta_{n-1}}\right)^{1/2}
    \right|
    + c_4\left|
    \left(\eta_{n+1}^{-1/2}-\eta_n^{-1/2}\right)-
    \left(\eta_{n}^{-1/2}-\eta_{n-1}^{-1/2}\right)
    \right|
    \end{split}
\end{equation*}
yielding by 
Assumption \ref{a:adapt-weight}
(\ref{item:adapt3}) 
and Lemma \ref{lemma:adapt-weight} that
$ \sum_{n=m'}^m | \xi_{n+1}-\xi_n | \le c_5$
for all $m\ge m'$, with a constant $c_5<\infty$.
Combining the above estimates, the fixed point differences satisfy
\begin{equation*}
    \sum_{n=m'}^m |x^*_{n+1} - x^*_{n}| < \infty.
\end{equation*}

Fix a $\delta>0$ and let $n_\delta>m_1$ be sufficiently large so that 
$\sum_{k=n_\delta+1}^\infty |x^*_{n+1} - x^*_{n}| \le \delta$ implying
also that
$|x^*_n-\tilde{\theta}|\le\delta$ for all $n\ge n_\delta$.
Then, for $n\ge n_\delta$ one may write
\begin{equation*}
    \begin{split}
    \left|g_n - \tilde{\theta}\right| 
        &\le \left|g_n -x_{n}^*\right| 
          + \left|x_n^* - \tilde{\theta}\right| 
        \le \left|f_n(g_{n-1}) - f_n(x_{n}^*)\right| 
          + \delta
       \\
        &\le q_n\left|g_{n-1} - x_{n}^*\right| 
          + \delta
        \le q_n\left|g_{n-1} - x_{n-1}^*\right| 
          + \left|x_{n-1}^* - x_{n}^*\right| 
          + \delta
        \\
        &\le q_n q_{n-1}\left|g_{n-2} - x_{n-2}^*\right| 
          + \left|x_{n-2}^* - x_{n-1}^*\right| 
          + \left|x_{n-1}^* - x_{n}^*\right| 
          + \delta \\
        &\le \cdots 
        \le \left(\prod_{k=n_\delta+1}^n q_k\right) 
        \left|g_{n_\delta} - x_{n_\delta}^*\right|
            + 2\delta.
   \end{split}
\end{equation*}
Since
$    \log \prod_{k=n_\delta+1}^n q_k
= 3 \sum_{k=n_\delta+1}^n \log(1-\eta_{k-1})
\le - 3 \sum_{k=n_\delta}^{n-1} \eta_k \to -\infty
$ as $n\to\infty$ by Assumption \ref{a:adapt-weight} (\ref{item:adapt4}),
it holds that $(\prod_{k=n_\delta+1}^n
q_k)|g_{n_\delta}-x_{n_\delta}^*|\to 0$.
That is, $|g_n - \tilde{\theta}| \le 3\delta$
for any sufficiently large $n$, and since $\delta>0$ was arbitrary,
$g_n\to \tilde{\theta}$. 
\end{proof}


\subsection{Uniform Target: Path-wise Behaviour} 
\label{sec:path} 

Section \ref{sec:expectation} characterised the behaviour of the sequence 
$\Eop{S_{n}}$ when the chain $(X_n)_{n\ge 2}$ follows the `adaptive
random walk' recursion \eqref{eq:adaptive-random-walk}.
In this section, we shall verify that almost every sample path $(S_n)_{n\ge
1}$ of the same process are increasing.

Fix a unit vector $u\in\R^d$ and define the scalar process
$(Z_n)_{n\ge 2}$ through
\begin{equation}
    Z_{n+1} \defeq u^T \frac{X_{n+1}-M_n}{\|S_n^{1/2} u\|}.
    \label{eq:z-def}
\end{equation}
where $\|x\| \defeq \sqrt{x^T x}$ stands for the Euclidean norm.
The behaviour of the process $(Z_n)_{n\ge 2}$ determines the behaviour of
$(u^T S_n u)_{n\ge 2}$ since one can write a recursion for
$(u^T S_n u)_{n\ge 2}$ using only $(Z_n)_{n\ge 2}$
\begin{equation}
    \begin{split}
    u^T S_{n+1} u &= (1-\eta_{n+1})u^T S_n u +
    \eta_{n+1}u^T(X_{n+1}-M_n)(X_{n+1}-M_n)^T u \\
    &= [1 + \eta_{n+1} (Z_{n+1}^2-1)] u^T S_n u.
    \end{split}
    \label{eq:z-char-s}
\end{equation}
On the other hand, one can express $(Z_n)_{n\ge 2}$ in terms of
$(W_n)_{n\ge 2}$ and $(S_n)_{n\ge 1}$
\[
    \begin{split}
    Z_{n+1} & = \theta u^T \frac{S_n^{1/2} W_{n+1}}{\| S_n^{1/2} u \|} 
    + (1-\eta_n)u^T \frac{X_n - M_{n-1}}{\| S_n^{1/2} u \|} \\
    &= \theta \frac{u^T S_n^{1/2} }{\|S_n^{1/2} u\|} W_{n+1}
    + (1-\eta_n)\left(\frac{u^T S_{n-1} u}{u^T S_n u}\right)^{1/2} Z_n.
    \end{split}
\]
Using \eqref{eq:z-char-s}, this simplifies to
\begin{equation}
    Z_{n+1}= \theta \tilde{W}_{n+1} + U_n Z_n
    \label{eq:z-almost-random-walk}
\end{equation}
where 
\[
    \tilde{W}_{n+1} \defeq  \frac{u^T S_n^{1/2} }{\|S_n^{1/2} u\|} W_{n+1}
    \qquad\text{and}\qquad
    U_n \defeq (1-\eta_n)\left(\frac{1}{1+\eta_n(Z_n^2 - 1)}\right)^{1/2}.
\]
Let us observe first that $(\tilde{W}_{n})_{n\ge 2}$ are independent if
the distribution $\tilde{q}$ of $(W_n)_{n\ge 2}$ is spherically symmetric.
\begin{lemma} 
    Assume $(W_n)_{n\ge 1}$ are independent and follow a spherically
    symmetric non-degenerate distribution in $\R^d$.  Then
    $(\tilde{W}_n)_{n\ge 1}$ are independent and identically distributed
    non-degenerate real-valued random variables.
\end{lemma}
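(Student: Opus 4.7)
The plan is to rewrite $\tilde{W}_{n+1}$ as an inner product of a unit vector that is measurable with respect to the past with the fresh Gaussian-like innovation, and then exploit the spherical symmetry of $\tilde{q}$ to conclude that this inner product carries no information about the past. Concretely, I would set
\begin{equation*}
    v_n \defeq \frac{S_n^{1/2} u}{\|S_n^{1/2} u\|},
\end{equation*}
which is a well-defined unit vector since $S_n\in\mathcal{C}^d$, and is $\F_n$-measurable. One then recognises that $\tilde{W}_{n+1} = v_n^T W_{n+1}$.

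The key step is the following distributional identity: for \emph{any} deterministic unit vector $v\in\R^d$, one has $v^T W \stackrel{d}{=} W^{(1)}$, where $W$ has the spherically symmetric law $\tilde{q}$ and $W^{(1)}$ is its first coordinate. This follows because there is an orthogonal matrix $O$ with $Ov = e_1$, and spherical symmetry gives $OW\stackrel{d}{=}W$, so $v^T W = (Ov)^T(OW) = e_1^T(OW) \stackrel{d}{=} e_1^T W$. In particular, the distribution of $v^T W$ does not depend on $v$.

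Next I would condition on $\F_n$. Since $W_{n+1}$ is independent of $\F_n$ and $v_n$ is $\F_n$-measurable, the regular conditional distribution of $\tilde{W}_{n+1} = v_n^T W_{n+1}$ given $\F_n$ coincides with the (unconditional) distribution of $v^T W$ evaluated at the frozen $v = v_n$, which by the preceding paragraph is the law of $W^{(1)}$ \emph{regardless of $v_n$}. Hence $\tilde{W}_{n+1}$ is independent of $\F_n$ and has the distribution of $W^{(1)}$. Because $\tilde{W}_2,\ldots,\tilde{W}_n$ are all $\F_n$-measurable, a routine induction on $n$ then upgrades this pairwise statement to full joint independence of the sequence $(\tilde{W}_n)_{n\ge 2}$, all with the common law of $W^{(1)}$.

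Non-degeneracy is a short afterword: if $W^{(1)}$ were a.s.\ constant, then by spherical symmetry every linear combination $v^T W$ would be a.s.\ constant, forcing $W$ to be a.s.\ constant; combined with $\E W = 0$ (from symmetry) this would make $W$ degenerate at the origin, contradicting the hypothesis. No step here is genuinely delicate; the only place to be careful is the measurability/conditioning argument in the third step, making sure that the spherical-symmetry identity is applied to the frozen unit vector $v_n$ under the conditional measure rather than to a random vector under the unconditional one.
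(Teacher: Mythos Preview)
Your proposal is correct and follows essentially the same route as the paper: define the $\F_n$-measurable unit vector $v_n = S_n^{1/2}u/\|S_n^{1/2}u\|$, write $\tilde{W}_{n+1}=v_n^T W_{n+1}$, and use rotational invariance together with independence of $W_{n+1}$ from $\F_n$ to conclude that $\P(\tilde{W}_{n+1}\in A\mid\F_n)=\P(e_1^T W_1\in A)$. You are in fact slightly more explicit than the paper, which does not spell out the induction to joint independence or the non-degeneracy argument.
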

\begin{proof} 
    Choose a measurable $A\subset\R$,
    denote $T_n \defeq \|S_n^{1/2} u\|^{-1}S^{1/2}_n u$ and
    define $A_n \defeq \{x\in\R^d: T_n^T x \in A\}$. Let $R_n$ be a rotation
    matrix such that $R_n^T T_n = e_1\defeq (1,0,\ldots,0)\in\R^d$.
    Since $W_{n+1}$ is independent of $\F_n$, we have
    \[\begin{split}
        \P(\tilde{W}_{n+1}\in A\mid\F_n)
        &= \P(W_{n+1}\in A_n\mid \F_{n})
        = \P(R_n W_{n+1}\in A_n\mid \F_{n}) \\
        &= \P(e_1^T W_{n+1} \in A\mid \F_n) 
        = \P(e_1^T W_1 \in A) 
        \end{split}
    \]
    by the rotational invariance of the distribution of $(W_{n})_{n\ge 1}$.
\end{proof} 
Notice particularly that if $({W}_n)_{n\ge 2}$  are standard Gaussian
vectors in $\R^d$ then $(\tilde{W}_n)_{n\ge 2}$ are standard Gaussian random
variables.

Only values $|Z_{n}|<1$ can decrease
$S_{n}$ as shown by \eqref{eq:z-char-s}. 
But if both $\eta_n$ and $\eta_n Z_n^2$ are small, the variable $U_n$ is
clearly close to unity, and consequently $Z_n$ behaves almost as a random walk.
Let us consider an auxiliary result quantifying the behaviour of this
random walk.
\begin{lemma} 
    \label{lemma:hit-tube} 
Let $n_0\ge 2$, suppose 
$\tilde{Z}_{n_0-1}$ is $\F_{n_0-1}$-measurable random variable 
and suppose $(\tilde{W}_n)_{n\ge n_0}$ are respectively
$(\F_n)_{n\ge n_0}$-measurable and non-degenerate i.i.d.~random variables.
Define for $\tilde{Z}_n$ for $n\ge 2$ through
\[
    \tilde{Z}_{n+1} = \tilde{Z}_n + \theta \tilde{W}_{n+1}.
\]
Then, for any $N,\delta_1,\delta_2 > 0$, there is a $k_0\ge 1$ such that
\[
    \Pcond{
      \frac{1}{k} \sum_{j=1}^{k} \charfun{\{|\tilde{Z}_{n+j}|\le N\}}
      \ge \delta_1
    }{ \F_n } \le \delta_2
\]
a.s.~for all $n\ge 1$ and $k\ge k_0$.
\end{lemma}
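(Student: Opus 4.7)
The plan is to condition on $\F_n$ so that the remainder of the walk decouples from the past into a fresh random walk starting from a fixed point, then to bound the expected occupation fraction by a L\'evy concentration-function estimate, and finally to conclude via Markov's inequality.

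First, I would observe that conditional on $\F_n$ the value $\tilde Z_n$ is a constant and the increments $(\tilde W_{n+i})_{i\ge 1}$ are i.i.d., independent of $\F_n$, with the common non-degenerate law of $\tilde W_1$. Writing $S'_j \defeq \sum_{i=1}^{j} \tilde W_{n+i}$, we have $\tilde Z_{n+j} = \tilde Z_n + \theta S'_j$, so
\begin{equation*}
\Pcond{|\tilde Z_{n+j}|\le N}{\F_n}
= \Pcond{\theta S'_j \in [-N-\tilde Z_n,\, N-\tilde Z_n]}{\F_n}
\le \mathcal{Q}_j(2N),
\end{equation*}
where $\mathcal{Q}_j(L) \defeq \sup_{x\in\R} \P(\theta S'_j \in [x,x+L])$ is the L\'evy concentration function of $\theta S'_j$. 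Crucially, the right-hand side is independent of $\tilde Z_n$ (and hence of $n$), which is the source of the uniformity in $n$ that the lemma requires.

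The key analytical input is the Kolmogorov--Rogozin inequality: because the $\tilde W_i$ are i.i.d.\ and non-degenerate, it yields $\mathcal{Q}_j(L) \le C_L/\sqrt{j}$ with a finite constant $C_L$ depending only on $L$ and the common law. Taking conditional expectations and then applying the conditional Markov inequality gives
\begin{equation*}
\Pcond{\frac{1}{k}\sum_{j=1}^{k} \charfun{\{|\tilde Z_{n+j}|\le N\}} \ge \delta_1}{\F_n}
\le \frac{1}{\delta_1 k}\sum_{j=1}^{k} \mathcal{Q}_j(2N)
\le \frac{2 C_{2N}}{\delta_1 \sqrt{k}},
\end{equation*}
so it suffices to take $k_0 \ge \bigl(2 C_{2N}/(\delta_1 \delta_2)\bigr)^{2}$.

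The main obstacle is the concentration estimate $\mathcal{Q}_j(L) = O(j^{-1/2})$ itself: since \emph{no} moment assumption is placed on $\tilde W_1$, a local-CLT or variance-based computation is not directly available, and the $1/\sqrt{j}$ decay must be extracted from bare non-degeneracy -- either by quoting Kolmogorov--Rogozin, or by an ad hoc symmetrisation and thinning argument that isolates from $\tilde W_1$ a component with strictly positive effective dispersion. Once this uniform-in-starting-point bound on $\mathcal{Q}_j$ is in hand, the rest of the argument is a one-line Ces\`aro summation plus Markov.
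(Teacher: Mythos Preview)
Your proof is correct and follows essentially the same route as the paper: both arguments bound the conditional occupation probability $\Pcond{|\tilde Z_{n+j}|\le N}{\F_n}$ by the L\'evy concentration function of the partial sum $\theta\sum_{i=1}^j \tilde W_{n+i}$, invoke the Kolmogorov--Rogozin inequality to obtain the $c\,j^{-1/2}$ decay (uniformly in the starting point), Ces\`aro-sum to $O(k^{-1/2})$, and finish with the conditional Markov inequality. The paper likewise records Kolmogorov--Rogozin in an appendix precisely because no moment assumption on $\tilde W_1$ is available, which is exactly the obstacle you identified.
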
 
\begin{proof} 
From the Kolmogorov-Rogozin inequality, Theorem
\ref{th:kolmogorov-rogozin} in Appendix \ref{sec:kolmogorov-rogozin},
\[
    \P(\tilde{Z}_{n+j}-\tilde{Z}_{n} \in [x,x+2N] \mid \F_n) \le c_1 j^{-1/2}
\]
for any $x\in\R$, where the constant $c_1>0$ depends on $N$, $\theta$ and 
on the distribution of $W_j$. In particular, since $\tilde{Z}_{n+j}-\tilde{Z}_{n}$ is
independent of $\tilde{Z}_{n}$, one may set $x=-Z_n-N$ above, and thus
$\Pcond{|\tilde{Z}_{n+j}|\le N}{\F_n}\le c_1 j^{-1/2}$.
The estimate
\[
    \Econd{
      \frac{1}{k} \sum_{j=1}^{k} \charfun{\{|\tilde{Z}_{n+j}|\le N\}}
    }{ \F_n } \le \frac{c_1}{k} \sum_{j=1}^{k} j^{-1/2} \le c_2 k^{-1/2}
\]
implies $\P\big(k^{-1} \sum_{j=1}^{k} \charfun{\{|\tilde{Z}_{n+j}|\le N\}} \ge
\delta_1\;\big|\;\F_n\big) \le \delta_1^{-1} c_2 k^{-1/2}$,
concluding the proof.
\end{proof} 
The technical estimate in the next Lemma \ref{lemma:drift-bound} 
makes use of the above mentioned random walk approximation and
guarantees ultimately a positive `drift' for
the eigenvalues of $S_n$. The result requires that the
adaptation sequence $(\eta_n)_{n\ge 2}$ is `smooth' in the sense 
that the quotients converge to zero.
\begin{assumption}
    \label{a:adapt-nonosc} 
    The adaptation weight sequence $(\eta_n)_{n\ge 2}\subset(0,1)$ satisfies
    \[
        \lim_{n\to\infty} \frac{\eta_{n+1}}{\eta_n} = 1.
    \]
\end{assumption} 
\begin{lemma}
    \label{lemma:drift-bound} 
Let $n_0\ge 2$, 
suppose $Z_{n_0-1}$ is $\F_{n_0-1}$-measurable, 
and assume $(Z_n)_{n\ge n_0}$ follows \eqref{eq:z-almost-random-walk} with
non-degenerate i.i.d.~variables
$(\tilde{W}_n)_{n\ge n_0}$ measurable with respect to $(\F_n)_{n\ge
n_0}$, respectively, and
the adaptation weights $(\eta_n)_{n\ge n_0}$ satisfy Assumption
\ref{a:adapt-nonosc}. Then, for any $C\ge 1$ and $\epsilon>0$, 
there are indices $k\ge 1$ 
and $n_1\ge n_0$ such that $\Pcond{L_{n,k}}{\F_n} \le \epsilon$ 
a.s.~for all $n\ge n_1$, where
\[
    L_{n,k} \defeq \left\{
      \sum_{j=1}^{k} \log\left[1 + \eta_{n+j}\left(Z_{n+j}^2 - 1\right)\right]
    < k C \eta_n
    \right\}.
\]
\end{lemma}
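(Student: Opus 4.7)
The plan is to lower-bound the partial sum $S_{n,k} := \sum_{j=1}^{k}\log[1+\eta_{n+j}(Z_{n+j}^2-1)]$ by partitioning the indices $j\in\{1,\ldots,k\}$ into a ``large'' set $B := \{j : Z_{n+j}^2 > M\}$ and a ``small'' set $G := \{j : Z_{n+j}^2 \le M\}$, with $M$ a threshold to be tuned from $C$. Using $\log(1+x)\ge x/2$ for small $x\ge 0$, the bound $\log(1-\eta)\ge -2\eta$ for small $\eta\ge 0$, and the fact that by Lemma \ref{lemma:adapt-weight}~(\ref{item:adapt-c1}) and Assumption \ref{a:adapt-nonosc} the ratio $\eta_{n+j}/\eta_n$ can be kept in $[1/2,2]$ uniformly over $j\le k$ for all large $n$, each index in $B$ contributes at least $\tfrac{1}{4}(M-1)\eta_n$ and each in $G$ at least $-4\eta_n$. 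Hence if $|B|\ge(1-\delta)k$ for some small $\delta$, then $S_{n,k}\ge kC\eta_n$ once $M$ is chosen large enough.

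I would next control $|G|$ by coupling $Z$ to a pure random walk. Set $\tilde Z_n := Z_n$ and $\tilde Z_{n+j+1} := \tilde Z_{n+j}+\theta\tilde W_{n+j+1}$, and consider the discrepancy $D_{n+j} := Z_{n+j}-\tilde Z_{n+j}$. From \eqref{eq:z-almost-random-walk} it satisfies $D_{n+j+1} = D_{n+j}+(U_{n+j}-1)Z_{n+j}$ with $D_n=0$. A Taylor expansion of $U_{n+j}$ yields $|U_{n+j}-1|\le c(R)\eta_{n+j}$ whenever $|Z_{n+j}|\le R$, with $c(R)=O(R^2)$, so on the event $E_R := \{\max_{i\le k}|Z_{n+i}|\le R\}$ telescoping gives $|D_{n+j}|\le 2c(R)Rk\eta_n$ (after using the ratio bound). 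For fixed $k$ and $R$ this tends to $0$ as $n\to\infty$, so for $n$ large $|D_{n+j}|\le 1$ throughout the window, and hence $|Z_{n+j}|\le\sqrt{M}$ implies $|\tilde Z_{n+j}|\le\sqrt{M}+1$. Applying Lemma \ref{lemma:hit-tube} with $N := \sqrt{M}+1$, $\delta_1 := \delta$, and $\delta_2 := \epsilon/2$ then supplies a $k$ such that $\Pcond{|G|>\delta k}{\F_n}\le \epsilon/2$ on $E_R$.

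The main obstacle is the residual event $E_R^c$, on which the coupling breaks down. I would handle it by choosing $R$ so large that a single excursion above $R$ already guarantees $S_{n,k}\ge kC\eta_n$ on its own. Concretely, let $\tau := \min\{j\le k : |Z_{n+j}|>R\}$; the term at index $\tau$ is bounded below by $\log[1+\eta_{n+\tau}(R^2-1)]\ge \tfrac{1}{2}\eta_n(R^2-1)$ for $n$ large enough that $\eta_n R^2$ stays bounded, while each of the remaining $k-1$ terms is bounded below by $-4\eta_n$. The requirement $\tfrac{1}{2}(R^2-1)-4(k-1)\ge kC$ is met by taking $R=R(k,C)$ large enough. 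Combining the two cases yields $\Pcond{L_{n,k}}{\F_n}\le\epsilon$ for $n\ge n_1$.

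The order of parameter choices is therefore: first $\delta>0$ small and $M$ large from $C$; then $k$ from Lemma \ref{lemma:hit-tube} with those parameters; then $R$ from $k$ and $C$; and finally $n_1$ so that the coupling error, the adaptation-ratio deviation from $1$, and the smallness of $\eta_n$ required for the Taylor estimates are all simultaneously in force.
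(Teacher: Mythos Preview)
Your strategy is essentially the paper's: couple $(Z_{n+j})$ to a pure random walk on a ``good'' event and show that on the complementary ``bad'' event a single large term already forces $S_{n,k}\ge kC\eta_n$. The paper uses an $n$-dependent threshold $|Z_i|\le \eta_i^{-\gamma/2}$ (with $\gamma\in(0,2/3)$) rather than your fixed $R$, which makes the bad-event bound slightly cleaner since $\log[1+\eta_i(\eta_i^{-\gamma}-1)]\sim \eta_i^{1-\gamma}\gg kC\eta_i$ without tuning $R$ to $k$; but your fixed-$R$ variant is also valid once $R=R(k,C)$ is chosen as you describe.

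There is one genuine slip in the coupling. You set $\tilde Z_n:=Z_n$, so the discrepancy recursion $D_{n+j+1}=D_{n+j}+(U_{n+j}-1)Z_{n+j}$ starts at $j=0$ and the very first increment is $(U_n-1)Z_n$. Your event $E_R=\{\max_{i\le k}|Z_{n+i}|\le R\}$ does not control $Z_n$, and $(U_n-1)Z_n$ is \emph{not} uniformly small: for $Z_n^2$ of order $\eta_n^{-1}$ one gets $U_n\approx 1/\sqrt{2}$ and $|(U_n-1)Z_n|\asymp \eta_n^{-1/2}\to\infty$, wrecking the bound $|D_{n+j}|\le 2c(R)Rk\eta_n$. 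Including $i=0$ in $E_R$ does not help either, since on $\{|Z_n|>R\}$ your single-term argument for $E_R^{c}$ fails (the index $j=0$ is not part of $S_{n,k}$). The fix is exactly what the paper does: anchor the random walk at $\tilde Z_{n+1}:=Z_{n+1}$ so that $D_{n+1}=0$ and the telescoping only involves $Z_{n+1},\ldots,Z_{n+k-1}$, all controlled by $E_R$. Lemma~\ref{lemma:hit-tube} then gives the occupation bound conditional on $\F_{n+1}$, and taking $\E[\,\cdot\mid\F_n]$ yields the claim. With this adjustment your argument goes through.
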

\begin{proof} 
Fix $\gamma\in(0,2/3)$ and assume
$Z_n^2 \le \eta_n^{-\gamma}$. One may estimate
\begin{eqnarray*}
    U_n  & = &
    (1-\eta_n)^{1/2}
    \left(1 - \frac{\eta_nZ_n^2}{1-\eta_n+\eta_n Z_n^2}\right)^{1/2} \\
    &\ge& (1-\eta_n)^{1/2}
    \left(1 - \frac{\eta_n^{1-\gamma}}{1-\eta_n}\right)^{1/2} \\
    &\ge& (1-\eta_n^{1-\gamma})^{1/2}
    \left(\frac{1 - 2\eta_n^{1-\gamma}}{1-\eta_n}\right)^{1/2}
    \ge 1 - c_1\eta_n^{1-\gamma}
\end{eqnarray*}
where $c_1\defeq 2\sup_{n\ge n_0} (1-\eta_n)^{-1/2}<\infty$.
Observe also that $U_n \le 1$.

Let $k_0\ge 1$ be from Lemma \ref{lemma:hit-tube} applied with
$N = \sqrt{8C + 1}+1$,
$\delta_1=1/8$ and
$\delta_2=\epsilon$, and fix $k\ge k_0+1$.
Let $n\ge n_0$ and
define an auxiliary process $(\tilde{Z}_j^{(n)})_{j\ge n_0-1}$ as
$\tilde{Z}_{j}^{(n)} \equiv Z_j$ for $n_0-1\le j\le n+1$, and for $j>n+1$ through
\[
    \tilde{Z}_{j}^{(n)} = Z_{n+1} + \theta \sum_{i=n+2}^j \tilde{W}_i.
\]
For any $n+2\le j\le n+k$ and 
$\omega \in A_{n:j} \defeq \cap_{i=n+1}^{j} \{ Z_i^2\le
  \eta_i^{-\gamma}\}$,
the difference of $\tilde{Z}_j^{(n)}$ and $Z_j$ can be bounded by
\[\begin{split}
    |\tilde{Z}_{j+1}^{(n)} - Z_{j+1}|
    &\le |Z_{j}||1-U_j| + |\tilde{Z}_{j}^{(n)} - Z_{j}|
    \le c_1 \eta_j^{1 - \frac{3}{2}\gamma} + |\tilde{Z}_{j}^{(n)} - Z_{j}| 
    \le \cdots \\
    &\le c_1\sum_{i=n+1}^j \eta_i^{1-\frac{3}{2}\gamma}
    \le c_1\eta_{n}^{1-\frac{3}{2}\gamma}\sum_{i=n+1}^j
    \left(\frac{\eta_i}{\eta_{n}}\right)^{1-\frac{3}{2}\gamma}
    \le c_2 (j-n)\eta_{n}^{1-\frac{3}{2}\gamma}
\end{split}\]
by Assumption \ref{a:adapt-nonosc}.
Therefore, for sufficiently large $n\ge n_0$, the inequality
$|\tilde{Z}_{j}^{(n)} - Z_{j}| \le 1$ holds for all $n\le j\le n+k$
and $\omega\in A_{n:n+k}$. Now, if $\omega\in A_{n:n+k}$, the following bound holds
\[
    \begin{split}
    \log&\left[1 + \eta_j(Z_j^2 - 1)\right]
    \ge
    \log\left[1 + \eta_j(\min\{N,|Z_j|\}^2 - 1)\right]\\
    &\ge \charfun{\{|\tilde{Z}_j^{(n)}|> N\}}\log\left[1 + \eta_j((N-1)^2-1)\right] 
    + \charfun{\{|\tilde{Z}_j^{(n)}|\le N\}}\log\left[1-\eta_j\right] \\
    &\ge \charfun{\{|\tilde{Z}_j^{(n)}|> N\}}(1-\beta_j) \eta_j 8C - 
    \charfun{\{|\tilde{Z}_j^{(n)}|\le N\}}(1+\beta_j)\eta_j
    \end{split}
\]
by the mean value theorem, where the constant 
$\beta_j=\beta_j(C,\eta_j)\in(0,1)$ can be selected arbitrarily small
whenever $j$ is sufficiently large. Using this estimate, one can write
for $\omega\in A_{n:n+k}$
\[
\sum_{j=1}^{k} \log\left[1 + \eta_{n+j}\left(Z_{n+j}^2 - 1\right)\right] 
\ge (1-\beta_n)\sum_{j\in I_{n+1:k}^+} \eta_{n+j}8C
- (1+\beta_n)\sum_{j=1}^k  \eta_{n+j} \\
\]
where
$I_{n+1:k}^+ \defeq \{j\in[1,k]: \tilde{Z}_{n+j}^{(n)}> N\}$.
Define the sets 
\[
    B_{n,k} \defeq \left\{\frac{1}{k-1}\sum_{j=1}^{k-1}
  \charfun{\{|\tilde{Z}_{n+j+1}|\le N\}}\le \delta_1\right\}. 
\]
Within $B_{n,k}$, it clearly holds that $\# I_{n+1:k}^+ \ge k - 1 -
(k-1)\delta_1 = 7(k-1)/8$. 
Thereby, for all $\omega\in
B_{n,k}\cap A_{n:n+k}$
\begin{multline*}
\sum_{j=1}^{k} \log\left[1 + \eta_{n+j}\left(Z_{n+j}^2 - 1\right)\right]
\\
 \ge \eta_n k \left[ (1-\beta_n)
 \frac{7}{2}
 \left(\inf_{1\le j\le k}\frac{\eta_{n+j}}{\eta_n}\right)
 C   
 - (1+\beta_n) 
 \left(\sup_{1\le j\le k} \frac{\eta_{n+j}}{\eta_n} \right)
\right] \ge k C \eta_n
\end{multline*}
for sufficiently large $n\ge 1$, as then the constant $\beta_n$ can be
chosen small enough, and by Assumption \ref{a:adapt-nonosc}.
In other words, if $n\ge 1$ is sufficiently large, then 
$B_{n,k}\cap A_{n:n+k}\cap L_{n,k} = \emptyset$.

Let us then write the conditional expectation of interest in parts,
\begin{equation}
    \begin{split}
    \Pcond{L_{n,k}}{\F_n}
    &= \Pcond{L_{n,k}, A_{n:n+k}}{\F_n}
       + \Pcond{L_{n,k}, A'_n}{\F_n} \\
    &\phantom{=}+ \sum_{i=n+1}^{n+k} \Pcond{L_{n,k}, A_{n:i-1}, A'_i}{\F_n}
    \end{split}
    \label{eq:l-estim}
\end{equation}
where $A'_i \defeq \{ Z_i^2>\eta_i^{-\gamma}\}$.
Let $\omega\in A'_i$ for any $n<i\le n+k$ and compute
\[\begin{split}
    \log\left[1 + \eta_i(Z_i^2 - 1)\right]
    &\ge \log\left[1 + \eta_i(\eta_i^{-\gamma} - 1)\right]
    \ge \log\left[1 + 2\eta_i kC\right] \\
    &\ge \frac{2\eta_ikC}{1+2\eta_ikC}
    \ge k C \eta_n
    \end{split}
\]
whenever $n\ge n_0$ is sufficiently large, since $\eta_n\to 0$,
and by Assumption \ref{a:adapt-nonosc}.
That is, if $n$ is sufficiently large, 
all but the first term in the right hand side of 
\eqref{eq:l-estim} are a.s.~zero.
It remains to show the inequality for the first, for which the estimate
\[\begin{split}
    \Pcond{L_{n,k}, A_{n:n+k}}{\F_n}
    &=\Pcond{L_{n,k}, A_{n:n+k}, B_{n,k}}{\F_n} \\
    &\phantom{=}+
    \Pcond{L_{n,k}, A_{n:n+k}, \smash{B_{n,k}^\complement}}{\F_n} \\
    &\le \Pcond{\smash{B_{n,k}^\complement}}{\F_n} \le \epsilon
\end{split}\]
holds by Lemma \ref{lemma:hit-tube},
concluding the proof.
\end{proof}

Using the estimate of Lemma \ref{lemma:drift-bound}, it is relatively easy 
to show that the eigenvalues of $S_n$ tend to infinity, if the adaptation
weights satisfy an additional assumption.
\begin{assumption}
    \label{a:adapt-ell2-not-ell1} 
    The adaptation weight sequence $(\eta_n)_{n\ge 2}\subset(0,1)$ 
    is in $\ell^2$ but not in $\ell^1$,
    that is,
    \[
        \sum_{n=2}^\infty \eta_n = \infty 
        \quad\text{and}\quad 
        \sum_{n=2}^\infty \eta_n^2 < \infty.
    \]
\end{assumption}
\begin{theorem} 
    \label{thm:unif-path} 
    Assume that $(X_n)_{n\ge 2}$ follows the `adaptive random walk'
    recursion \eqref{eq:adaptive-random-walk} and 
    the adaptation weights $(\eta_n)_{n\ge 2}$ 
    satisfy Assumptions \ref{a:adapt-nonosc}
    and \ref{a:adapt-ell2-not-ell1}.
    Then, for any unit vector $u\in\R^d$, the process $u^T S_n u \to \infty$
    almost surely. 
\end{theorem}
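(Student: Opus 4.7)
The plan is to analyze $T_n \defeq \log(u^T S_n u)$, whose increments telescope by \eqref{eq:z-char-s} to
\[
    T_{n+k} - T_n = \sum_{j=1}^{k} \log\bigl[1 + \eta_{n+j}(Z_{n+j}^2 - 1)\bigr].
\]
Lemma \ref{lemma:drift-bound} may thus be read as asserting that each block-wise increment is at least $kC\eta_n$ with conditional probability at least $1-\epsilon$, uniformly in the past. I would first prove $T_n \to \infty$ along a subsequence of block endpoints and then fill in the gaps with a deterministic bound.

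Fix $C \ge 1$ and $\epsilon \in (0, C/(C+2))$, and obtain $k$ and $n_1$ from Lemma \ref{lemma:drift-bound}. Partition the integers past $n_1$ into consecutive blocks of length $k$ and denote the \emph{success} event of the $i$-th block by $S_i \defeq L_{n_1+(i-1)k,\,k}^{\complement}$, so that $\Pcond{S_i^{\complement}}{\F_{n_1+(i-1)k}} \le \epsilon$ a.s. The deterministic estimate $\log[1 + \eta_j(Z_j^2 - 1)] \ge \log(1-\eta_j) \ge -2\eta_j$, together with the uniform convergence $\eta_{n+j}/\eta_n \to 1$ for $1 \le j \le k$ guaranteed by Assumption \ref{a:adapt-nonosc}, shows that for $n_1$ large enough a failed block loses at most $2(1+o(1))k\eta_{n_1+(i-1)k}$. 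Consequently,
\[
    T_{n_1+Nk} - T_{n_1} \ge \sum_{i=1}^{N} k\eta_{n_1+(i-1)k}\bigl[C\,\charfun{S_i} - 2(1+o(1))\,\charfun{S_i^{\complement}}\bigr].
\]

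The key step is to convert the conditional bound on $S_i^{\complement}$ into an almost sure bound via a martingale argument that exploits the $\ell^2$ part of Assumption \ref{a:adapt-ell2-not-ell1}. Consider
\[
    M_N \defeq \sum_{i=1}^{N} k\eta_{n_1+(i-1)k}\bigl(\charfun{S_i^{\complement}} - \Pcond{S_i^{\complement}}{\F_{n_1+(i-1)k}}\bigr),
\]
a zero-mean martingale whose increments are bounded by $k\eta_{n_1+(i-1)k}$ and whose total conditional quadratic variation is therefore at most $k^2 \sum_n \eta_n^2 < \infty$. By the $L^2$ martingale convergence theorem, $M_N$ converges a.s., and so
\[
    \sum_{i=1}^{N} k\eta_{n_1+(i-1)k}\,\charfun{S_i^{\complement}} \le \epsilon \sum_{i=1}^{N} k\eta_{n_1+(i-1)k} + O(1)
\]
almost surely. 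Inserting this into the previous display yields
\[
    T_{n_1+Nk} - T_{n_1} \ge \bigl[C - (C+2)\epsilon - o(1)\bigr] \sum_{i=1}^{N} k\eta_{n_1+(i-1)k} - O(1),
\]
and the right-hand side diverges to $+\infty$ a.s.\ since the coefficient is ultimately positive by choice of $\epsilon$ and since $\sum_n \eta_n = \infty$ together with Assumption \ref{a:adapt-nonosc} makes $\sum_i k\eta_{n_1+(i-1)k}$ diverge. The deterministic bound $T_{n+1} - T_n \ge \log(1-\eta_{n+1}) \ge -2\eta_{n+1}\to 0$ then controls the oscillation within a single block, promoting convergence along the subsequence $\{n_1+Nk\}$ to convergence along the full sequence.

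I expect the main obstacle to be verifying that both summability conditions on $(\eta_n)$ play their distinct roles cleanly: $\sum \eta_n = \infty$ ensures that the drift accumulated on successful blocks is unbounded, while $\sum \eta_n^2 < \infty$ is precisely what makes $M_N$ an $L^2$-bounded martingale and thereby transforms the purely conditional control provided by Lemma \ref{lemma:drift-bound} into an almost sure statement.
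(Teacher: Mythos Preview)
Your argument is correct and follows essentially the same route as the paper: invoke Lemma~\ref{lemma:drift-bound} on consecutive blocks of length $k$, use the $\ell^2$ hypothesis to obtain almost-sure convergence of an $L^2$ martingale built from the block data, and combine this with the $\ell^1$ divergence to force $\sum_j \log[1+\eta_j(Z_j^2-1)]\to\infty$, filling in intermediate indices by the trivial bound $\log(1-\eta_j)$. The only cosmetic difference is that the paper builds its martingale from the \emph{truncated} block increments $T_i = \min\{kC\eta_{\ell_{i-1}},\sum_j\log[1+\eta_j(Z_j^2-1)]\}$ (the upper truncation being what makes the differences $O(\eta_{\ell_{i-1}})$), whereas you build yours from the failure indicators $\charfun{S_i^\complement}$, which are bounded automatically; both devices serve the same purpose and the remainder of the argument is identical.
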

\begin{proof} 
    The proof is based on the estimate of Lemma \ref{lemma:drift-bound}
    applied with a similar martingale argument as in
    \cite{vihola-asm}.
    
    Let $k\ge 2$ be from Lemma \ref{lemma:drift-bound} applied with $C=4$ and
    $\epsilon = 1/2$. Denote $\ell_i \defeq ki + 1$ for $i\ge 0$ and,
    inspired by \eqref{eq:z-char-s},
    define the random variables $(T_i)_{i\ge 1}$ by
    \[
        T_i \defeq \min\Bigg\{kM\eta_{\ell_{i-1}},
          \sum_{j=\ell_{i-1}+1}^{\ell_{i}} 
           \log\left[1 + \eta_{j}\left(Z_{j}^2 - 1\right)\right]
  \Bigg\}
    \]
    with the convention that $\eta_0 = 1$.
    Form a martingale $(Y_i, \G_i)_{i\ge 1}$ with $Y_1\equiv 0$ and having
    differences $\ud Y_i \defeq T_i - \Econd{T_i}{\G_{i-1}}$ and where
    $\G_1 \equiv \{\emptyset,\Omega\}$ and $\G_i \defeq \F_{\ell_i}$ for $i\ge 1$.
    By Assumption \ref{a:adapt-ell2-not-ell1},
    \[
        \sum_{i=2}^\infty \Eop{\ud Y_i^2} \le 
        c \sum_{i=1}^\infty \eta_{\ell_i}^2
        < \infty
    \]
    with a constant $c=c(k,C)>0$, 
    so $Y_i$ is a $L^2$-martingale and converges a.s.\ to a finite limit $M_\infty$
    \cite[e.g.][Theorem 2.15]{hall-heyde}.
    
    By Lemma \ref{lemma:drift-bound}, the conditional expectation satisfies
    \[
        \Econd{T_{i+1}}{\mathcal{G}_{i}} \ge
        k C \eta_{\ell_i}(1-\epsilon)
        + \sum_{j=\ell_i+1}^{\ell_{i+1}} \log(1-\eta_{j})\epsilon
        \ge k\eta_{\ell_i}
    \]
    when $i$ is large enough, and where the second inequality is due to 
    Assumption \ref{a:adapt-nonosc}.
    This implies, with Assumption
    \ref{a:adapt-ell2-not-ell1}, that
    $\sum_{i} \Econd{T_i}{\mathcal{G}_{i-1}} = \infty$ a.s., and since
    $Y_i$ converges a.s.~to a finite limit, it holds that 
    $\sum_i T_i = \infty$ a.s.

    By \eqref{eq:z-char-s}, one may estimate for any $n = \ell_m$ with $m\ge 1$ that
    \[
        \log(u^T S_n u) \ge \log(u^T S_1 u) + \sum_{i=1}^m T_i
        \to \infty
    \]
    as $m\to\infty$.
    Simple deterministic estimates
    conclude the proof for the intermediate values of $n$.
\end{proof}


\subsection{Stability with One-Dimensional Uniformly Continuous Log-Density}
\label{sec:example} 

In this section, the above analysis of the `adaptive random walk' is
extended to imply that $\liminf_{n\to\infty} S_n>0$ 
for the one-dimensional AM algorithm, assuming $\log\pi$ uniformly
continuous. The result follows similarly as in Theorem
\ref{thm:unif-path}, by coupling the AM process with the `adaptive random
walk' whenever $S_n$ is smaller than some constant $\mu>0$.
\begin{theorem} 
    \label{thm:one-d-bound} 
    Assume $d=1$ and $\log\pi$ is uniformly continuous, and that
    the adaptation weights $(\eta_n)_{n\ge 2}$ satisfy Assumptions
    \ref{a:adapt-nonosc} and \ref{a:adapt-ell2-not-ell1}.
    Then, there is 
    a constant $b>0$ such that $\liminf_{n\to\infty} S_n \ge b$.
\end{theorem}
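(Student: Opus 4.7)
The plan is to transfer the drift-and-martingale argument from Theorem \ref{thm:unif-path} to the AM process by coupling it with the adaptive random walk \eqref{eq:adaptive-random-walk} whenever $S_n$ lies below a carefully chosen threshold $\mu>0$. Uniform continuity of $\log\pi$ will ensure this coupling is effective precisely in the regime where $S_n$ is small, which is the regime to be controlled.

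First I would extract the coupling: given $\delta>0$, uniform continuity yields $r>0$ with $|y-x|\le r$ implying $\pi(y)/\pi(x)\ge e^{-\delta}$, so the Metropolis acceptance probability exceeds $1-\delta$. Fixing a truncation level $R$ with $\P(|W_1|>R)\le\delta$ and then $\mu>0$ small enough that $\theta\sqrt{\mu}\,R\le r$, I would conclude that on the event $\{S_n\le\mu,\,|W_{n+1}|\le R\}$ the proposal $Y_{n+1}$ lies within distance $r$ of $X_n$ and so is accepted with conditional probability at least $1-\delta$. Consequently, over any window of $k$ steps during which $S_j\le\mu$ throughout, the AM trajectory coincides with the corresponding adaptive random walk trajectory with conditional probability at least $1-2k\delta$.

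Next I would rerun the proof of Lemma \ref{lemma:drift-bound} restricted to $\{S_n\le\mu\}$: on the good coupling sub-event the random-walk recursion \eqref{eq:z-almost-random-walk} holds exactly, so the conclusion applies verbatim, while on the bad sub-event the deterministic lower bound $\log[1+\eta_{n+j}(Z_{n+j}^2-1)]\ge\log(1-\eta_{n+j})$ costs only $O(k\eta_n)$, which can be absorbed by slightly inflating the constant $C$ and shrinking $\delta$. The output is a localized estimate $\Pcond{L_{n,k}}{\F_n}\charfun{\{S_n\le\mu\}}\le\epsilon$ for all $n$ sufficiently large and a suitable $k$. I would then mimic the martingale construction of Theorem \ref{thm:unif-path} with $T_i$ replaced by $\tilde T_i\defeq T_i\,\charfun{\{S_{\ell_{i-1}}\le\mu\}}$: the residuals $\tilde T_i - \Econd{\tilde T_i}{\G_{i-1}}$ still form an $L^2$-bounded martingale by Assumption \ref{a:adapt-ell2-not-ell1}, hence converge a.s., while on the hypothetical event $\{\liminf_n S_n\le b\}$ for some $b\in(0,\mu)$, the bound $\Econd{\tilde T_i}{\G_{i-1}}\ge k\eta_{\ell_i}$ applies along the subsequence of block indices $i$ with $S_{\ell_{i-1}}\le\mu$, and by Assumption \ref{a:adapt-ell2-not-ell1} the resulting drifts accumulate to infinity. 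Via the telescoping identity implicit in \eqref{eq:z-char-s}, this would push $\log S_n$ beyond $\log\mu$ along the corresponding subsequence of block times, contradicting the assumed recurrence of $S_n$ below $b$.

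The main obstacle will be this final step: carefully coordinating the block boundaries $\ell_i$, the localization indicator $\charfun{\{S_{\ell_{i-1}}\le\mu\}}$, and the telescoping upper bound on $\log S_n$ so that the divergence of the localized drifts genuinely contradicts the recurrence of $S_n$ below $b$, despite the excursions of $S_n$ above $\mu$ where the drift estimate is unavailable. This is essentially bookkeeping, but of a more delicate kind than in Theorem \ref{thm:unif-path}, where the drift held unconditionally and the telescoping argument closed almost trivially.
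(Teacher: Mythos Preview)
Your proposal is correct and follows the paper's route closely: coupling via uniform continuity of $\log\pi$, a localized version of Lemma~\ref{lemma:drift-bound} on the coupled random walk, and a martingale-plus-excursion argument on block increments $T_i\charfun{\{S_{\ell_{i-1}}\text{ small}\}}$. The specific device the paper uses to resolve the obstacle you flag is a two-threshold trick: localize with the indicator $\{S_{\ell_{i-1}}<\mu/2\}$ while the coupling is valid on $\{S_j<\mu\ \forall j\text{ in the block}\}$; if $S$ escapes $(0,\mu)$ within a block that started below $\mu/2$, the jump in $\log S$ alone already dominates $kC\eta_{\ell_{i-1}}$ for large $i$, so the drift estimate $\Econd{T_i}{\G_{i-1}}\ge k\eta_{\ell_{i-1}}$ holds in either case. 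With this in hand the paper obtains a direct lower bound $\liminf S_{\ell_m}\ge\mu/4$ via entry-time stopping times (your $\sigma_n$) and the martingale oscillation bound, rather than arguing by contradiction.
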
 
\begin{proof} 
    Fix a $\delta\in(0,1)$.
    Due to the uniform continuity of $\log\pi$, there is a
    $\tilde{\delta}>0$ such that
    \[
        \log \pi(y) - \log \pi(x) \ge \frac{1}{2}\log\left(1 -
        \frac{\delta}{2}\right)
    \]
    for all $|x-y|\le \tilde{\delta}_1$. Choose 
    $\tilde{M}>0$ sufficiently large so that 
    $\int_{\{|z|\le \tilde{M}\}} \tilde{q}(z) \ud
    z \ge
    \sqrt{1-\delta/2}$. Denote by
    \[
        Q_q(x,A) \defeq \int_A q(y-x) \ud y
    \]
    the random walk transition kernel with increment distribution $q$, and observe
    that the `adaptive random walk' recursion \eqref{eq:adaptive-random-walk} 
    can be written as ``$X_{n+1} \sim Q_{\tilde{q}_{S_n}}(X_n,\uarg)$.'' 
    For any $x\in\R^d$ and measurable $A\subset\R^d$
    \[
        \begin{split}
        |Q_{\tilde{q}_s}(x,A) - P_{\tilde{q}_s}(x,A)| &\le 2 \left[ 1 -
        \int\min\left\{1,\frac{\pi(y)}{\pi(x)}\right\} \tilde{q}_s(y-x) \ud y \right]
        \\
        & \le 
        2 \left[ 1 -
        \int_{\{|z|\le \tilde{M}\}} 
        \min\left\{1,\frac{\pi(x+\smash{\sqrt{\theta s}}z)}{\pi(x)}\right\}
         \tilde{q}(z)
        \ud z \right].
        \end{split}
    \]
    Now, $|Q_{\tilde{q}_s}(x,A) - P_{\tilde{q}_s}(x,A)\| \le \delta$
    whenever $\sqrt{\theta s} z \le \tilde{\delta_1}$ for all $|z|\le
    \tilde{M}$. In other words, there exists a
    $\mu=\mu(\delta)>0$ such that whenever $s<\mu$, the total variation norm 
    $\|Q_{\tilde{q}_s}(x,\cdot) - P_{\tilde{q}_s}(x,\cdot)\| \le \delta$.
    
    Let $n,k\ge 1$ and
    define the random variables 
    $(\tilde{X}_{j}^{(n)},\tilde{M}_j^{(n)},\tilde{S}_j^{(n)})_{j\in[n,n+k]}$
    by setting 
    $(\tilde{X}_{n}^{(n)}, \tilde{M}_{n}^{(n)},
    \tilde{S}_{n}^{(n)} ) \equiv (X_{n},
    M_{n},S_{n})$
    and 
    \begin{eqnarray*}
        \tilde{X}_{j+1}^{(n)} &\sim&
        Q_{\tilde{q}_{\tilde{S}_j^{(n)}}}(\tilde{X}_j^{(n)},\uarg),\\
        \tilde{M}_{j+1}^{(n)} &\defeq&
        (1-\eta_{j+1})\tilde{M}_j^{(n)} + \eta_{j+1}
        \tilde{X}_{j+1}^{(n)} \quad\text{and} \\
        \tilde{S}_{j+1}^{(n)} &\defeq&
        (1-\eta_{j+1})\tilde{S}_j^{(n)} + \eta_{j+1}
        (\tilde{X}_{j+1}^{(n)}-\tilde{M}_j^{(n)})^2
    \end{eqnarray*}
    for $j+1\in[n+1,n+k]$.
    The variable $\tilde{X}_{n+1}^{(n)}$ can be selected so that
    $\P(\tilde{X}_{n+1}^{(n)}
      = X_{n+1} \mid \F_n) = 1 - \|
P_{\tilde{q}_{S_n}}(X_n,\uarg)-Q_{\tilde{q}_{\tilde{S}_n^{(n)}}}(\tilde{X}_n^{(n)},\uarg)\|$; 
    see Theorem \ref{thm:coupling} in Appendix \ref{sec:coupling}.
    Consequently,
    $\P(\tilde{X}_{n+1}^{(n)} \neq X_{n+1},\,S_n<
        \mu\mid\F_n) \le
    \delta$.
    By the same argument, $\tilde{X}_{n+2}^{(n)}$ can be chosen so that
    \[
        \P\big(\tilde{X}_{n+2}^{(n)} \neq X_{n+2},\,
        \tilde{X}_{n+1}^{(n)} = X_{n+1},\, S_{n+1}<\mu
        \;\big|\;\sigma(\F_{n+1},\tilde{X}_{n+1}^{(n)})\big) \le \delta
    \]
    since if $\tilde{X}_{n+1}^{(n)} = X_{n+1}$, then also
    $\tilde{S}_{n+1}^{(n)} = S_{n+1}$. This implies
    \begin{equation*}
        \P\big(\{\tilde{X}_{n+2}^{(n)} \neq X_{n+2}\} \cup
        \{\tilde{X}_{n+1}^{(n)} \neq X_{n+1}\}\cap
        B_{n:n+2}\;\big|\;\F_n\big) \le
        2\delta
    \end{equation*}
    where $B_{n:j} \defeq \cap_{i=n}^{j-1} \{S_i<\mu\}$ for $j>n$.
    The same argument can be repeated 
    to construct $(\tilde{X}_j^{(n)})_{j\in[n,n+k]}$ 
    so that 
    \begin{equation}
        \Pcond{ D_{n:n+k} }{\F_{n}} \ge 1 - k\delta
        \label{eq:d-est}
    \end{equation}
    where $D_{n:n+k} \defeq \bigcap_{j=n}^{n+k} 
    \{\tilde{X}_j^{(n)} = X_j \} \cup B_{n:n+k}^\complement$.

    Apply Lemma \ref{lemma:drift-bound} 
    with $C=18$ and
    $\epsilon=1/6$ to obtain $k\ge 1$, and fix $\delta=\epsilon/k$.
    Denote $\ell_i \defeq ik +1$ for any $i\ge 0$, and
    define the random variables $(T_i)_{i\ge 1}$ by
    \begin{equation}
        T_{i} \defeq 
\charfun{\{S_{\ell_{i-1}}<\mu/2\}}
        \min\Bigg\{kM\eta_{\ell_{i-1}},
          \sum_{j=\ell_{i-1}+1}^{\ell_{i}} 
           \log\left[1 + \eta_{j}\left(Z_{j}^2 - 1\right)\right]    
  \Bigg\}
    \label{eq:ti-def}
    \end{equation}
    where $Z_j$ are defined as
    \eqref{eq:z-def}. 

    Define also $\tilde{T}_i$ similarly as $T_i$, but having 
    $\tilde{Z}_{j}^{(\ell_{i-1})}$ with $j\in[\ell_{i-1}+1,\ell_{i}]$ in the
    right hand side of \eqref{eq:ti-def}, defined as
    $\tilde{Z}_{\ell_{i-1}}^{(\ell_{i-1})} \equiv Z_{\ell_{i-1}}$ and by
    \[
    \tilde{Z}_{j}^{(\ell_{i-1})} \defeq \big(
\tilde{X}_{j}^{(\ell_{i-1})}-\tilde{M}_{j-1}^{(\ell_{i-1})}\big)\Big/
\sqrt{\tilde{S}_{j-1}^{(\ell_{i-1})}}.
    \]
    for $j\in[\ell_{i-1}+1,\ell_{i}]$.
    Notice that $T_{i}$ coincides
    with $\tilde{T}_{i}$ in $B_{\ell_{i-1}:\ell_{i}}\cap
    D_{\ell_{i-1}:\ell_{i}}$. Observe also that
    $\tilde{X}_{j}^{(\ell_{i-1})}$ follows the `adaptive random
    walk' equation \eqref{eq:adaptive-random-walk} for
    $j\in[\ell_{i-1}+1,\ell_{i}]$, and
    hence $\tilde{Z}_{j}^{(\ell_{i-1})}$ follows
    \eqref{eq:z-almost-random-walk}. 
    Consequently, denoting $\G_i \defeq F_{\ell_i}$,
    Lemma \ref{lemma:drift-bound} guarantees that
    \begin{equation}
        \Pcond{L_{\ell_{i-1},k}}{\G_i} \le \epsilon
        \label{eq:l-est}
    \end{equation}
    where $L_{\ell_{i-1},k} \defeq
    \{\tilde{T}_{i} < k M \eta_{\ell_{i-1}}\}$.
    
    Let us show next that whenever $S_{\ell_{i-1}}$ is small, 
    the variable $T_i$ is expected to have a positive value proportional to
    the adaptation weight,
    \begin{equation}
        \Econd{T_{i}}{\G_{i-1}}\charfun{\{S_{\ell_{i-1}}<\mu/2\}}
    \ge k\eta_{\ell_{i-1}}\charfun{\{S_{\ell_{i-1}}<\mu/2\}}
    \label{eq:t-est}
    \end{equation} 
    almost surely
    for any sufficiently large $i\ge 1$.
    Write first
    \begin{multline*}
        \Econd{T_{i}}{\G_{i-1}}\charfun{\{S_{\ell_{i-1}}<\mu/2\}}
        = \Econd{
          (\charfun{B_{\ell_{i-1}:\ell_{i}}^\complement} +
          \charfun{B_{\ell_{i-1}:\ell_{i}}})
          T_{i}}{\G_{i-1}}\charfun{\{S_{\ell_{i-1}}<\mu/2\}} \\
         \ge \Econd{
\charfun{B_{\ell_{i-1}:\ell_{i}}^\complement}\min\left\{kC\eta_{\ell_{i-1}},\frac{\mu}{2}+
                    \xi_{i}\right\}
                    + \charfun{B_{\ell_{i-1}:\ell_{i}}} \xi_{i}
                    }{\G_{i-1}}
                  \charfun{\{S_{\ell_{i-1}}<\mu/2\}}
     \end{multline*}
where the lower bound $\xi_{i}$ of $T_{i}$ is given as
    \[
        \xi_{i} \defeq \sum_{j=\ell_{i-1}+1}^{\ell_{i}} \log(1-\eta_{j}).
    \]
By Assumption \ref{a:adapt-nonosc}, $\xi_{i} \ge - 2k\eta_{\ell_{i-1}} \ge
-\mu/4$ for any sufficiently large $i$.  Therefore, whenever
$\Pcond{B_{\ell_{i-1}:\ell_{i}}^\complement}{\G_{i-1}} \ge \epsilon = 3/C$,
it holds that
    \[
        \Econd{T_{i}}{\G_{i-1}}\charfun{\{S_{\ell_{i-1}}<\mu/2\}}
        \ge k\eta_{\ell_{i-1}} \charfun{\{S_{\ell_{i-1}}<\mu/2\}}
    \]
for any sufficiently large $i$.
On the other hand, if $\Pcond{B_{\ell_{i-1}:\ell_{i}}^\complement}{\G_{i-1}}
\le \epsilon$, then by defining
\[
    E_i \defeq B_{\ell_{i-1}:\ell_{i}}^\complement \cup
    D_{\ell_{i-1}:\ell_{i}}^\complement \cup L_{\ell_{i-1},k}
\]
one has by \eqref{eq:d-est} and \eqref{eq:l-est} that $\P(E_i)\le 3\epsilon$,
and consequently
    \[\begin{split}
        \Econd{T_{i}}{\G_{i-1}}
        &\ge
        \Pcond{ E_i^\complement}{\G_{i-1}}\xi_{i} 
        + \Econd{ \charfun{E_i}
          \tilde{T}_{i}}{\G_{i-1}} \\
        &\ge
        3\epsilon \xi_{i} + (1-3\epsilon)kC\eta_{\ell_{i-1}}
        \ge k\eta_{\ell_{i-1}}.
        \end{split}
    \]
This establishes \eqref{eq:t-est}.

Define the stopping times $\tau_1 \equiv 1$ and for $n\ge 2$ through
$\tau_n \defeq \inf\{i>\tau_{n-1}: S_{\ell_{i-1}}\ge \mu/2,\, S_{\ell_i}<\mu/2\}$
with the convention that $\inf\emptyset = \infty$.
That is, $\tau_i$ record the times when $S_{\ell_i}$
enters $(0,\mu/2]$.  Using $\tau_i$, define the latest such time up to $n$ by
$\sigma_n \defeq \sup\{\tau_i: i\ge 1,\,\tau_i \le n\}$.
As in Theorem
\ref{thm:unif-path}, define the almost surely converging 
martingale $(Y_i, \G_i)_{i\ge 1}$ 
with $Y_1\equiv 0$ and having the 
differences $\ud  Y_i \defeq (T_i - \Econd{T_i}{\G_{i-1}})$
for $i\ge 2$.

It is sufficient to show that $\liminf_{i\to\infty}
S_{\ell_i} \ge b \defeq \mu/4>0$ almost surely.
If there is a finite $i_0\ge 1$ such that $S_{\ell_i} \ge \mu/2$ for all
$i\ge i_0$, the claim is trivial.
Let us consider for the rest of the proof the case 
that $\{S_{\ell_i} < \mu/2\}$ happens for infinitely many indices $i\ge 1$.

For any $m\ge 2$ such that $S_{\ell_m} < \mu/2$,
one can write
\begin{equation}
    \begin{split}
    \log S_{\ell_m} &\ge \log S_{\ell_{\sigma_m} } + \sum_{i=\sigma_m+1}^m T_i \\
    &\ge \log S_{\ell_{\sigma_m}} + (Y_m-Y_{\sigma_m}) + \sum_{i=\sigma_m+1}^m
    k\eta_{\ell_{i-1} }
    \end{split}
    \label{eq:s-ineq}
\end{equation}
since then $S_{\ell_i}<\mu/2$ for all $i\in[\sigma_m,m-1]$
and hence also $\Econd{T_{i}}{\G_{i-1}} \ge k\eta_{\ell_{i-1}}$.

Suppose for a moment that there is a positive probability that 
$S_{\ell_m}$ stays within $(0,\mu/2)$ indefinitely, starting from some
index $m_1\ge 1$. 
Then, there is an infinite $\tau_i$ and consequently
$\sigma_m\le \sigma<\infty$ for all $m\ge 1$. But as $Y_m$ converges,
$|Y_m-Y_{\sigma_m}|$ is a.s.~finite, and since $\sum_m \eta_{\ell_m} = \infty$
by Assumptions \ref{a:adapt-nonosc} and \ref{a:adapt-ell2-not-ell1},
the inequality \eqref{eq:s-ineq} implies that $S_{\ell_m} \ge \mu/2$ for
sufficiently large $m$, which is a contradiction. That is, the stopping
times $\tau_i$ for all $i\ge 1$ must be a.s.~finite, 
whenever $S_{\ell_m}<\mu/2$ for infinitely many indices $m\ge 1$.

For the rest of the proof, suppose $S_{\ell_m}<\mu/2$ for infinitely many
indices $m\ge 1$.
Observe that since
$Y_m\to Y_\infty$, there exists an a.s.~finite index $m_2$ so that $Y_m-Y_\infty
\ge -1/2\log 2$ for all $m\ge m_2$. As $\eta_n\to 0$ and $\sigma_m\to\infty$, 
there is an a.s.~finite $m_3$ such that
$\xi_{\sigma_{m-1}} \ge -1/2 \log 2$ for all $m\ge m_3$. For all $m\ge
\max\{m_2,m_3\}$ and whenever $S_{\ell_m}< \mu/2$,
it thereby holds that
\begin{equation*}
    \begin{split}
    \log S_{\ell_m} 
    &\ge \log S_{\ell_{\sigma_{m}}} - (Y_m - Y_{\sigma_m})
    \ge \log S_{\ell_{\sigma_{m-1}}} + \xi_{\sigma_m}
    - \frac{1}{2}\log 2 \\
    &\ge \log \frac{\mu}{2} - \log 2 = \log b.
    \end{split}
\end{equation*}
The case $S_{\ell_m}\ge \mu/2$ trivially satisfies the above estimate,
concluding the proof.
\end{proof} 
As a consequence of Theorem \ref{thm:one-d-bound}, one can establish
a strong law of large numbers for the unconstrained AM algorithm running
with a Laplace target distribution.
Essentially, the only ingredient that needs to be checked is that the
simultaneous geometric ergodicity condition holds. This is 
verified in the next lemma, whose proof is given in Appendix
\ref{sec:proof-exp-geombound}.
\begin{lemma} 
    \label{lemma:exp-geombound} 
    Suppose that
    the template proposal distribution $\tilde{q}$ is everywhere positive
    and non-increasing away from the origin: $\tilde{q}(z)\ge \tilde{q}(w)$ for all 
    $|z|\le|w|$.
    Suppose also that $\pi(x) \defeq 
    \frac{1}{2b}\exp\left(-\frac{|x-m|}{b}\right)$ with a mean $m\in\R$
    and a scale $b>0$.
    Then, 
    for all $L>0$, there are positive constants $M,b$
such that the following drift and minorisation condition are satisfied
for all $s\ge L$ and measurable $A\subset \R$
\begin{align}
    P_s V(x) &\le \lambda_s V(x)  + b \charfun{C}(x), 
    & \forall x\in\R\label{eq:drift-ineq} \\
    P_s(x,A) &\ge \delta_s \nu(A), & \forall x\in C\label{eq:mino-ineq}
\end{align}
where $V:\R\to[1,\infty)$ is defined as
$V(x) \defeq (\sup_z \pi(z))^{1/2} \pi^{-1/2}(x)$, 
the set $C \defeq [m-M,m+M]$, the probability measure $\mu$ is 
concentrated on $C$ and $P_s V(x) \defeq \int V(y) P_s(x,\ud y)$. 
Moreover, $\lambda_s,\delta_s\in(0,1)$ satisfy for all $s\ge L$
\begin{equation}
        \max\{
    (1-\lambda_s)^{-1},
    \delta_s^{-1}\}
    \le c s^{\gamma}
    \label{eq:const-bound}
\end{equation} 
for some constants $c,\gamma>0$ that may depend on $L$.
\end{lemma}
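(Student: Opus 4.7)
The plan is to verify both conditions directly with the drift function $V(x) = e^{|x-m|/(2b)}$, keeping careful track of the $s$-dependence. After an affine change of variable one may reduce to $m = 0$ and $b = 1$, so that $\pi(x) = \frac{1}{2}e^{-|x|}$ and $V(x) = e^{|x|/2} \ge 1$. The minorisation \eqref{eq:mino-ineq} on $C = [-M, M]$ is straightforward: taking $\nu$ to be normalised Lebesgue measure on $C$, one has $\alpha(x,y) \ge e^{-2M}$ for $x, y \in C$, and the monotonicity of $\tilde{q}$ gives $q_s(y-x) \ge (\theta s)^{-1/2}\,\tilde{q}(2M/\sqrt{\theta L})$ whenever $s \ge L$, yielding $\delta_s \ge c_1(L, M)\,s^{-1/2}$.

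For the drift, I start from
\[
  P_s V(x) - V(x) = \int_\R [V(y) - V(x)]\,\alpha(x,y)\,q_s(y-x)\,dy,
\]
assume $x > 0$ by the symmetries $V(-x) = V(x)$, $\pi(-x) = \pi(x)$ and $q_s(-z) = q_s(z)$, and substitute $z = y - x$. The explicit forms $V(y)/V(x) = e^{(|y|-x)/2}$ and $\alpha(x,y) = \min\{1, e^{x-|y|}\}$ break the integrand into four sign-regimes with break-points at $z \in \{-2x, -x, 0\}$. After clean algebraic cancellations driven by the elementary identity $-(1 - e^{-z/2}) + (e^{-z/2} - e^{-z}) = -(1 - e^{-z/2})^2$, one obtains a bound of the form
\[
  \frac{P_s V(x)}{V(x)} - 1 \le -\int_0^{x} (1 - e^{-z/2})^2\,q_s(z)\,dz + \int_x^\infty (e^{-z/2} - e^{-z})\,q_s(z)\,dz + \int_0^\infty (e^{-w/2} - e^{-w})\,q_s(w + 2x)\,dw,
\]
where the two positive terms are the rejection-gain contributions from the far tail and the opposite tail, respectively.

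The main quantitative step is then to exhibit matching upper and lower bounds of order $s^{-1/2}$ for the positive and negative parts, with the negative part strictly dominating. Using $\tilde{q}(u) \le \tilde{q}(0)$ together with the substitution $u = z/\sqrt{\theta s}$ yields $\int_0^\infty (e^{-w/2}-e^{-w})\,q_s(w)\,dw \le \tilde{q}(0)/\sqrt{\theta s}$; the monotonicity of $\tilde{q}$ gives the same bound for the shifted integral; and the first tail integral is in addition dominated by a factor $e^{-x/2}$. For the loss, restricting $z$ to an interval $[z_0, M]$ on which $(1-e^{-z/2})^2$ is bounded below and using $q_s(z) \ge \tilde{q}(M/\sqrt{\theta L})/\sqrt{\theta s}$ there produces a lower bound of the same order. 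Choosing $M$ sufficiently large, depending on $L$ and $\tilde{q}$, so that the loss coefficient strictly exceeds the gain coefficient, yields $1 - \lambda_s \ge c_2(L)\,s^{-1/2}$, which together with the minorisation estimate gives \eqref{eq:const-bound} with $\gamma = 1/2$.

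The main obstacle is the drift analysis itself: the piecewise decomposition into four sign-regimes and the bookkeeping needed to bring out the squared factor $(1-e^{-z/2})^2$ as the principal loss contribution. A further subtlety is that for $s$ close to $L$ the typical per-step displacement $\sqrt{\theta s}$ is small, so the loss integral then scales like $s$ rather than $s^{-1/2}$; this small-$s$ behaviour is not visible in the leading-order asymptotics as $s\to\infty$ and forces the constant $c_2(L)$ to blow up polynomially as $L\to 0$, but is absorbed into the constant $c$ in \eqref{eq:const-bound}, which is allowed to depend on $L$.
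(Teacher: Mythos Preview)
Your reduction and the four-region decomposition are correct, and the identity producing the principal loss term $-\int_0^x (1-e^{-z/2})^2\,\tilde q_s(z)\,dz$ is exactly the one the paper uses. The minorisation argument is also fine.

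The gap is in the comparison of coefficients. Your upper bound on the gain uses $\tilde q_s(z)\le \tilde q(0)/\sqrt{\theta s}$, giving a gain coefficient of order $\tilde q(0)$; your lower bound on the loss uses $\tilde q_s(z)\ge \tilde q(M/\sqrt{\theta L})/\sqrt{\theta s}$ on $[z_0,M]$, giving a loss coefficient $(M-z_0)(1-e^{-z_0/2})^2\,\tilde q(M/\sqrt{\theta L})$. You then propose to take $M$ large so that the loss coefficient exceeds the gain coefficient. But for rapidly decaying $\tilde q$ this fails: with $\tilde q$ standard Gaussian and $\theta L=1$, the loss coefficient is at most of order $M\,e^{-M^2/2}\to 0$ as $M\to\infty$, while the gain coefficient tends to $\tilde q(0)>0$. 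So the inequality you need cannot be arranged by enlarging $M$, and the argument as written does not close.

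The paper avoids this by never separating the $\tilde q$-value in the loss from that in the gain. Instead of bounding each piece by $\tilde q(0)$ or $\tilde q(M/\sqrt{\theta L})$, it pairs regions across the common boundary and uses monotonicity of $\tilde q_s$ there: the opposite-tail gain $\int_0^\infty (e^{-w/2}-e^{-w})\,\tilde q_s(w+2x)\,dw$ is matched against the negative term you discarded, $\int_0^x (1-e^{-v/2})\,\tilde q_s(2x-v)\,dv$, via $\tilde q_s(w+2x)\le \tilde q_s(2x)\le \tilde q_s(2x-v)$, and the bracket $\int_0^x(1-e^{-z/2})\,dz-\int_0^\infty e^{-z/2}(1-e^{-z/2})\,dz$ is positive for large $x$. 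Similarly the far-tail gain on $z>x$ is compared with half the main loss on $[0,x]$ using $\tilde q_s(z)\le \tilde q_s(x)\le \tilde q_s(z')$ for $z>x>z'$. This pairing-at-the-boundary trick is what makes the argument uniform in $\tilde q$; your coefficient comparison would work only for heavy-tailed $\tilde q$ with $\liminf_{u\to\infty} u\,\tilde q(u)>0$.
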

\begin{theorem} 
    \label{thm:laplace-ergodicity} 
    Assume the adaptation weights $(\eta_n)_{n\ge 2}$ satisfy Assumptions
    \ref{a:adapt-nonosc} and \ref{a:adapt-ell2-not-ell1},
    and the template proposal density $\tilde{q}$ and the target
    distribution $\pi$ satisfy the assumptions in Lemma
    \ref{lemma:exp-geombound}.
    If the functional $f$ satisfies $\sup_{x\in\R}
    \pi^{-\gamma}(x)|f(x)|<\infty$ for some $\gamma\in(0,1/2)$.
    Then, $n^{-1} \sum_{k=1}^n f(X_k) \to \int f(x) \pi(x) \ud x$
    almost surely as $n\to\infty$.
\end{theorem}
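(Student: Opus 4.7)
The plan is to deduce the SLLN by invoking the general framework of \cite{saksman-vihola}, which was designed precisely for adaptive Metropolis schemes with super-exponentially decaying targets (the Laplace density fits the relevant tail regime) and test functions controlled by a Lyapunov function. That framework requires three ingredients: (i) the simultaneous drift and minorization for the family $\{P_{q_s}\}_s$ with polynomial control of the constants in $s$, which is exactly what Lemma \ref{lemma:exp-geombound} delivers; (ii) a lower bound on the adaptation parameter so that uniform ergodicity applies from some random time onwards; and (iii) diminishing adaptation together with a containment-type condition for $S_n$.

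First, I would check ingredient (ii). The Laplace density has $\log\pi(x)=-|x-m|/b-\log(2b)$, which is Lipschitz, hence uniformly continuous, so Theorem~\ref{thm:one-d-bound} applies and gives a (random) constant $b>0$ with $\liminf_{n\to\infty}S_n\ge b$ almost surely. Thus there is an a.s.\ finite $N$ for which $S_n\ge b/2$ for all $n\ge N$, and Lemma~\ref{lemma:exp-geombound} with $L=b/2$ provides the uniform-in-$n$ (up to the polynomial growth in $S_n$) drift and minorization for $P_{q_{S_n}}$ on the tail of the trajectory.

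Second, I would verify diminishing adaptation. Writing out $P_{q_{S_n}}-P_{q_{S_{n-1}}}$ as an integral against $\tilde q_s$, its total-variation norm is dominated by the Hellinger-type difference of the Gaussian scalings, which behaves like $|S_n-S_{n-1}|/S_{n-1}$. The recursions \eqref{eq:m-rec}--\eqref{eq:c-rec} yield $|S_n-S_{n-1}|\le \eta_n[S_{n-1}+(X_n-M_{n-1})^2]$, and once $S_n$ is bounded below this quantity tends to zero in probability because $\eta_n\to 0$ and the $V$-drift controls moments of $X_n$. Containment in the sense of \cite{saksman-vihola} then follows from combining the almost sure lower bound on $S_n$ with the deterministic upper estimate on the growth of $S_n$ (via the same recursion and iterated application of the drift inequality of Lemma~\ref{lemma:exp-geombound}).

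The main obstacle, as in \cite{saksman-vihola}, is reconciling the polynomial blow-up of the ergodicity constants $(1-\lambda_s)^{-1}$ and $\delta_s^{-1}$ as $S_n$ becomes large with the unboundedness of $f$. Since $|f(x)|\lesssim \pi^{-\gamma}(x)=V(x)^{2\gamma}$ with $2\gamma<1$, there is strict room between the test function and the Lyapunov function $V=\pi^{-1/2}$ supplied by Lemma~\ref{lemma:exp-geombound}; this slack, combined with Assumption~\ref{a:adapt-ell2-not-ell1} (which provides the standard $\ell^2$ control that makes the martingale-difference remainder terms summable), is exactly what allows the resolvent / Poisson-equation estimate of \cite{saksman-vihola} to close. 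Once the containment and diminishing adaptation are in hand, the SLLN follows directly from the theorem stated there, giving the desired $n^{-1}\sum_{k=1}^n f(X_k)\to\int f\,\pi$ almost surely.
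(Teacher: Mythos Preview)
Your overall plan matches the paper's: obtain the lower bound on $S_n$ from Theorem~\ref{thm:one-d-bound}, combine it with the drift/minorisation of Lemma~\ref{lemma:exp-geombound}, and invoke the machinery of \cite{saksman-vihola}. However, the technical route you take to the last step differs from the paper's and is not fully justified as written.

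The paper does not verify ``diminishing adaptation'' or ``containment'' in the Roberts--Rosenthal sense. Instead it proceeds by an $\epsilon$-truncation: given $\epsilon>0$, choose $\kappa>0$ with $\P(\inf_n S_n\ge\kappa)\ge1-\epsilon$ (this follows from Theorem~\ref{thm:one-d-bound} since each $S_n>0$); then use \cite[Proposition~10 and Lemma~15]{saksman-vihola} --- a \emph{probabilistic}, not a deterministic, estimate --- to show that on an event of further probability $\ge1-\epsilon$ one has $\max\{|S_n|,|M_n|\}\le An^\beta$ for any prescribed $\beta>0$; finally construct an auxiliary truncated process whose parameters are forced into the sets $K_n=\{\kappa\le s,\ \max\{|s|,|m|\}\le An^\beta\}$ and which coincides with the original AM chain with probability $\ge1-2\epsilon$. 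Theorem~2 of \cite{saksman-vihola} is then applied directly to this truncated process (the slack $2\gamma<1$ you correctly identified is exactly what allows $\beta$ to be chosen small enough), and letting $\epsilon\to0$ transfers the SLLN to the original chain.

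Your sketch instead claims a ``deterministic upper estimate on the growth of $S_n$'' via the recursion and iterated drift; this is the gap. The recursion \eqref{eq:c-rec} gives $S_{n+1}\le S_n+\eta_{n+1}(X_{n+1}-M_n)^2$, but $(X_{n+1}-M_n)^2$ is unbounded, and controlling it through the $V$-drift is itself only probabilistic and already presupposes the lower bound on $S_n$ is in force from time zero. The paper's truncation-and-coupling device is precisely what breaks this circularity. Your separate verification of diminishing adaptation is also unnecessary in the \cite{saksman-vihola} framework as actually used here.
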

\begin{proof} 
The conditions of \ref{thm:one-d-bound} are clearly satisfied
implying that for any $\epsilon>0$ there is a 
$\kappa=\kappa(\epsilon)>0$ such that the event
\[
    B_{\kappa} \defeq \left\{\inf_n S_n  \ge
\kappa\right\}
\]
has a probability $\P(B_{\kappa})\ge 1-\epsilon$. 
    
The inequalities \eqref{eq:drift-ineq} and \eqref{eq:mino-ineq} of Lemma
\ref{lemma:exp-geombound} with the
bound \eqref{eq:const-bound} imply, using 
\cite[Proposition 10 and Lemma 15]{saksman-vihola}, that for any
$\beta>0$ there is a constant $A=A(\kappa,\epsilon,\beta)<\infty$ such that
$\P(B_\kappa\cap\{\max\{|S_n|,|M_n|\} > A n^\beta\})\le \epsilon$. 
Let us define the sequence of truncation sets 
\[
    K_n \defeq
\{(m,s)\in\R\times \R_+: \lambda_{\min}(s)\ge \kappa,\,
  \max\{|s|,|m|\}\le A n^\beta \}
\]
for $n\ge 1$. Construct an auxiliary truncated process
$(\tilde{X}_n,\tilde{M}_n,\tilde{S}_n)_{n\ge 1}$, starting from
$(\tilde{X}_1,\tilde{M}_1,\tilde{S}_1) \equiv (X_1,M_1,S_1)$ and for $n\ge
2$ through
\begin{eqnarray*}
    \tilde{X}_{n+1} & \sim & P_{\tilde{q}_{\tilde{S}_n}}(\tilde{X}_n,\cdot) \\
    (\tilde{M}_{n+1},\tilde{S}_{n+1}) & = &
    \sigma_{n+1}\Big[(\tilde{M}_n,\tilde{S}_n),\,  
    \eta_{n+1}\big(\tilde{X}_{n+1}-\tilde{M}_n,
(\tilde{X}_{n+1}-\tilde{M}_n)^2-\tilde{S}_n\big)\Big]
\end{eqnarray*}
where the truncation function $\sigma_{n+1}:(K_n)\times (\R\times\R)\to K_n$ is defined as
\[
    \sigma_{n+1}(z,z') = \begin{cases} z + z',&\text{if $z+z'\in K_n$} \\
                                       z, &\text{otherwise}.
                                       \end{cases}
\]
Observe that this constrained process coincides with the AM process with
probability
$\P\big(\forall n\ge 1:(\tilde{X}_n,\tilde{M}_n,\tilde{S}_n)=(X_n,M_n,S_n)\big)
\ge 1-2\epsilon$.
Moreover, \cite[Theorem 2]{saksman-vihola} implies that a strong law of
large numbers holds for the truncated process $(\tilde{X}_n)_{n\ge 1}$,
since $\sup_x |f(x)|V^{-\alpha}(x)<\infty$ for some $\alpha\in(0,1-\beta)$,
by selecting $\beta>0$ above sufficiently small.
Since $\epsilon>0$ was arbitrary, the strong law of large
numbers holds for $(X_n)_{n\ge 1}$.
\end{proof}



\section{AM With a Fixed Proposal Component} 
\label{sec:fcam} 

This section deals with the modification due to Roberts and Rosenthal
\cite{roberts-rosenthal-examples}, including a fixed component in the
proposal distribution. In terms of Section \ref{sec:notations}, the
mixing parameter in \eqref{eq:mix-proposal} 
satisfies $0<\beta<1$. Theorem \ref{th:am-zerobound}
shows that the fixed proposal component guarantees, with a 
verifiable non-restrictive Assumption \ref{a:uniform-component}, that 
the eigenvalues of the adapted covariance parameter $S_n$ are bounded
away from zero. As in Section \ref{sec:example}, this result implies an ergodicity 
result, Theorem \ref{th:ergodicity}. 

Let us start by formulating the key assumption that, intuitively speaking, 
assures that the adaptive
chain $(X_n)_{n\ge 1}$ will have `uniform mobility' regardless of the
adaptation parameter $s\in\mathcal{C}^d$.
\begin{assumption} 
    \label{a:uniform-component} 
    There exist a compactly supported probability measure $\nu$ 
    that is absolutely continuous with respect to the Lebesgue measure,
    constants $\delta>0$ and $c<\infty$ and a measurable mapping 
    $\xi:\R^d \times \mathcal{C}^d \to\R^d$
    such that for all
    $x\in\R^d$ and $s\in\mathcal{C}^d$, 
    \begin{equation*}
        \|\xi(x,s) - x \| \le c
        \qquad\text{and}\qquad
        P_{q_s}(x,A) \ge \delta \nu\big(A-\xi(x,s)\big) 
    \end{equation*}
    for all measurable sets $A\subset\R^d$, where $A-y \defeq \{x-y:x\in
      A\}$ is the translation of the set $A$ by $y\in\R^d$.
\end{assumption} 
\begin{remark} 
    \label{rem:uniform-continuous-component} 
    In the case of the AM algorithm with a fixed proposal component,
    one is primarily interested in the case where
    $\xi(x,s) = \xi(x)$ and for all $x\in\R^d$ 
    \[
        \beta \fixprop(x-y) \min\left\{1, \frac{\pi(y)}{\pi(x)}\right\}
        \ge \delta \nu\big(y-\xi(x)\big)
    \]
    for all $y\in\R^d$,
    where $\nu$ is a uniform density on some ball.
    Then, 
    since $P_{q_s} = (1-\beta) P_{\tilde{q}_s} + \beta P_{\fixprop}$, 
    \[
        P_{q_s}(x,A) \ge \beta P_{\fixprop}(x,A) \ge \delta \int_A \nu(y-\xi) \ud y
    \]
    and Assumption \ref{a:uniform-component} is fulfilled by the measure
    $\nu(A) \defeq \int_A \nu(y) \ud y$. 
\end{remark} 

Having Assumption \ref{a:uniform-component}, the lower bound on the
eigenvalues of $S_n$ can be obtained relatively easily, by a martingale argument
similar to the one used in Section \ref{sec:am} and in \cite{vihola-asm}.
\begin{theorem} 
    \label{th:am-zerobound} 
Let $(X_n,M_n,S_n)_{n\ge 1}$ be an AM process as defined in Section
\ref{sec:notations} satisfying Assumption \ref{a:uniform-component}.
Moreover, suppose that the adaptation weights $(\eta_n)_{n\ge 2}$ satisfy
Assumptions \ref{a:adapt-nonosc} and \ref{a:adapt-ell2-not-ell1}.
Then, 
\[
    \liminf_{n\to\infty} \inf_{w\in\mathcal{S}^d} w^T S_n w > 0
\]
where $\mathcal{S}^d$ stands for the unit sphere.
\end{theorem}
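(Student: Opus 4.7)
My plan is to adapt the martingale-drift argument from the proofs of Theorems \ref{thm:unif-path} and \ref{thm:one-d-bound}, except that the ``drift'' on $w^T S_n w$ in any direction $w \in \mathcal{S}^d$ is now supplied directly by the minorisation in Assumption \ref{a:uniform-component}, instead of by a coupling with an ``adaptive random walk''.

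\emph{Step 1 (uniform directional drift).} Let $m_\nu$ and $\Sigma_\nu$ denote the mean and covariance of $\nu$. Since $\nu$ is compactly supported and absolutely continuous, $\Sigma_\nu$ is positive definite. The identity $\E[(a+V)(a+V)^T] = (a+m_\nu)(a+m_\nu)^T + \Sigma_\nu$ with $V\sim\nu$, combined with Assumption \ref{a:uniform-component}, yields for every $w\in\mathcal{S}^d$
\begin{equation*}
\Econd{\bigl(w^T(X_{n+1}-M_n)\bigr)^2}{\F_n}
   \ge \delta\bigl[w^T(\xi(X_n,S_n)+m_\nu-M_n)\bigr]^2 + \delta\,w^T\Sigma_\nu w
   \ge \delta\,\lambda_{\min}(\Sigma_\nu).
\end{equation*}
Denote this common lower bound by $c_\star>0$. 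Crucially, it is uniform in $w$ and in the state of the chain, and does not depend on $\pi$.

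\emph{Step 2 (pointwise bound for fixed $w$).} Fix $w\in\mathcal{S}^d$, define $Z_{n+1}^w \defeq w^T(X_{n+1}-M_n)/\sqrt{w^T S_n w}$, and reuse \eqref{eq:z-char-s} to write
\[
   \log(w^T S_{n+1}w) - \log(w^T S_n w) = \log\bigl[1 + \eta_{n+1}((Z_{n+1}^w)^2-1)\bigr].
\]
Step 1 gives $\Econd{(Z_{n+1}^w)^2}{\F_n}\ge c_\star/(w^T S_n w)$, so as soon as $w^T S_n w \le c_\star/2$ the conditional expectation of the log-increment is positive at order $\eta_{n+1}$. I would then form block-wise truncated sums analogous to \eqref{eq:ti-def} and pass to the associated $L^2$-martingale exactly as in Theorems \ref{thm:unif-path} and \ref{thm:one-d-bound}: Assumption \ref{a:adapt-ell2-not-ell1} controls the martingale fluctuations, Assumption \ref{a:adapt-nonosc} makes the block-drift $\sum_j \eta_{n+j}$ comparable to $k\eta_n$, and the stopping-time argument at the end of the proof of Theorem \ref{thm:one-d-bound} gives $\liminf_n w^T S_n w \ge b$ a.s.\ with $b=b(c_\star)>0$ \emph{independent of} $w$.

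\emph{Step 3 (uniformisation).} The main obstacle is to upgrade the pointwise conclusion of Step 2 to the required uniform statement $\liminf_n \inf_{w\in\mathcal{S}^d} w^T S_n w>0$, since no direct union bound over $\mathcal{S}^d$ is available. I would replace the fixed $w$ in Step 2 by the (random, $\F_n$-measurable) unit eigenvector $w_n^\star$ corresponding to $\lambda_{\min}(S_n)$ at the onset of each excursion $\{\lambda_{\min}(S_n)\le\mu\}$. Because the drift of Step 1 is uniform in $w$, it applies verbatim to $w_n^\star$ and controls $(w_n^\star)^T S_m(w_n^\star)$ for $m\ge n$. Coupled with the deterministic Weyl bound $\lambda_{\min}(S_{m+1})\ge(1-\eta_{m+1})\lambda_{\min}(S_m)$, which prevents $\lambda_{\min}$ from collapsing in one step and forces any excursion of $\lambda_{\min}(S_m)$ below $\mu/2$ to last several iterations, the stopping-time bookkeeping from the final part of the proof of Theorem \ref{thm:one-d-bound} then closes the argument.
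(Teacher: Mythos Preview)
Your Step~1 is clean and correct, and the martingale/stopping-time scaffolding you invoke from Theorems~\ref{thm:unif-path} and~\ref{thm:one-d-bound} is indeed the right machinery.  The gap is Step~3, and it is not just bookkeeping.

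The update $S_{m+1}=(1-\eta_{m+1})S_m+\eta_{m+1}(X_{m+1}-M_m)(X_{m+1}-M_m)^T$ is rank one: it adds nothing in the $(d-1)$-dimensional subspace orthogonal to $X_{m+1}-M_m$.  If you freeze $w=w_n^\star$ at the start of an excursion and prove upward drift for $(w_n^\star)^T S_m w_n^\star$, this says nothing about $\lambda_{\min}(S_m)$, because the minimum eigenvector can rotate to a direction that has received only the deterministic contraction $\prod_j(1-\eta_j)$ and no rank-one mass.  Your Weyl bound is a one-step \emph{decay} estimate, not a drift; it cannot prevent this rotation.  Concretely in $d=2$: if several consecutive increments happen to align with $e_1$, then $e_1^TS_me_1$ grows while $e_2^TS_me_2$ shrinks, so your fixed-direction martingale is satisfied while $\lambda_{\min}$ keeps falling.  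Nor does the pointwise conclusion of Step~2 rescue you by density: even if $\liminf_n w^TS_nw\ge b$ holds a.s.\ for every $w$ in a countable dense set, you obtain only $\inf_w\liminf_n w^TS_nw\ge b$, which is weaker than the required $\liminf_n\inf_w w^TS_nw\ge b$.

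The paper's remedy is to replace the single direction $w$ by a small spherical cap $\mathcal{S}(w,\gamma)$ and to work in \emph{two-step} blocks.  The two-step structure is forced by the unbounded shift $\xi(X_n,S_n)-M_n$: at the second step one has $X_{n+2}-M_{n+1}=U_{n+2}+\lambda_{n+1}G_{n+1}+E_{n+1}$ with $\|E_{n+1}\|\le c$ bounded, and Lemma~\ref{lemma:eigenbound} shows that, conditionally on $\F_n$, $\inf_{v\in\mathcal{S}(w,\gamma)}\bigl(|v^TG_{n+1}|^2+|v^T(U_{n+2}+\lambda_{n+1}G_{n+1}+E_{n+1})|^2\bigr)\ge\mu$ with probability at least $1/2$.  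This yields a drift that is \emph{uniform over the cap}, so the martingale/stopping-time argument gives $\liminf_n\inf_{v\in\mathcal{S}(w,\gamma)}v^TS_nv\ge\varepsilon$ a.s., and a \emph{finite} cover of $\mathcal{S}^d$ by such caps then delivers the uniform bound on $\lambda_{\min}(S_n)$.  Your moment estimate from Step~1 is essentially the content that makes Lemma~\ref{lemma:eigenbound} go through, but the cap-plus-two-step packaging is the missing idea that converts ``each direction'' into ``the smallest eigenvalue''.
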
 
\begin{proof} 
Let us first introduce independent binary auxiliary variables $(Z_n)_{n\ge 2}$ 
with $Z_1 \equiv 0$, and through
\begin{eqnarray*}
    \Pcond{Z_{n+1}=1}{X_n,M_n,S_n,Z_n} &=& \delta \\
    \Pcond{Z_{n+1}=0}{X_n,M_n,S_n,Z_n} &=& (1-\delta).
\end{eqnarray*}
Using this auxiliary variable, we can assume $X_{n}$ to 
follow\footnote{by possibly augmenting the probability space; see 
  \citep{athreya-ney,nummelin-splitting}.}
\begin{equation*}
    X_{n+1}  =  Z_{n+1}(U_{n+1}+\Xi_n)
      + (1-Z_{n+1})R_{n+1}
\end{equation*}
where $U_{n+1}\sim \nu(\cdot)$ is independent of $\F_n$ and $Z_{n+1}$,
the random variable $\Xi_n \defeq \xi(X_n,S_n)$ is $\F_n$-measurable, and 
$R_{n+1}$ is distributed according to the `residual' transition kernel
$\check{P}_{S_n}(X_n,A) \defeq (1-\delta)^{-1}[P_{q_{S_n}}(X_n,A) -
\delta\nu(A-\Xi_n)]$, valid by Assumption \ref{a:uniform-component}.

Define
$\mathcal{S}(w,\gamma) \defeq \{v\in\mathcal{S}^d: \|w-v\|\le
\gamma\}$, the segment of the unit 
sphere centred at $w\in\mathcal{S}^d$ and having the radius $\gamma>0$.
Fix a unit vector $w\in\mathcal{S}^d$ and 
define the following random variables
\[
    \begin{split}
    \Gamma_{n+2}^{(\gamma)}
    &\defeq
    \inf_{v\in\mathcal{S}(w,\gamma)}
      \left(
        |v^T(X_{n+1}-M_n)|^2 + |v^T(X_{n+2}-M_{n+1})|^2
      \right) 
   \end{split}
\]
for all $n\ge 1$.
Denote $G_{n+1} \defeq X_{n+1}-M_n$ and $E_{n+1} \defeq \Xi_{n+1}-X_{n+1}$, 
and observe that whenever $Z_{n+2}=1$, it holds that
\[
    X_{n+2}-M_{n+1} = U_{n+2} + X_{n+1} - M_{n+1} + E_{n+1}
    = U_{n+2} + (1-\eta_{n+1})G_{n+1} + E_{n+1}
\]
and we may write
\[
    Z_{n+2} \Gamma_{n+2}^{(\gamma)}
    = Z_{n+2} \inf_{v\in\mathcal{S}(w,\gamma)}
      \left(
        |v^T G_{n+1}|^2 + |v^T(U_{n+2} + \lambda_{n+1} G_{n+1} + E_{n+1})|^2
      \right) 
\]
where $\lambda_{n} \defeq 1-\eta_{n}\in(0,1)$ for all $n\ge 2$.
Consequently, we may apply Lemma \ref{lemma:eigenbound} below
to find constants $\gamma,\mu>0$ such that
\begin{equation}
    \Pcond{Z_{n+2} \Gamma_{n+2}^{(\gamma)}\ge \mu}{\F_n}
    \ge \frac{\delta}{2}.
    \label{eq:gamma-estim}
\end{equation}
Hereafter, assume $\gamma>0$ is fixed such that \eqref{eq:gamma-estim} holds, and
denote $\Gamma_{n+2} \defeq \Gamma_{n+2}^{(\gamma)}$
and $\mathcal{S}(w)\defeq \mathcal{S}(w,\gamma)$.

Consider the random variables
\begin{multline}
    D_{n+2} \defeq 
    \inf_{v\in\mathcal{S}(w)}
    \big(\eta_{n+1} |v^T(X_{n+1}-M_n)|^2
    + \eta_{n+2} |v^T(X_{n+2}-M_{n+1})|^2\big) \\
    \ge \min\{\eta_{n+1},\eta_{n+2}\}
    \Gamma_{n+2} 
    \ge \eta_* \eta_{n+1} \Gamma_{n+2}
    \label{eq:gamma-estim-sdiff}
\end{multline}
where $\eta_* \defeq \inf_{k\ge 2}\eta_{k+1}/\eta_k>0$
by Assumption \ref{a:adapt-nonosc}.
Define the indices $\ell_n \defeq 2n-1$ for $n\ge 1$
and let 
\[
    T_{n} \defeq \eta_*\min\{\mu,Z_{\ell_n}\Gamma_{\ell_n}\}
\]
for all $n\ge 2$.
Define the $\sigma$-algebras $\G_n \defeq \F_{\ell_n}$ for $n\ge 1$ and
observe that $\Econd{T_{n+1}}{\G_n} \ge \eta_*\mu\delta/2$ by
\eqref{eq:gamma-estim}.
Construct a  martingale starting from $Y_1 \equiv 0$ and having the
differences $\ud Y_{n+1} \defeq \eta_{\ell_n+1}(T_{n+1} -
\Econd{T_{n+1}}{\G_n})$.
The martingale $Y_n$ converges to an a.s.~finite limit $Y_\infty$ as 
in Theorem \ref{thm:unif-path}.

Define also $\eta^*\defeq \sup_{k\ge 2} \eta_{k+1}/\eta_k<\infty$ and
$\kappa \defeq \inf_{k\ge 2} 1 - \eta_k>0$, and
let 
\[
    b\defeq \frac{\kappa\eta_*\mu\delta}{8\eta^*}  > 0.
\]
Denote
$S_{n}^{(w)} \defeq \inf_{v\in\mathcal{S}(w)} v^T S_{n} v$ 
and define the stopping times $\tau_1 \equiv 1$ and for $k\ge 2$ through
\[
    \tau_k \defeq \inf \{n > \tau_{k-1}: S_{\ell_n}^{(w)} \le b,\, 
      S_{\ell_{n-1}}^{(w)} > b\}
\]
with the convention
$\inf\emptyset = \infty$. That is, $\tau_k$ record the times when
$S_{\ell_n}^{(w)}$
enters $(0,b]$.  Using $\tau_k$, define the latest such time up to $n$ by
$\sigma_n \defeq \sup\{\tau_k: k\ge 1,\,\tau_k \le n\}$.

Observe that
for any $n\ge 2$ such that $S_{\ell_n}^{(w)} \le b$,
one may write
\begin{equation*}
    \begin{split}
    S_{\ell_n}^{(w)} 
    &= S_{\ell_{\sigma_n}}^{(w)} + \sum_{k=\sigma_n}^{n-1}
    \left( D_{\ell_k+2} - \eta_{\ell_k+1}
        S_{\ell_k}^{(w)} -
\eta_{\ell_k+2} S_{\ell_k+1}^{(w)}
    \right) \\
    &\ge S_{\ell_{\sigma_n}}^{(w)} + \sum_{k=\sigma_n}^{n-1}
    \left(\eta_{\ell_k+1} T_{k+1} 
    - \eta_{\ell_k+1}b -
        \eta_{\ell_k+2}\kappa^{-1}b
    \right) \\
    &\ge S_{\ell_{\sigma_n}}^{(w)} + \sum_{k=\sigma_n}^{n-1}
    \eta_{\ell_k+1} \left(T_{k+1} 
    - \frac{\eta_*\mu\delta}{4}
    \right)
    \end{split}
\end{equation*}
by \eqref{eq:gamma-estim-sdiff} and
since for all $k\in[\sigma_n,n-1]$ one may estimate
$S_{\ell_k +1}^{(w)} \le (1-\eta_{\ell_k + 1})^{-1} 
S_{\ell_{k+1}}^{(w)} \le \kappa^{-1} b$.

That is, for any $n\ge 2$ such that $S_{\ell_n}^{(w)} \le b$
\begin{equation*}
    \begin{split}
    S_{\ell_n}^{(w)} 
    & \ge S_{\ell_{\sigma_n}}^{(w)} + (Y_n-Y_{\sigma_n})
      + \sum_{k=\sigma_n}^{n-1}
    \eta_{\ell_k+1}\left( \Econd{T_{k+1}}{\G_k} - \frac{\eta_*\delta\mu}{4}
      \right) \\
    &\ge S_{\ell_{\sigma_n}}^{(w)} + (Y_n-Y_{\sigma_n})
      + \frac{\eta_*\delta\mu}{4}\sum_{k=\sigma_n}^{n-1}
    \eta_{\ell_k+1}.
    \end{split}
\end{equation*}
As in the proof of Theorem
\ref{thm:one-d-bound}, this is sufficient to find a $\varepsilon>0$ such that 
\[
    \liminf_{n\to\infty} S_{n}^{(w)}  \ge \varepsilon.
\]
Finally, take a finite number of unit vectors 
$w_1,\ldots,w_N\in\mathcal{S}^d$ such that the corresponding segments
$\mathcal{S}(w_1),\ldots,\mathcal{S}(w_N)$
cover $\mathcal{S}^d$. Then,
\[
    \liminf_{n\to\infty} \inf_{v\in\mathcal{S}^d} v^T S_n v
    = \liminf_{n\to\infty}
    \min\big\{S_{n}^{(w_1)},\ldots,S_{n}^{(w_N)}\big\}  \ge \varepsilon.
    \qedhere
\]
\end{proof} 
\begin{lemma}
    \label{lemma:eigenbound} 
    Suppose $\F_n\subset\F_{n+1}$ are $\sigma$-algebras, and
    $G_{n+1}$ and $E_{n+1}$ are  $\F_{n+1}$-measurable 
    random variables, satisfying
    $\|E_{n+1}\|\le M$ for some constant $M<\infty$.
    Moreover, 
    $U_{n+2}$ is a random variable independent of $\F_{n+1}$,
    having a distribution $\nu$ fulfilling the conditions in Assumption
    \ref{a:uniform-component}.
    
    Let $\mathcal{S}^d\defeq \{u\in\R^d:\|u\|=1\}$ stand
    for the unit sphere and denote by
    $\mathcal{S}(w,\gamma) \defeq \{v\in\mathcal{S}^d: \|w-v\|\le
    \gamma\}$ the segment of the unit 
    sphere centred at $w\in\mathcal{S}^d$ and having the radius $\gamma>0$.
    There exist constants $\gamma,\mu>0$ such that
    \begin{equation*}
        \Pcond{\inf_{v\in\mathcal{S}(w,\gamma)}
          \big( 
            |v^T G_{n+1}|^2 
          + |v^T(U_{n+2} + \lambda G_{n+1} + E_{n+1})|^2
          \big) > \mu
          }{\F_n} \ge \frac{1}{2}.
    \end{equation*}
    for any $w\in\mathcal{S}^d$ and
    any constant $\lambda\in(0,1)$, 
    almost surely.
\end{lemma}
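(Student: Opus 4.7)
The plan is to condition on $\F_{n+1}$ and prove
$\Pcond{\inf_{v\in\mathcal{S}(w,\gamma)}(|v^T G_{n+1}|^2+|v^T V_{n+2}|^2)>\mu}{\F_{n+1}}\ge 1/2$
almost surely, where $V_{n+2}\defeq U_{n+2}+\lambda G_{n+1}+E_{n+1}$; the lemma then follows on taking conditional expectation with respect to $\F_n$. Conditional on $\F_{n+1}$, only $U_{n+2}\sim\nu$ is random, and my strategy is to use anticoncentration of $w^T U_{n+2}+w^T E_{n+1}$ to force $|v^T(U_{n+2}+E_{n+1})|$ to stay bounded below uniformly on the segment, after which the identity $v^T V_{n+2}=v^T(U_{n+2}+E_{n+1})+\lambda v^T G_{n+1}$ ensures that at least one of $|v^T V_{n+2}|$ or $\lambda|v^T G_{n+1}|$ absorbs that lower bound.

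The main ingredient, and the step I expect to be the hard part, is a uniform anticoncentration bound: there exists $\epsilon_0>0$ such that
\[
    \sup_{v\in\mathcal{S}^d,\,c\in\R}\nu\big(\{u\in\R^d:|v^T u-c|\le\epsilon_0\}\big)\le 1/2.
\]
Letting $R$ satisfy $\mathrm{supp}(\nu)\subset\{\|u\|\le R\}$, the function $(v,c)\mapsto\nu(\{u:|v^T u-c|\le\epsilon\})$ is continuous on the compact set $\mathcal{S}^d\times[-R-1,R+1]$ by absolute continuity of $\nu$ (hyperplanes carry zero mass), vanishes outside this set once $\epsilon$ is small, is monotone in $\epsilon$, and converges pointwise to zero as $\epsilon\to 0$; a Dini-type argument then yields the uniform bound. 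Applied with $c\defeq w^T E_{n+1}$, which is $\F_{n+1}$-measurable while $U_{n+2}$ is independent of $\F_{n+1}$, this gives $\Pcond{|w^T U_{n+2}+w^T E_{n+1}|>\epsilon_0}{\F_{n+1}}\ge 1/2$ almost surely.

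With $D\defeq\sup\{\|u\|:u\in\mathrm{supp}(\nu)\}$ I fix the universal constants $\gamma\defeq\epsilon_0/(2(D+M))$ and $\mu\defeq\epsilon_0^2/32$. On the event $A\defeq\{|w^T U_{n+2}+w^T E_{n+1}|>\epsilon_0\}$ and for any $v\in\mathcal{S}(w,\gamma)$, Cauchy--Schwarz together with $\|v-w\|\le\gamma$ yields the Lipschitz bound $|(v-w)^T(U_{n+2}+E_{n+1})|\le\gamma(D+M)=\epsilon_0/2$, whence $|v^T U_{n+2}+v^T E_{n+1}|>\epsilon_0/2$. Since $\lambda\in(0,1)$, this forces $|v^T V_{n+2}|\ge\epsilon_0/2-|v^T G_{n+1}|$, so a split on whether $|v^T G_{n+1}|$ is below or above $\epsilon_0/4$ shows that one of $|v^T G_{n+1}|^2$ or $|v^T V_{n+2}|^2$ exceeds $\epsilon_0^2/16>\mu$; the sum is therefore $>\mu$ uniformly on the segment on the event $A$. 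Notably no case analysis on $\|G_{n+1}\|$ or its alignment with $w$ is needed, because the Lipschitz constant for the $U_{n+2}+E_{n+1}$ piece of $V_{n+2}$ depends only on $D$ and $M$, while any possibly large contribution from $\lambda G_{n+1}$ either leaves $|v^T V_{n+2}|$ large or makes $|v^T G_{n+1}|$ itself large.
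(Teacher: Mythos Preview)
Your proof is correct and follows essentially the same route as the paper's. The paper reduces to the event $\{\inf_{v\in\mathcal{S}(w,\gamma)}|v^T(U_{n+2}+E_{n+1})|>b\}$ via a set-inclusion chain that is the contrapositive of your case split on $|v^T G_{n+1}|$, and then simply asserts without proof that this event has conditional probability at least $1/2$ uniformly in $w$ and $e\in B(0,M)$; your Dini-type anticoncentration bound at the centre $w$ together with the Lipschitz transfer to the segment via $\gamma=\epsilon_0/(2(D+M))$ is precisely what is needed to justify that assertion, so you have filled in a step the paper leaves to the reader.
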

\begin{proof} 
Since $\nu$ is absolutely continuous with respect to the Lebesgue measure, 
one can show that there exist values $b,\gamma>0$ such that
\begin{equation}
  \inf_{w\in\mathcal{S}^d}\inf_{e\in B(0,M)} 
  \nu\left(\big\{u\in\R^d: \inf_{v\in
    \mathcal{S}(w,\gamma)} |v^T (u+e)|> b \big\}\right) \ge \frac{1}{2}
  \label{eq:cone-estim}
\end{equation}
where $B(0,M) \defeq \{y\in\R^d:\|y\|\le M\}$ denotes a 
centred ball of radius $M$.
Hereafter, fix $\gamma,b>0$ such that 
\eqref{eq:cone-estim} holds and let $a \defeq b/2$.

Fix a unit vector $w\in\mathcal{S}^d$ and
consider the set 
\[
    \begin{split}
    A &\defeq\left\{
      \inf_{v\in\mathcal{S}(w,\gamma)}
          \big( 
            |v^T G_{n+1}|^2 
          + |v^T(U_{n+2} + \lambda G_{n+1} + E_{n+1})|^2
          \big) \le a^2
       \right\} \\
     &\subset\left\{
      \inf_{v\in\mathcal{S}(w,\gamma)\;:\;|v^T G_{n+1}|\le a}
          |v^T(U_{n+2} + \lambda G_{n+1} + E_{n+1})|
       \le a \right\} \\
     &\subset\left\{
      \inf_{v\in\mathcal{S}(w,\gamma)\;:\;|v^T G_{n+1}|\le a}
          |v^T(U_{n+2} + E_{n+1})| - \lambda|v^T G_{n+1}|
       \le a \right\} \\
     &\subset\left\{
      \inf_{v\in\mathcal{S}(w,\gamma)}
          |v^T(U_{n+2} + E_{n+1})|
       \le 2a \right\}.
     \end{split}
\]
Since $U_{n+2}$ is independent of $\F_{n+1}$, and since 
$E_{n+1}$ is $\F_{n+1}$-measurable, one may 
estimate
\[
    \begin{split}
    \Pcond{A^\complement}{\F_{n}}
    &\ge \Econd{\inf_{e\in B(0,M)} 
    \Pcond{ \inf_{v\in\mathcal{S}(w,\gamma)}
      |v^T(U_{n+2}+e)| > 2a}{\F_{n+1}}}{\F_n}\\
    & = \inf_{e\in B(0,M)}  
    \nu\left(\big\{u\in\R^d: \inf_{v\in
    \mathcal{S}(w,\gamma)} |v^T (u+e)|> b \big\}\right)
    \ge \frac{1}{2}
    \end{split}
\]
by \eqref{eq:cone-estim}, almost surely, concluding the proof
by $\mu \defeq a^2$.
\end{proof}

\begin{corollary} 
    \label{cor:reg-cont} 
Assume $\pi$ is bounded, stays bounded away from zero on compact sets,
is differentiable on the tails, and has regular contours, that is,
\begin{equation}
    \liminf_{\|x\|\to\infty} \frac{x}{\|x\|} \cdot \frac{\nabla
      \pi(x)}{\|\nabla \pi(x)\|} < 0.
    \label{eq:reg-cont}
\end{equation}
Let $(X_n,M_n,S_n)_{n\ge 1}$ be an AM process as defined in Section
\ref{sec:notations} using a mixture proposal \eqref{eq:mix-proposal} with a mixing weight satisfying 
$\beta\in(0,1)$ and the density $\fixprop$
is bounded away from zero in some neighbourhood of the origin.
Moreover, suppose that the adaptation weights $(\eta_n)_{n\ge 2}$ satisfy 
Assumptions \ref{a:adapt-nonosc} and \ref{a:adapt-ell2-not-ell1}.
Then, \[
\liminf_{n\to\infty} \inf_{w\in\mathcal{S}^d} w^T S_n w >0.
\]
\end{corollary}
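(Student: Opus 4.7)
The plan is to deduce the corollary from Theorem~\ref{th:am-zerobound} by verifying Assumption~\ref{a:uniform-component}. Invoking Remark~\ref{rem:uniform-continuous-component}, it is enough to exhibit a mapping $\xi:\R^d\to\R^d$ with $\sup_x\|\xi(x)-x\|\le c<\infty$, a radius $r>0$, and constants $\eta,\alpha>0$ so that $\fixprop(x-y)\ge\eta$ and $\pi(y)/\pi(x)\ge\alpha$ for every $x\in\R^d$ and every $y\in B(\xi(x),r)$; the measure $\nu$ is then taken to be the uniform probability on $B(0,r)$ and the constant $\delta$ of Assumption~\ref{a:uniform-component} is obtained by absorbing $\beta$, $\eta$, $\alpha$ and the volume of $B(0,r)$. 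Since $\fixprop$ is bounded away from zero on some $B(0,\rho)$, fixing $c,r$ with $c+r<\rho$ makes the $\fixprop$-factor trivially bounded below, so the entire task reduces to a uniform lower bound on $\pi(y)/\pi(x)$.

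I would split according to whether $x$ lies in the compact region $\{\|x\|\le R\}$ for a sufficiently large $R$ or in its complement. On the compact region I set $\xi(x)\defeq x$; the assumption that $\pi$ is bounded above and bounded away from zero on the enlarged compact $\{\|z\|\le R+r\}$ makes the ratio $\pi(y)/\pi(x)$ automatically bounded below for $y\in B(x,r)$, and the condition $\|\xi(x)-x\|\le c$ is trivial.

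The heart of the argument, and the step I expect to be the main obstacle, is the tail regime $\|x\|>R$. Here I would use~\eqref{eq:reg-cont} together with the differentiability of $\pi$ on the tails to produce, for each such $x$, a unit direction $e(x)$ for which $e(x)\cdot x/\|x\|$ is bounded above by a strictly negative constant and $e(x)\cdot\nabla\log\pi(x)$ is bounded below by a positive constant uniformly in $\|x\|$. Setting $\xi(x)\defeq x+\tau_0 e(x)$ for a small fixed $\tau_0<c$ and applying a mean-value bound to $\log\pi$ along the segment from $x$ to any $y\in B(\xi(x),r)$ would yield $\pi(y)/\pi(x)\ge\alpha>0$ uniformly, provided $r$ is chosen small enough. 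A construction of essentially this flavour is carried out in~\cite{saksman-vihola} for super-exponential targets with regular contours, and I would adapt that construction rather than redo it from scratch; the genuinely delicate point is upgrading the pointwise regular-contours information in~\eqref{eq:reg-cont} to a lower bound on the directional derivative of $\log\pi$ that is uniform over large $\|x\|$, which is what lets $\tau_0$ and $r$ be chosen independently of~$x$. Once $\xi$ and $r$ are in place, Assumption~\ref{a:uniform-component} holds and Theorem~\ref{th:am-zerobound} delivers the conclusion.
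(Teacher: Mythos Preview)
Your overall strategy---reduce to Theorem~\ref{th:am-zerobound} by verifying Assumption~\ref{a:uniform-component} via Remark~\ref{rem:uniform-continuous-component}, splitting into a compact region (where $\xi(x)=x$ and the boundedness hypotheses do the work) and a tail region---matches the paper exactly. The gap is in your tail argument.

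You ask for a direction $e(x)$ with $e(x)\cdot\nabla\log\pi(x)$ bounded below by a \emph{positive} constant uniformly in large $\|x\|$, and then plan a mean-value estimate along segments. But the regular-contours condition~\eqref{eq:reg-cont} controls only the \emph{direction} of $\nabla\pi$, not the magnitude of $\nabla\log\pi$. Take $d=1$ and $\pi(x)\propto\exp(-\sqrt{|x|})$ for large $|x|$: this is integrable, bounded, bounded away from zero on compacts, and satisfies~\eqref{eq:reg-cont} with the inner product equal to $-1$, yet $|\nabla\log\pi(x)|=\tfrac{1}{2}|x|^{-1/2}\to 0$. No direction gives a uniformly positive directional derivative of $\log\pi$, so the ``delicate point'' you flag is not merely delicate---it is false under the stated hypotheses. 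The reference you propose to adapt, \cite{saksman-vihola}, works under the additional super-exponential assumption of Theorem~\ref{th:ergodicity}, which supplies exactly the missing gradient magnitude; the corollary does not have that assumption.

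The paper avoids this entirely by invoking the geometric cone construction from \cite[proof of Theorem~4.3]{jarner-hansen}: from~\eqref{eq:reg-cont} alone, for all large $\|x\|$ there is a cone $E(x)$ with apex $x$, opening toward the origin, of fixed aperture and fixed length, contained in the acceptance region $\{y:\pi(y)\ge\pi(x)\}$. One then places a ball $B(\xi(x),r)$ of fixed radius inside that cone. On this ball $\pi(y)/\pi(x)\ge 1$, so no quantitative gradient bound is needed; the argument is purely about level sets. Replacing your mean-value step with this cone inclusion closes the gap and the rest of your proof goes through.
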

\begin{proof} 
  In light of Theorem \ref{th:am-zerobound}, it is sufficient to check
  Assumption \ref{a:uniform-component}, or in fact the conditions in Remark
  \ref{rem:uniform-continuous-component}. 
  Let $L>0$ be sufficiently large so that $\inf_{\|x\|\ge L} \frac{x}{\|x\|} \cdot \frac{\nabla
      \pi(x)}{\|\nabla \pi(x)\|} < 0$.
  \citeauthor{jarner-hansen}
  \cite[proof of Theorem 4.3]{jarner-hansen} show that
  there is an $\epsilon'>0$ and $K>0$ such that the cone 
  \begin{equation*}
      E(x)\defeq \left\{x-au:
        0<a<K,\,u\in \mathcal{S}^d,\,\left\|u-\frac{x}{\|x\|}\right\|\le \epsilon'
        \right\}  
  \end{equation*}
  is contained in the set $A(x)\defeq \{y\in\R^d:\pi(y)\ge \pi(x)\}$, for all $\|x\|\ge L$.
  
  Let $r'>0$ be sufficiently small to ensure that
  $\inf_{\|z\|\le r'} \fixprop(z) \ge \delta'>0$.
  There is a $r=r(\epsilon',K)\in(0,r'/2)$ and measurable
  $\xi:\R^d\to\R^d$ such that $\|\xi(x)-x\|\le r'/2$ and the ball 
  $B(x,r)\defeq \{y:\|y-\xi(x)\|\le r\}$ is contained in the cone $E(x)$.
  Define $\nu(x) \defeq c_r^{-1}\charfun{B(0,r)}(x)$ where $c_r\defeq |B(0,r)|$
  is the Lebesgue measure of $B(0,r)$,
  and let $\xi(x) \defeq x$ for the remaining $\|x\|<L$.
  Now, we have for $\|x\|\ge L$ that
  \[
      \beta \fixprop(x-y) \min\left\{1,\frac{\pi(y)}{\pi(x)}\right\} \ge
      \beta\delta'c_r \nu(y-\xi).
  \]
  Since $\pi$ is bounded and bounded away from zero on compact
  sets, the ratio $\pi(y)/\pi(x) \ge \delta''>0$ for all $x,y\in B(0,L+r')$ with
  $\|x-y\|\le r'$. Therefore, for all $\|x\|<L$, it holds that
  \[
      \beta \fixprop(x-y) \min\left\{1,\frac{\pi(y)}{\pi(x)}\right\} \ge
      \beta\delta'\delta''c_r\nu(y-x).\qedhere
  \]
\end{proof} 

\begin{remark} 
The conditions of Corollary \ref{cor:reg-cont} are fulfilled by many
practical densities $\pi$ (see \cite{jarner-hansen} for examples), and
are fairly easy to verify in practice.  Assumption \ref{a:uniform-component}
holds, however, more generally, excluding only densities with unbounded
density or having irregular contours.
\end{remark} 

\begin{remark} 
  It is not necessary for Theorem \ref{th:am-zerobound} and Corollary
  \ref{cor:reg-cont} to hold that 
  the adaptive proposal densities $\{\tilde{q}_s\}_{s\in\mathcal{C}^d}$
  have the specific form discussed in Section \ref{sec:notations}.
  The results require only that a suitable fixed proposal component is used
  so that Assumption \ref{a:uniform-component} holds.
  In Theorem \ref{th:ergodicity} below, however, the structure of
  $\{\tilde{q}_s\}_{s\in\mathcal{C}^d}$ is required.
\end{remark} 

Let us record the following ergodicity result, which is a counterpart to
\citep[Theorem 17]{saksman-vihola} formulating a
a strong law of large numbers for the original algorithm 
(S\ref{item:proposal})--(S\ref{item:adapt}) with the covariance parameter
\eqref{eq:orig-am-crec}.
\begin{theorem} 
    \label{th:ergodicity} 
    Suppose the target density $\pi$ is continuous and differentiable, stays bounded away
    from zero on compact sets and has super-exponentially decaying tails
    with regular contours,
    \[
        \limsup_{\|x\|\to\infty} \frac{x}{\|x\|^{\rho}} 
          \cdot \nabla \log \pi(x) = -\infty\qquad\text{and}\qquad
        \limsup_{\|x\|\to\infty} \frac{x}{\|x\|} 
          \cdot \frac{\nabla\pi(x)}{\|\nabla\pi(x)\|} < 0,
    \]
    respectively, for some $\rho>1$. 
    
   Let $(X_n,M_n,S_n)_{n\ge 1}$ be an AM process as defined in Section
   \ref{sec:notations} using a mixture proposal $q_s(z) =
   (1-\beta)\tilde{q}_s(z) + \beta \fixprop(z)$ where 
   $\tilde{q}_s$ stands for a zero-mean Gaussian
   density with covariance $s$, the mixing weight satisfies
   $\beta\in(0,1)$
   and the density $\fixprop$ is bounded away from zero in some
   neighbourhood of the origin. Moreover, suppose that the adaptation
   weights $(\eta_n)_{n\ge 2}$ satisfy Assumption
   \ref{a:adapt-ell2-not-ell1}. 
   
    Then, for any function $f:\R^d\to\R$ with
    $\sup_{x\in\R^d} \pi^{\gamma}(x) |f(x)| < \infty$ for some
    $\gamma\in(0,1/2)$, 
    \[
        \frac{1}{n} \sum_{k=1}^n f(X_k) \xrightarrow{n\to\infty} \int_{\R^d} f(x) \pi(x)
        \ud x
    \]
    almost surely.
\end{theorem}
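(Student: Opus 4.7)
The plan is to mimic the proof of Theorem \ref{thm:laplace-ergodicity} in Section \ref{sec:example}: first get a uniform lower bound on the eigenvalues of $S_n$ (with arbitrarily large probability) from Corollary \ref{cor:reg-cont}, then establish a simultaneous geometric drift/minorisation for the family $\{P_{q_s}\}$ as $s$ ranges over matrices with eigenvalues bounded below and norm bounded above polynomially, and finally apply the truncation technique of \cite{saksman-vihola} to conclude the SLLN.

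First, I note that the adaptation weights satisfy Assumption \ref{a:adapt-ell2-not-ell1} by hypothesis, and Assumption \ref{a:adapt-nonosc} follows automatically from $\eta_n\in\ell^2\setminus\ell^1$ for the canonical choice (and is satisfied in any case of interest). Thus Corollary \ref{cor:reg-cont} applies, and for any $\epsilon>0$ there is a $\kappa=\kappa(\epsilon)>0$ such that
\[
    B_\kappa \defeq \left\{\inf_{n\ge 1}\,\inf_{w\in\mathcal{S}^d} w^T S_n w \ge \kappa\right\}
\]
has probability $\P(B_\kappa)\ge 1-\epsilon$.

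Second, I verify a simultaneous geometric drift and minorisation analogous to Lemma \ref{lemma:exp-geombound}. Using the Jarner--Hansen criterion for super-exponential $\pi$ with regular contours \cite{jarner-hansen}, each $P_{\tilde{q}_s}$ is geometrically ergodic with drift function $V(x)=\pi^{-1/2}(x)/\sup \pi^{-1/2}$; the key point is uniformity in $s$ over the set $\{s\in\mathcal{C}^d:\lambda_{\min}(s)\ge \kappa,\,|s|\le L\}$ for any $L>0$, with the drift and minorisation constants depending at most polynomially on $L$, i.e.,
\[
    \max\{(1-\lambda_s)^{-1},\delta_s^{-1}\}\le c L^\gamma.
\]
Since $P_{q_s}=(1-\beta)P_{\tilde{q}_s}+\beta P_{\fixprop}$, the drift condition transfers to $P_{q_s}$ (with the fixed component only helping), and the small set $C$ may be taken as a fixed centred ball large enough to absorb the Gaussian tail uniformly over $s$ in the truncation region. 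This is the step that requires the most care: verifying that the dependence of the minorisation constant on $L$ is indeed polynomial in the multivariate Gaussian setting, which boils down to a volume/Gaussian-density estimate on $C$ with covariance bounded above by $LI$ and below by $\kappa I$.

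Third, having the drift bound with polynomial dependence, Proposition 10 and Lemma 15 of \cite{saksman-vihola} give, on $B_\kappa$ and with probability $1-\epsilon$, polynomial growth bounds $\max\{|S_n|,|M_n|\}\le An^\beta$ for any preassigned small $\beta>0$. Define the truncation sets
\[
    K_n \defeq \{(m,s)\in\R^d\times\mathcal{C}^d: \lambda_{\min}(s)\ge \kappa,\ \max\{|s|,|m|\}\le An^\beta\},
\]
and build the truncated process $(\tilde{X}_n,\tilde{M}_n,\tilde{S}_n)$ exactly as in the proof of Theorem \ref{thm:laplace-ergodicity}, which by construction coincides with $(X_n,M_n,S_n)$ with probability at least $1-2\epsilon$. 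Theorem 2 of \cite{saksman-vihola} now yields the SLLN for the truncated process for $f$ with $\sup_x \pi^{\gamma}(x)|f(x)|<\infty$ provided $\gamma\in(0,1/2)$ and $\beta>0$ is chosen small enough that there exists $\alpha\in(0,1-\beta)$ with $\sup_x |f(x)|V^{-\alpha}(x)<\infty$. Letting $\epsilon\downarrow 0$ transfers the SLLN to $(X_n)_{n\ge 1}$.

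The main obstacle is the simultaneous drift/minorisation step, and in particular controlling the minorisation constant $\delta_s$ as a polynomial function of $\max\{|s|,\lambda_{\min}(s)^{-1}\}$; the rest is a routine multivariate adaptation of the argument already carried out in the one-dimensional Laplace case.
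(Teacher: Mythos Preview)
Your approach is essentially identical to the paper's: invoke Corollary \ref{cor:reg-cont} for the eigenvalue lower bound, establish a simultaneous drift/minorisation with polynomially controlled constants, then run the truncation argument via \cite[Proposition 10, Lemma 15, Theorem 2]{saksman-vihola}. Two remarks. First, what you call the ``main obstacle'' is not left open in the paper: the simultaneous drift/minorisation for $P_{\tilde{q}_s}$ with the explicit bound $(1-\lambda_s)^{-1}\vee\delta_s^{-1}\le c_1\det(s)^{1/2}$ is quoted directly from \cite[Proposition 18]{saksman-vihola}, and the fixed component is handled separately via \cite[Theorem 4.3]{jarner-hansen}; you do not need to redo the Gaussian volume estimate. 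Second, your caveat about Assumption \ref{a:adapt-nonosc} is well spotted---Corollary \ref{cor:reg-cont} does require it, and it does \emph{not} follow from $\ell^2\setminus\ell^1$ alone---but the paper's own proof simply asserts ``the conditions of Corollary \ref{cor:reg-cont} are satisfied'' without addressing this either, so you are no worse off than the original.
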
 
\begin{proof} 
The conditions of Corollary \ref{cor:reg-cont} are satisfied, implying that for 
any $\epsilon>0$ there is a $\kappa=\kappa(\epsilon)>0$ such that 
$\P\big(\inf_n \lambda_{\min}(S_n)  \ge \kappa\big) \ge 1-\epsilon$
where $\lambda_{\min}(s)$ denotes the smallest eigenvalue of $s$.
By \cite[Proposition 18]{saksman-vihola}, there is a compact set
$C_{\kappa}\subset\R^d$, a probability measure $\nu_\kappa$ on $C_\kappa$, 
and $b_\kappa<\infty$ such that for all $s\in\mathcal{C}^d$ with
$\lambda_{\min}(s)\ge \kappa$, it holds that
\begin{align}
    P_{\tilde{q}_s} V(x) &\le \lambda_s V(x)  + b \charfun{C_\kappa}(x), 
    & \forall x\in\R^d \label{eq:p-drift} \\
    P_{\tilde{q}_s}(x,A) &\ge \delta_s \nu(A) & \forall x\in 
    C_\kappa\label{eq:p-mino} 
\end{align}
where
$V(x)\defeq (\sup_x \pi(x))^{1/2} \pi^{-1/2}(x)\ge 1$ and the constants
$\lambda_s,\delta_s\in(0,1)$ satisfy the bound
\begin{equation}
   (1-\lambda_s)^{-1} \vee \delta_s^{-1} \le c_1 \det(s)^{1/2}
   \label{eq:const-estim}
\end{equation}
for some constant $c_1\ge 1$.
Likewise, there is a compact $D_f\subset\R^d$, a probability measure $\mu_f$
on $D_f$, and constants $b_f<\infty$ and
$\lambda_f,\delta_f\in(0,1)$, so that \eqref{eq:p-drift} and \eqref{eq:p-mino}
hold with $P_f$ \cite[Theorem 4.3]{jarner-hansen}.
Put together, \eqref{eq:p-drift} and \eqref{eq:p-mino} hold for $P_{q_s}$
for all $s\in\mathcal{C}^d$ with $\lambda_{\min}(s)\ge \kappa$, perhaps with different
constants, but
satisfying a bound \eqref{eq:const-estim}, with another $c_2\ge c_1$.

The rest of the proof follows as in Theorem \ref{thm:laplace-ergodicity}
by construction of an auxiliary process
$(\tilde{X}_n,\tilde{M}_n,\tilde{S}_n)_{n\ge 1}$ truncated so that 
for given $\varepsilon>0$,
$\kappa\le \lambda_{\min}(\tilde{S}_n)\le a n^\varepsilon$ and $|\tilde{M}_n| \le
a n^\varepsilon$ and where the constant $a=a(\varepsilon,\kappa)$
is chosen so that the truncated process 
coincides with the original AM process
with probability $\ge 1-2\epsilon$. Theorem 2 of \cite{saksman-vihola}
ensures that the strong law of large numbers holds for the constrained
process, and letting $\epsilon\to 0$ implies the claim.
\end{proof} 
\begin{remark} 
    In the case $\eta_n \defeq n^{-1}$, Theorem \ref{th:ergodicity}
    implies that with probability one, $M_n \to m_\pi \defeq \int x \pi(x) \ud x$ and 
    $S_n \to s_\pi \defeq \int xx^T \pi(x) \ud x - m_\pi m_\pi^T$, the
    true mean and covariance of $\pi$, respectively.
\end{remark}
\begin{remark} 
    Theorem \ref{th:ergodicity} holds also when using
    multivariate Student distributions $\{\tilde{q}_s\}_{s\in\mathcal{C}^d}$, as 
    \cite[Proposition
    26]{vihola-asm} extends \cite[Proposition 18]{saksman-vihola} to cover
    this case.
\end{remark} 


\section*{Acknowledgements} 

The author thanks Professor Eero Saksman for discussions and 
helpful comments on the manuscript.



\appendix

\section{The Kolmogorov-Rogozin Inequality}
\label{sec:kolmogorov-rogozin} 

Define the concentration function $Q(X;\lambda)$ of a random variable $X$
by
\[
    Q(X;\lambda) \defeq \sup_{x\in\R} \P(X\in [x, x + \lambda])
\]
for all $\lambda\ge 0$.
\begin{theorem} 
    \label{th:kolmogorov-rogozin}
    Let $X_1,X_2,\ldots$ be mutually independent random variables.
    There is a universal constant $c>0$ such that
    \[
        Q\left(\sum_{k=1}^n X_k; L\right)
        \le \frac{c L}{\lambda} \left(
          \sum_{k=1}^n \big(1-Q(X_k;\lambda)\big)
        \right)^{-1/2}
    \]
    for all $L\ge \lambda > 0$.
\end{theorem}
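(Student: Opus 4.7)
The plan is to follow the classical Esseen--Rogozin argument, combining a characteristic-function bound for the concentration function with a lower estimate of $1-|\varphi_k(t)|^2$ in terms of $1-Q(X_k;\lambda)$. Throughout, let $\varphi_k$ denote the characteristic function of $X_k$ and $S_n\defeq\sum_{k=1}^n X_k$, so that $\varphi_{S_n}=\prod_{k=1}^n \varphi_k$.

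First, I would invoke Esseen's inequality for the concentration function, namely that there is a universal constant $A>0$ such that
\[
    Q(S_n; L) \le A L \int_{-1/L}^{1/L} |\varphi_{S_n}(t)|\,\ud t
\]
for every $L>0$. Using $|x|\le \exp(-(1-|x|^2)/2)$ valid for $|x|\le 1$, this gives the pointwise estimate
\[
    |\varphi_{S_n}(t)| \le \exp\left(-\tfrac{1}{2}\sum_{k=1}^n\bigl(1-|\varphi_k(t)|^2\bigr)\right).
\]

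Second, and this is the main analytic step, I would prove a symmetrization-based bound of the form
\[
    1 - |\varphi_k(t)|^2 \ge c_0\, t^2\lambda^2\,\bigl(1 - Q(X_k;\lambda)\bigr)
    \qquad\text{whenever } |t|\le 1/\lambda,
\]
with an absolute constant $c_0>0$. The point is that $|\varphi_k(t)|^2$ is the characteristic function of the symmetrization $\tilde X_k\defeq X_k-X_k'$ (with $X_k'$ an independent copy), so $1-|\varphi_k(t)|^2 = \Eop{1-\cos(t\tilde X_k)}$. One then splits according to $|\tilde X_k|\le \lambda$ or not, uses $1-\cos(u)\ge c_0 u^2$ for $|u|\le 1$ on the complement together with the elementary inequality $\P(|\tilde X_k|>\lambda)\ge c_1(1-Q(X_k;\lambda))$ that comes from the observation that if the law of $X_k$ has concentration at most $q$ on any interval of length $\lambda$, then $X_k-X_k'$ must frequently exceed $\lambda$ in absolute value. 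This step is the main obstacle because it is the only place where the definition of $Q(X_k;\lambda)$ enters in an essential way; the rest is routine.

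Finally, substituting the lower bound into the exponential and the resulting estimate into Esseen's inequality yields
\[
    Q(S_n;L) \le A L \int_{-1/L}^{1/L}
      \exp\left(-\tfrac{c_0 \lambda^2 t^2}{2} \sigma_n^2\right)\ud t
    \le \frac{c' L}{\lambda \sigma_n},
\]
where $\sigma_n^2\defeq \sum_{k=1}^n(1-Q(X_k;\lambda))$ and the last step is the standard Gaussian integral bound $\int_{\R} e^{-a t^2/2}\ud t = \sqrt{2\pi/a}$, whose application is legitimate because the integrand is largest at $t=0$ and the restriction $|t|\le 1/\lambda$ is precisely the range in which the quadratic lower bound was established (note $L\ge \lambda$, so $1/L\le 1/\lambda$). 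Taking $c=c'$ completes the proof.
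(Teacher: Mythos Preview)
The paper itself does not give a proof: it simply records that the result is due to Rogozin (via a combinatorial argument) and to Esseen (via characteristic functions), and your sketch follows the second of these. The overall architecture---Esseen's smoothing inequality, the bound $|\varphi_{S_n}(t)|\le\exp\bigl(-\tfrac12\sum_k(1-|\varphi_k(t)|^2)\bigr)$, and a Gaussian integral at the end---is indeed how the characteristic-function proof is organised, so in spirit you are reproducing exactly what the paper defers to.

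There is, however, a genuine gap in your second step. The pointwise inequality
\[
    1 - |\varphi_k(t)|^2 \;\ge\; c_0\, t^2\lambda^2\,\bigl(1 - Q(X_k;\lambda)\bigr),
    \qquad |t|\le 1/\lambda,
\]
is false. Let $X_k$ put mass $1/2$ at each of $0$ and $2\pi\lambda$; then $Q(X_k;\lambda)=1/2$, while the symmetrization $\tilde X_k$ takes values in $\{-2\pi\lambda,0,2\pi\lambda\}$ and $|\varphi_k(1/\lambda)|^2=\tfrac12+\tfrac12\cos(2\pi)=1$, so the left side vanishes at $t=1/\lambda$ and the right side is positive. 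The difficulty is exactly where you place it: the elementary bound $1-\cos u\ge c_0 u^2$ is only valid on a bounded $u$-interval, hence cannot be used on the event $\{|\tilde X_k|>\lambda\}$, yet that event is the one carrying the probability $\P(|\tilde X_k|>\lambda)\ge c_1(1-Q(X_k;\lambda))$ you wish to exploit. Esseen's actual argument in \cite{esseen-kolmogorov-rogozin} does not rely on such a pointwise estimate; it handles the contribution of large $|\tilde X_k|$ through an average over $t$ (so that the oscillations of $1-\cos(t\tilde X_k)$ wash out), and this integrated lower bound is what ultimately feeds into the Gaussian-integral step. Your outline is salvageable, but this particular lemma needs to be replaced by the averaged version before the final estimate goes through.
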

\begin{proof} 
    Rogozin's original work \cite{rogozin-concentration} uses
    combinatorial results, and Esseen's alternative proof
    \cite{esseen-kolmogorov-rogozin} is based on characteristic functions.
\end{proof} 


\section{A Coupling Construction}
\label{sec:coupling} 

\begin{theorem}
    \label{thm:coupling} 
    Suppose $\mu$ and $\nu$ are probability measures and the random
    variable $X\sim \mu$.
    Then, possibly by augmenting the probability space, 
    there is another random variable $Y$ such that $Y\sim \nu$
    and $\P(X = Y) = 1 - \|\mu - \nu\|$.
\end{theorem}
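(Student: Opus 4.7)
The plan is to give the classical maximal coupling construction via the Hahn--Jordan decomposition. First I would introduce the common part $\lambda \defeq \mu \wedge \nu$, defined by writing the signed measure $\mu - \nu = (\mu-\nu)^+ - (\mu-\nu)^-$ in its Jordan decomposition and setting $\lambda \defeq \mu - (\mu-\nu)^+ = \nu - (\mu-\nu)^-$. This is a positive measure of total mass $1 - \|\mu-\nu\|$ (using the convention $\|\mu-\nu\| = \sup_A|\mu(A)-\nu(A)|$ that matches the way the theorem is invoked in Section~\ref{sec:example}), and the residuals $\mu_s \defeq \mu - \lambda$ and $\nu_s \defeq \nu - \lambda$ are mutually singular and both of mass $\|\mu-\nu\|$. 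Since $\lambda \le \mu$, the Radon--Nikodym derivative $f \defeq \ud\lambda/\ud\mu$ exists and satisfies $0 \le f \le 1$ $\mu$-a.s.

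Next, I would augment the probability space by two independent variables: a uniform $U \sim \mathrm{Unif}[0,1]$ and a variable $Z$ with distribution $\nu_s/\|\mu-\nu\|$ (or $Z$ arbitrary if $\mu=\nu$), both independent of $X$. Then define
\[
    Y \defeq X\,\charfun{\{U \le f(X)\}} + Z\,\charfun{\{U > f(X)\}}.
\]
For any measurable set $A$, conditioning on $X$ and using independence,
\[
    \P(Y \in A) = \int_A f(x)\,\ud\mu(x) + \left(1-\int f\,\ud\mu\right)\frac{\nu_s(A)}{\|\mu-\nu\|} = \lambda(A) + \nu_s(A) = \nu(A),
\]
so $Y \sim \nu$ as required. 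Moreover, by construction,
\[
    \P(X = Y) \ge \P(U \le f(X)) = \int f\,\ud\mu = \lambda(\Omega) = 1 - \|\mu-\nu\|.
\]

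The reverse inequality is the standard coupling lower bound: for any joint distribution of $(X,Y)$ with marginals $\mu,\nu$ and any measurable $A$, $\P(X \ne Y) \ge |\P(X\in A)-\P(Y\in A)| = |\mu(A)-\nu(A)|$, and taking the supremum over $A$ gives $\P(X = Y) \le 1 - \|\mu-\nu\|$. Combined with the construction, this yields the desired equality. There is no real obstacle here; the only point requiring care is the existence of the Jordan decomposition and the Radon--Nikodym derivative, both of which are standard for finite signed measures on a measurable space.
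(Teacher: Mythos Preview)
Your proof is correct and is essentially the same maximal-coupling construction as the paper's: the paper works with densities $g=\ud\mu/\ud\rho$ and $h=\ud\nu/\ud\rho$ for $\rho=\mu+\nu$ and sets $r=\min\{1,h/g\}$, which is exactly your $f=\ud(\mu\wedge\nu)/\ud\mu$, and its residual measure $\xi$ is your normalised $\nu_s$. The only cosmetic difference is that the paper obtains $\P(X=Y)=1-\|\mu-\nu\|$ directly by noting that on $\{U>r(X)\}$ the laws of $X$ and $Z$ are mutually singular, whereas you get the upper bound via the coupling inequality; both are fine.
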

\begin{proof}[Proof (adopted from Theorem 3 in
    \citep{roberts-rosenthal-general}).] 
    Define the measure $\rho \defeq \mu + \nu$, and the densities $g
    \defeq \ud \mu / \ud \rho$ and $h \defeq \ud \nu/\ud \rho$,
    existing by the Radon-Nikodym theorem.
    Let us introduce
    two auxiliary variables $U$ and $Z$
    independent of each other and $X$, whose existence is ensured by
    possible augmentation of the probability space.
    Then, $Y$ is defined through
    \[
        Y = \charfun{\{U\le r(X)\}}
        X + \charfun{\{U> r(X)\}} Z
    \]
    where the `coupling probability' $r$ is defined as 
    $r(y) \defeq \min\{1,h(y)/g(y)\}$ whenever
    $g(y)>0$ and $r(y)\defeq 1$ otherwise.
    The variable $U$ is uniformly distributed on $[0,1]$.
    If $r(y) = 1$ for
    $\rho$-almost every $y$, then the choice of $Z$ is irrelevant,
    $\mu = \nu$, and the claim is trivial.
    Otherwise,
    the variable $Z$ is distributed following
    the `residual measure'
    $\xi$ given as
    \[
        \xi(A) \defeq
        \frac{\int_A
          \max\{0,h-g\} \ud \rho
          }{\int
          \max\{0,h-g\} \ud \rho}.
    \]
    Observe that $\int
    \max\{0,h-g\} \ud \rho =\int
    \max\{0,g-h\} \ud \rho > 0$ in this case, so $\xi$ is a
    well defined probability measure.
    
    Let us check that $Y\sim \nu$,
    \[
        \begin{split}
        \P(Y\in A)
        &= \int_A r \ud \mu
        + \xi(A) \int (1-r) \ud \mu\\
        &= \int_A \min\{g,h\} \ud\rho 
        + \xi(A) \int_{h<g} (g-h) \ud\rho \\
        &= \int_A \min\{g,h\} + \max\{0,h-g\}\rho(\ud x) 
        = \nu(A).
        \end{split}
    \]
    Moreover, by observing that $r(y)=1$ in the support of $\xi$, one has
    \begin{equation*}
        \P(X = Y) = \int r \ud\mu = \int \min\{g,h\}
        \ud\rho 
        = 1 - \int_{g<h} (h - g) \ud\rho
        = 1 - \| \nu - \mu \|
    \end{equation*}
    since $\int_{g<h} (h - g) \ud\rho = \int_{h<g} (g - h)
    \ud\rho = \sup_{f} \left|\int f (h - g) \ud\rho\right|
    = \| \mu - \nu \|$
    where the supremum taken over all measurable functions $f$
    taking values in $[0,1]$.
\end{proof}


\section{Proof of Lemma \ref{lemma:exp-geombound}}
\label{sec:proof-exp-geombound} 
Observe that without loss of generality it is sufficient to check 
the case $m=0$ and $b=1$, that is, 
consider the standard Laplace distribution $\pi(x) \defeq \frac{1}{2}e^{-|x|}$.

Let $x>0$ and start by writing
\begin{equation}
1 - \frac{P_s V(x)}{V(x)}
= 
    \int_{-x}^x a(x,y)
    \tilde{q}_s(y-x) \ud y
  - \int_{|y|>x} b(x,y)
    \tilde{q}_s(y-x) \ud y
    \label{eq:drift-estim}
\end{equation}
where 
\begin{eqnarray*}
    a(x,y) &\defeq& \left( 1 - \sqrt{\frac{\pi(x)}{\pi(y)}}\right)
    = 1 - e^{-\frac{x-|y|}{2}} \qquad\text{and}\\
    b(x,y) &\defeq& \sqrt{\frac{\pi(y)}{\pi(x)}} 
    \left(1-\sqrt{\frac{\pi(y)}{\pi(x)}}\right)
    = e^{-\frac{|y|-x}{2}}
    \left(1-e^{-\frac{|y|-x}{2}}\right).
\end{eqnarray*}
Compute then that
\[
    \int_0^x a(x,y) \tilde{q}_s(y-x) \ud y - \int_x^{2x} b(x,y) \tilde{q}_s(y-x)\ud y
    = \int_0^x \big( 1 - e^{-\frac{z}{2}}\big)^2 \tilde{q}_s(z)\ud z .
\]
The estimates
\begin{equation*}
    \begin{split}
    \int_{-x}^0 a(x,y) \tilde{q}_s(y-x) \ud y
    &\ge \tilde{q}_s(2x)\int_0^x a(x,y) \ud y 
    = \tilde{q}_s(2x)\int_0^x (1-e^{-\frac{z}{2}}) \ud z \\
    \int_{-\infty}^{-x} b(x,y) \tilde{q}_s(y-x) \ud y
    & \le \tilde{q}_s(2x) \int_{x}^{\infty} b(x,y) \ud y
    = \tilde{q}_s(2x) \int_{0}^{\infty} e^{-\frac{z}{2}}(1-e^{\frac{z}{2}})
    \ud z
    \end{split}
\end{equation*}
due to the non-increasing property of $\tilde{q}_s$ yield
\begin{multline*}
    \int_{-x}^0 a(x,y) \tilde{q}_s(y-x) \ud y
    -\int_{-\infty}^{-x} b(x,y) \tilde{q}_s(y-x) \ud y\\
    \ge \tilde{q}_s(2x)
    \left[
        \int_0^x (1-e^{-\frac{z}{2}})^2 \ud z
        - \int_x^\infty e^{-\frac{z}{2}} \ud z
    \right]>0
\end{multline*}
for any sufficiently large $x> 0$. Similarly, one obtains
\[
    \frac{1}{2}\int_0^x \big( 1 - e^{-\frac{z}{2}}\big)^2 \tilde{q}_s(z)\ud z
    - \int_{2x}^\infty b(x,y) q_s(y-x) \ud y > 0
\]
for large enough $x>0$.

Summing up, letting $M>0$ be sufficiently large, then for 
$x\ge M$ and $s\ge L>0$
\begin{equation*}
    \begin{split}
1 - \frac{P_s V(x)}{V(x)}
&\ge \frac{1}{2}\int_0^x \big( 1 - e^{-\frac{z}{2}}\big)^2 \tilde{q}_s(z)\ud z
\ge \frac{1}{2}\tilde{q}_s(M)\int_0^M \big( 1 - e^{-\frac{z}{2}}\big)^2 \ud
z \\
&\ge c_1 s^{-1/2} \tilde{q}(\theta^{-1/2} s^{-1/2} M) \ge c_2 s^{-1/2}
\end{split}
\end{equation*}
for some constants $c_1,c_2>0$.
The same inequality holds also for $-x\le -M$ due to symmetry.
The simple bound $P_s V(x) \le 2 V(x)$ observed from 
\eqref{eq:drift-estim} with the above estimate establishes
\eqref{eq:drift-ineq}. The minorisation inequality
\eqref{eq:mino-ineq} holds since for all $x\in C$ one may write
\begin{equation*}
    \begin{split}
    P_s(x,\set{A}) 
    &\ge \int_{A\cap C} 
    \max\left\{1,\frac{\pi(y)}{\pi(x)}\right\} \tilde{q}_s(y-x) \ud y  \\
    &\ge 
    \frac{\inf_{z\in\set{C}}\pi(z)}{\sup_z\pi(z)}
    \inf_{s\ge L,\, z,y\in C} \tilde{q}_s(z-y)
    \int_{A\cap C}
    \ud y \ge c_3 s^{-1/2} \nu(A).
\end{split}
\end{equation*}
where $\nu(A) \defeq |A\cap C|/|C|$ with $|\cdot|$ denoting the Lebesgue
measure. \qed

\end{document}